\newdimen\AAdi%
\newbox\AAbo%
\def\AAk#1#2{\s_etbox\AAbo=\hbox{#2}\AAdi=\wd\AAbo\kern#1\AAdi{}}%
\def\AAr#1#2#3{\s_etbox\AAbo=\hbox{#2}\AAdi=\ht\AAbo\raise#1\AAdi\hbox{#3}}%
\font\tenmsb=msbm10 at 12pt \font\sevenmsb=msbm7 at 8pt
\font\fivemsb=msbm5 at 6pt
\def\Bbb#1{{\tenmsb\fam\msbfam#1}}
\newtheorem{thm}{Theorem}[section]
\newtheorem{lem}[thm]{Lemma}
\newtheorem{cor}[thm]{Corollary}
\newtheorem{pro}[thm]{Proposition}
\newtheorem{defi}[thm]{Definition}
\newcommand{\Section}[2]{\setcounter{equation}{0}
\allowdisplaybreaks
\section[#1]{#2}}
\def\pr {\noindent {\it Proof.} }
\def\n{\nabla}
\def\f#1#2{\frac{#1}{#2}}
\def\mc#1{\mathcal{#1}}
\def\pr{\frac {\partial}{\partial r}}
\def\td{\tilde}
\def\a{\alpha}
\def\p#1{\partial #1}
\def\de{\delta}
\def\De{\Delta}
\def\ep{\varepsilon}
\def\g{\gamma}
\def\k{\kappa}
\def\la{\lambda}
\def\La{\Lambda}
\def\om{\omega}
\def\Om{\Omega}
\def\th{\theta}
\def\Th{\Theta}
\def\w{\wedge}
\def\Hess{\mbox{Hess}}
\def\R{\Bbb{R}}
\def\lan{\langle}
\def\ran{\rangle}
\def\ra{\rightarrow}
\def\mb{\mathbf}
\def\Hess{\text{Hess }}
\subjclass[2010]{58E20,~53A10,~53C42.}
\begin{document}
\title
[On Lawson-Osserman Constructions] {On Lawson-Osserman Constructions}

\author
[Xiaowei Xu, Ling Yang and Yongsheng Zhang]{Xiaowei Xu, Ling Yang and Yongsheng Zhang}

\address{School of Mathematical Sciences,  University of Science and Technology of China, Hefei, 230026, Anhui
province, China;}
\address{and Wu Wen-Tsun Key Laboratory of Mathematics, USTC, Chinese Academy of
Sciences, Hefei, 230026, Anhui province, China.}
\email{xwxu09@ustc.edu.cn}

\address{School of Mathematical Sciences, Fudan University,
Shanghai 200433, China.} \email{yanglingfd@fudan.edu.cn}

\address{School of Mathematics and Statistics, Northeast Normal University, Changchun 130024, Jilin province, China.}
\email{yongsheng.chang@gmail.com}

\thanks{ }

\begin{abstract}
Lawson-Osserman \cite{l-o} constructed three types of non-parametric minimal cones of high codimensions based on Hopf maps between
 spheres, which correspond to Lipschitz but non-$C^1$ solutions to the minimal surface equations,
 thereby making
sharp contrast to the regularity theorem
 for minimal graphs of codimension 1. In this paper, we develop the constructions
in a more general scheme. Once a mapping $f$ between unit spheres is composited of a harmonic Riemannian
submersion and a homothetic (i.e., up to a constant factor, isometric) minimal immersion, certain twisted
graph of $f$ can yield a non-parametric minimal cone. Because the choices of the second component usually form a huge moduli space,
our constructions produce a constellation of uncountably many examples. For each such cone, there exists an entire minimal graph whose tangent cone at infinity
is just the given one. Moreover, new phenomena on the existence, non-uniqueness and non-minimizing of solutions to the related Dirichlet problem are discovered.

\end{abstract}
\maketitle

\tableofcontents

\renewcommand{\proofname}{\it Proof.}


\Section{Introduction}{Introduction}

The research on minimal graphs in Euclidean spaces has a long and fertile history.
Among others, the \textbf{Dirichlet problem} (cf. \cite{j-s,b-d-m, de, m1,l-o}) is a central topic in this subject:

\textit{Let $\Omega\subset \R^{d_1}$ be a bounded and strictly convex domain with boundary of class $C^r$ for $r\geq 2$.
        It asks, for a given function $f:\partial \Omega \rightarrow \mathbb R^{d_2}$ of class $C^s$ with $0\leq s\leq r$,
         what kind of and how many functions $\in C^0(\overline{\Omega};\R^{d_2})\bigcap \text{Lip}(\Omega;\R^{d_2})$
          exist
        so that each such $F$ satisfies the minimal surface equations in the weak sense (or equivalently, the graph of $F$ is minimal in the sense
        of \cite{al}) and $F|_{\partial \Omega}=f$.}

When $d_2=1$, we have a fairly profound understanding.

\begin{itemize}
\item Given arbitrary boundary data of class $C^0$,
                    by the works of J. Douglas \cite{d},
                    T. Rad\'o \cite{r,r2},
                    Jenkins-Serrin \cite{j-s}
                    and Bombieri-de Giorgi-Maranda \cite{b-d-m},
                    there exists a unique Lipschitz solution to the Dirichlet problem.
\item Furthermore, due to the works of E. de Giorgi \cite{de} and J. Moser \cite{m1}, this solution turns out to be analytic.
\item Each solution gives an absolutely area-minimizing graph
          by virtue of the convexity of $\Omega\times \mathbb R$ and \S5.4.18 of
          \cite{fe}.
          As a consequence, it is stable.
\end{itemize}

Utterly unlike the above, the situation for $d_2\geq 2$ becomes much more complicated.
          Even when $\Om=\mathbb D^{d_1}$ (the unit Euclidean disk),
          H. B. Lawson and R. Osserman \cite{l-o}
          discovered astonishing phenomena that reveal essential differences.

\begin{itemize}
          \item For $d_1=d_2=2$, some real analytic boundary data can be constructed
                   so that
                   there exist at least three different analytic solutions to the Dirichlet problem.
                   Moreover,
                   one of them corresponds to an unstable minimal surface.
          \item For $d_1\geq 4$ and $d_1-1\geq d_2\geq 3$, the Dirichlet problem is generally not solvable.
                   In fact, for each $f:S^{d_1-1}\ra S^{d_2-1}$ that is not homotopic to zero,
                   there exists a positive constant $c$ depending only on $f$, such that
                   the problem is unsolvable for the boundary data $f_\varphi:=\varphi\cdot f$, where
                   $\varphi$ is a constant no less than $c$.
          \item For certain boundary data, there exists a Lipschitz solution to the Dirichlet problem which is not $C^1$.
\end{itemize}

As shown in \cite{l-o}, the nonexistence and irregularity of the Dirichlet problem are intimately related as follows.
Given $f$ that represents a non-trivial element of $\pi_{d_1-1}(S^{d_2-1})$, the Dirichlet problem for $f_\varphi$ is solvable when $\varphi$ is small (due to the implicit function theorem)
but unsolvable for large $\varphi$. This leads Lawson-Osserman to suspect there exists a critical value $\varphi_0$
which supports some sort of singular solution.
In particular, for the Hopf map $H^{2m-1,m}:S^{2m-1}\rightarrow S^m$ with $m=2,4$ or $8$,
\begin{equation}
M_m:=\{ (\cos\th_m\cdot x,\sin\th_m\cdot H^{2m-1,m}(x)):x\in S^{2m-1}\}\subset S^{3m}
\end{equation}
with
\begin{equation}
\th_m:=\arccos\sqrt{\f{4(m-1)}{3(2m-1)}}
\end{equation}
is the principal orbit of maximal volume under certain group action,
and hence presents
 a minimal sphere (cf. W.Y. Hsiang \cite{hs}).
Then the minimal cone $C_m$ over $M_m$ is the graph of 
\begin{equation}F_{m}(y)=\left\{\begin{array}{cc}
\tan\th_m\cdot |y|\cdot H^{2m-1,m}(\f{y}{|y|}) & y\neq 0,\\
0 & y=0.
\end{array}\right.
\end{equation}
Hence, its restriction over $\Bbb{D}^{2m}$ gives a Lipschitz solution to the Dirichlet problem for boundary data $\tan\th_m\cdot H^{2m-1,m}$.

To develop constructions akin to Lawson-Osserman's in a more general framework,
we introduce the following concepts.

\begin{defi}
For a smooth map $f:S^n\ra S^m$,
if there exists an acute angle $\th$, such that
\begin{equation}
M_{f,\th}:=\{(\cos\th\cdot x,\sin\th\cdot f(x)):x\in S^n\}
\end{equation}
is a minimal submanifold of $S^{n+m+1}$, then we call $f$ a {\bf Lawson-Osserman map (LOM)},
$M_{f,\th}$ the associated {\bf Lawson-Osserman sphere (LOS)}, and the cone $C_{f,\th}$ over $M_{f,\th}$ the corresponding {\bf Lawson-Osserman cone (LOC)}.
\end{defi}

Similarly, for an LOM $f$, the associated $C_{f,\th}$ is the graph of
\begin{equation}\label{cone-graph}
F_{f,\th}(y)=\left\{\begin{array}{cc}
\tan\th \cdot |y|\cdot f(\f{y}{|y|}) & y\neq 0,\\
0 & y=0.
\end{array}\right.
\end{equation}
Thus the restriction over $\Bbb{D}^{n+1}$ provides a Lipschitz solution to the Dirichlet problem
for the boundary data $ f_{\varphi_0}:=\varphi_0\cdot f$ with $\varphi_0:=\tan\th$.

 Assume $f:S^n\ra S^m$ is an LOM that is not a totally geodesic isometric embedding. Then $f$ is called an {\bf LOMSE} if the nonzero singular values
of $(f_*)_x$ are equal for each $x\in S^n$. As $x$ varies, these values give a continuous function $\la(x)$.
One can deduce that $\la(x)$ equals
 a constant $\la$ and that $f$ has constant rank $p$ (see Theorem \ref{g2} (ii)). Moreover, all components of this vector-valued function $f$, i.e. $f_1,\cdots,f_{m+1}$ are harmonic spherical functions of degree $k\geq 2$ (see Theorem \ref{npk}). Accordingly, we call such $f$ an {\bf LOMSE of (n,p,k)-type}. It is worth noting that, the Hopf map from $S^{2m-1}$ onto $S^m$ is an
LOMSE of $(2m-1,2m,2)$-type, for $m=2, 4 , 8$. Hence the LOMSEs and corresponding LOSs, LOCs are natural
generalizations of Lawson-Osserman's original constructions.

In this paper, we shall study LOMSEs systematically from several viewpoints.

A characterization of LOMSEs will be established in Theorem \ref{g2}, which asserts that
each of them can be written as the composition of a Riemannian submersion from $S^n$ with connected fibers and a homothetic minimal immersion into $S^m$.
In fact, the submersion, which determines $(n,p)$, has to be a Hopf fibration over a complex projective space, a quaterninonic projective space
or the octonionic projective line, according to the wonderful result in \cite{w};
 while the choices of the second component
 for each even integer $k$ usually form
a moduli space of large dimension (see \cite{c-w,oh,u,to,to2}), yielding a huge number of LOMSEs as well as the associated LOSs and LOCs. Note that except for the three original
Lawson-Osserman cones,
we always have $m>n$.
Therefore, `$f$ is not homotopic to zero' is not a
                            requisite to span a non-parametric minimal cone.

Although there exist uncountably many LOMSEs, for each of them
both the nonzero singular value $\la$ and the acute angle $\th$
for the associated LOS are constants depending on $(n,p,k)$ in a discrete manner (see Theorem \ref{npk}).
Consequently, we gain interesting gap phenomena for certain geometric quantities of LOSs or LOCs associated to LOMSEs, e.g. angles between normal planes and a fixed reference plane, volumes, Jordan angles and slope functions,
see Corollary \ref{cor2}.
 We remark that rigidity properties for
these quantities of compact minimal submanifolds in spheres or entire minimal graphs in Euclidean spaces have drawn attention in
many literatures \cite{b,fc,j-x,j-x-y3,j-x-y2,j-x-y4,c-l-y,p-w,j-x-y5}.

Motivated by the argument of Lawson-Osserman \cite{l-o}, we seek for analytic solutions to Dirichlet problem
for the boundary data $f_\varphi:=\varphi\cdot f$ as well.
 A good candidate (compared with \eqref{cone-graph}) turns out to be
 \begin{equation}
 F_{f,\rho}(y)=\left\{\begin{array}{cc}
\rho(|y|)f(\f{y}{|y|}) & y\neq 0\\
0 & y=0
\end{array}\right.
\end{equation}
Here $\rho$ is a smooth positive function on $(0,b)$ for some $b\in\R_+\cup \{+\infty\}$,
satisfying $\lim\limits_{r\ra 0^+}\rho=0$.
If
\begin{equation}\label{frho1}
M_{f,\rho}:=\big\{(rx,\rho(r)f(x)):x\in S^n, r\in (0,b)\big\}
\end{equation}
 is a minimal submanifold and $\rho_r(0)=0$,
 then Morrey's regularity theorem \cite{mo}
 ensures $F_{f,\rho}$ an analytic solution to the minimal surface equations through the origin.
 Since the minimality
 is invariant under rescaling,
 $F_{f,\rho_d}$
 for
  $\rho_d(r):=\f{1}{d}\,\rho(d\cdot r)$
  and $d>0$
  produce a series of minimal graphs.
 Therefore, in the $r\rho$-plane,
 every intersection point of the graph of $\rho$ and the straight line $\rho=\varphi\cdot r$
 corresponds to an analytic solution to the Dirichlet problem for $f_\varphi$.

 In particular, when $f$ is an LOMSE, the minimal surface equations can be reduced to (\ref{ODE1}),
 a nonlinear ordinary differential equation of second order,
 equivalent to an autonomous system (\ref{ODE2}) in the $\varphi\psi$-plane
 for $\varphi:=\f{\rho}{r}$, $t:=\log r$ and $\psi:=\varphi_t$.
 With the aid of suitable barrier functions, we obtain a long-time existing bounded solution, whose orbit in the phase space
 emits from the origin - a saddle critical point and limits to $P_1(\varphi_0,0)$ - a stable critical point (see Propositions \ref{case1}-\ref{case2}).

Quite subtly, there are two dramatically different types of asymptotic behaviors
aroud $P_1$ depending on the values $(n,p,k)$ of $f$:
\begin{enumerate}
\item [(I)]
$P_1$ is a stable center when $(n,p,k)=(3,2,2), (5,4,2), (5,4,4)$ or $n\geq 7$;
\item [(II)] $P_1$ is a stable spiral point when $(n,p)=(3,2)$,
$k\geq 4$ or $(n,p)=(5,4)$, $k\geq 6$.
\end{enumerate}
                   \begin{figure}[h]
                              \begin{minipage}[c]{0.45\textwidth}
                              \includegraphics[scale=0.52]{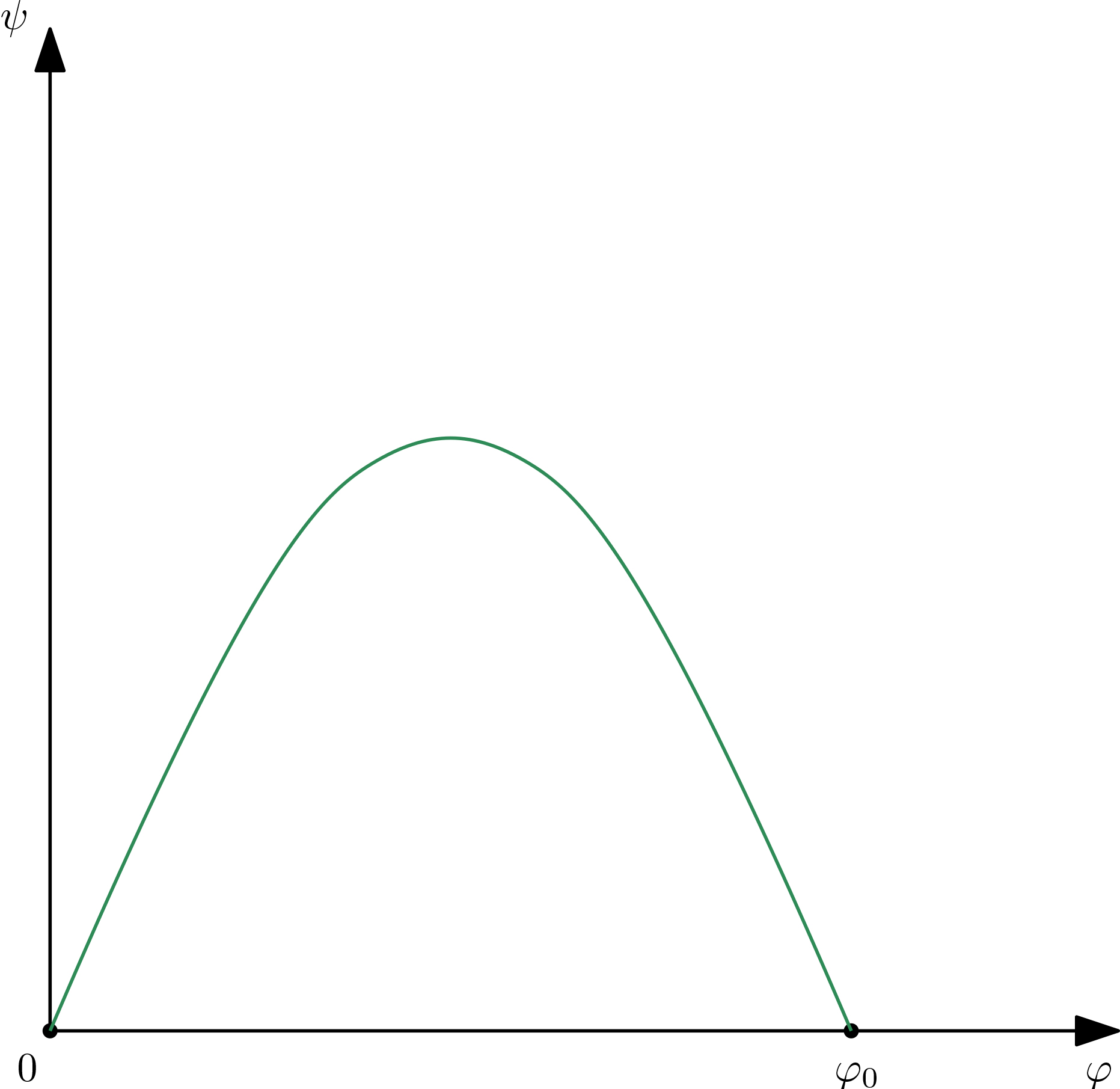}
                              \end{minipage}%
                          \begin{minipage}[c]{0.5\textwidth}
                           \includegraphics[scale=0.55]{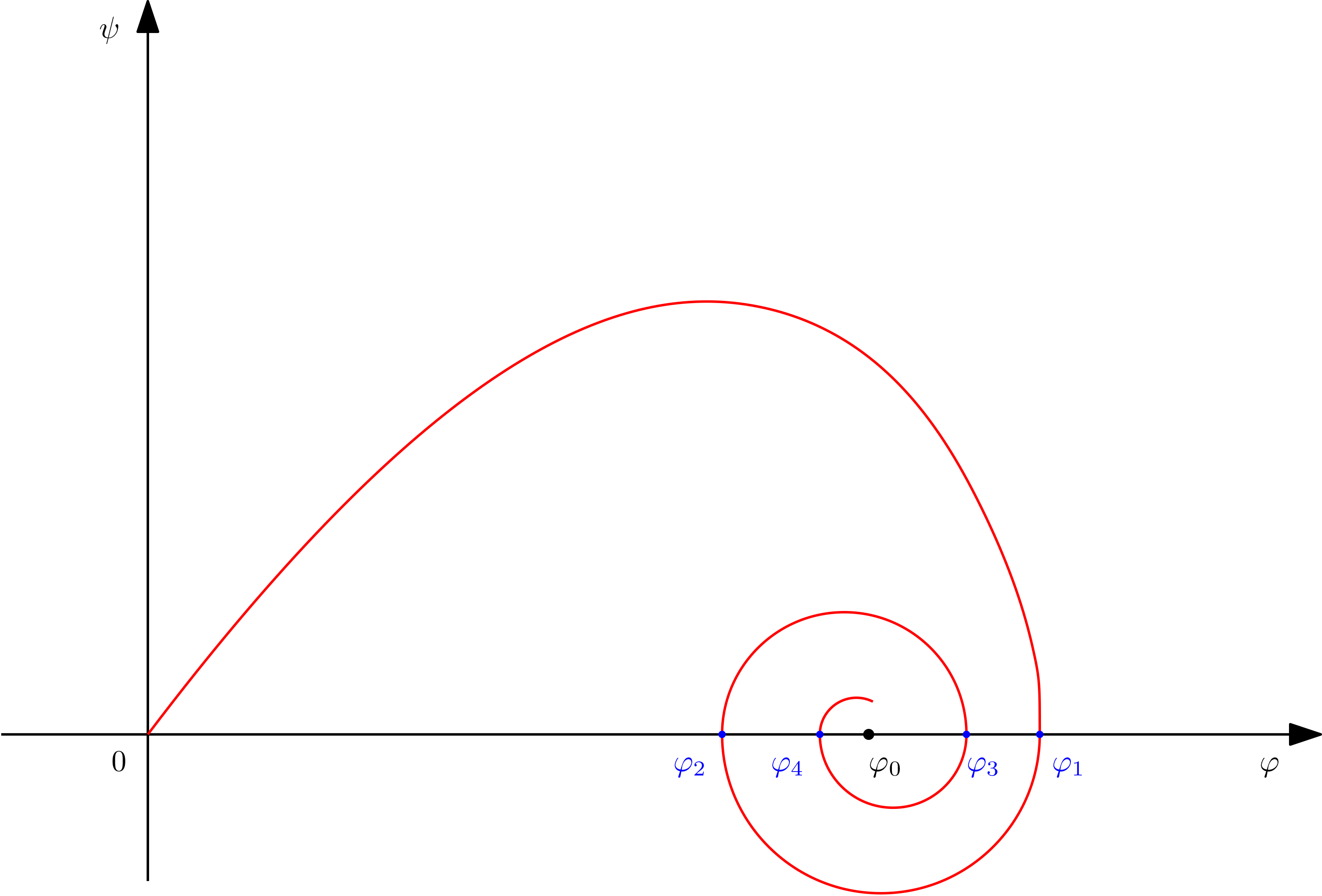}
                           \end{minipage}
                    \end{figure}
 As a consequence, the graphs of the solutions $\rho$ to (\ref{ODE1}) are illustrated below, respectively for LOMSEs of Type (I) and Type (II).
                              $$\begin{minipage}[c]{0.5\textwidth}
                              \includegraphics[scale=0.53]{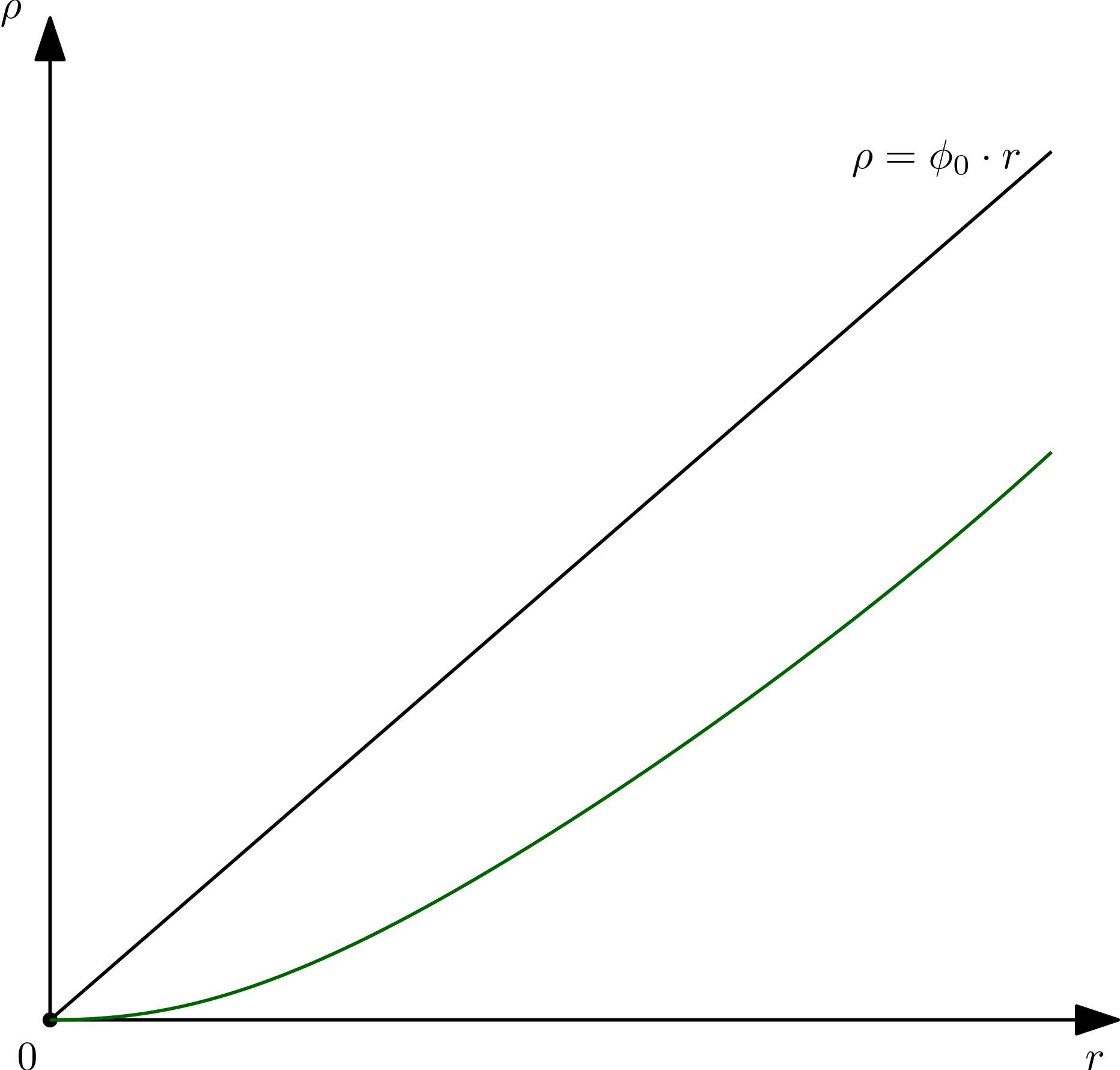}
                              \end{minipage}%
                          \begin{minipage}[c]{0.45\textwidth}
                           \includegraphics[scale=0.5]{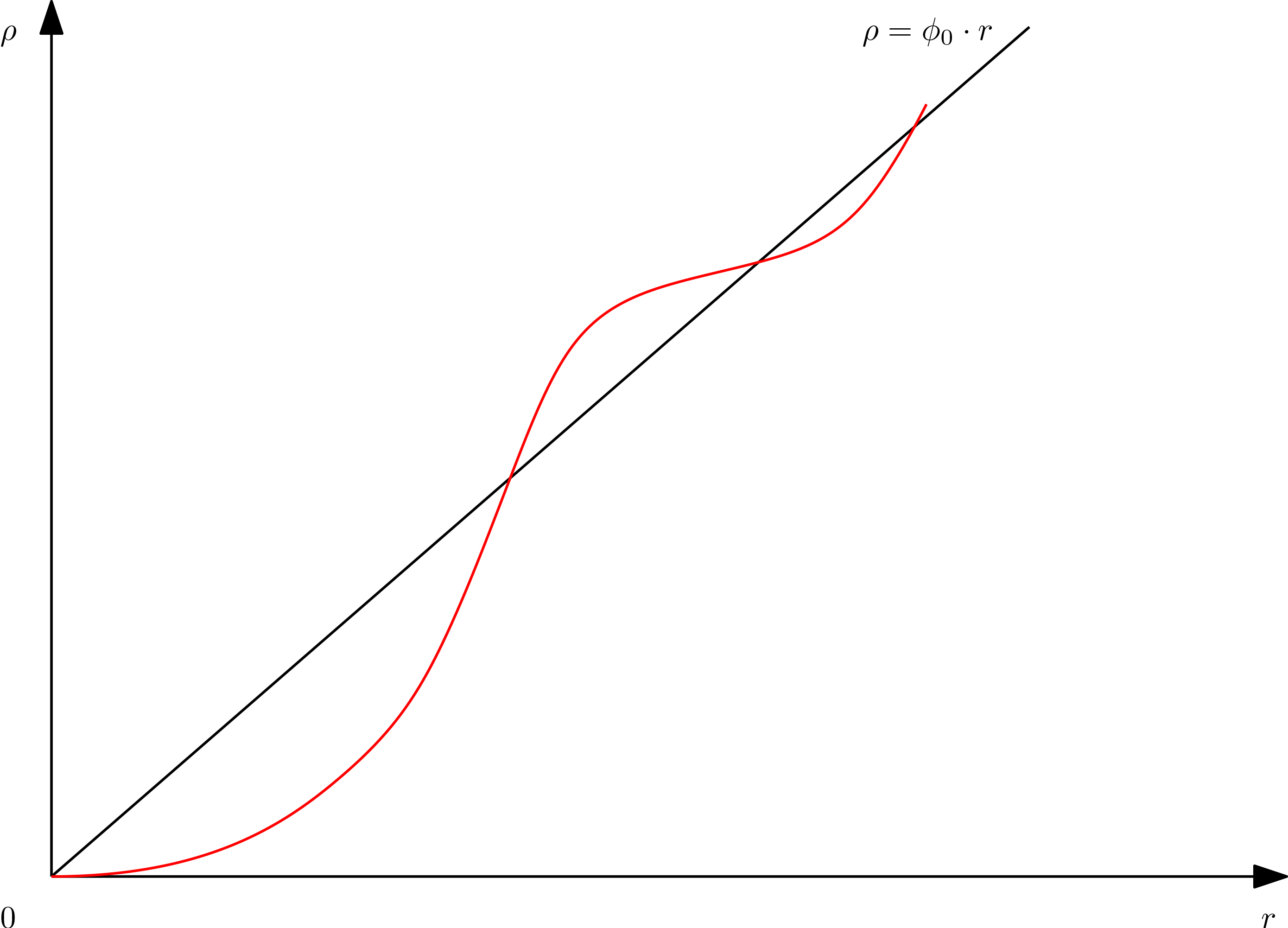}
                           \end{minipage}$$

 Much interesting information can be read off from the above pictures:
\begin{enumerate}
                  \item [(A)]
                              For each LOMSE $f$, there exists an entire analytic minimal graph whose tangent cone at infinity is exactly the LOC
                               associated to $f$ (see Theorem \ref{graph}).
                  \item [(B)]
                              For an LOMSE $f$ of Type (II),
                              there exist infinitely many analytic solutions to the Dirichlet problem
                              for $f_{\varphi_0}$; meanwhile, it also has a singular Lipschitz solution
                              which corresponds to the truncated LOC (see Theorem \ref{uni}).
                  \item[(C)] For Type (II),
                              although a Lipschitz solution arises for the boundary data $f_{\varphi_0}$,
                              there exists an $\epsilon>0$ such that
                                     the Dirichlet problem
                                      still has
                                     analytic solutions for $f_\varphi$
                                     whenenver
                                     $\varphi\in (\varphi_0,\varphi_0+\epsilon)$.
                  \item [(D)]
                              By the monotonicity of density for minimal submanifolds (currents) in Euclidean spaces (see \cite{fe,c-m}), LOCs associated to LOMSEs of Type (II) are all non-minimizing (see Theorem \ref{non-min}).

                  \end{enumerate}

 To the knowledge of the authors of the present paper, it seems to be the first time to have phenomena (B)-(C) observed,
                  and hard to foresee
                  the occurrence from the classical theory of partial differential equations.

By the machinery of calibrations, the LOC associated to the Hopf map from $S^3$ onto $S^2$
(i.e. the LOMSE of $(3,2,2)$-type)
was shown area-minimizing by Harvey-Lawson \cite{h-l}.
It would be interesting to consider
 whether the associated LOC is area-minimizing for an LOMSE of $(n,p,k)$-type.
 In Theorem \ref{non-min}, we establish a partial negative answer to the question. On the other hand,
 in a subsequent paper \cite{x-y-z}, we explore this subject from a different point of view and
 confirm that all LOCs associated to LOMSEs of $(n,p,2)$-type are
area-minimizing.

\bigskip\bigskip

\Section{Lawson-Osserman maps}{Lawson-Osserman maps}

\subsection{Preliminaries on harmonic maps}
Let $(M^n,g)$ and $(N^m,h)$ be Riemannian manifolds
 and $\phi$ be a smooth mapping from $M$ to $N$.
The \textit{energy desity} of $\phi$
at $x\in M$
is defined to be
\begin{equation}
e(\phi):=\f{1}{2}\sum_{i=1}^n h(\phi_* e_i,\phi_* e_i).
\end{equation}
Here $\{e_1,\cdots,e_n\}$ is an orthonormal basis of $T_xM$.
The \textit{total energy} $E(\phi)$ is
the integral of $e(\phi)$ over $M$.

Let $\td{\n}$ and $\n$ be Levi-Civita connections w.r.t. $g$ and $h$ respectively.
Then the \textit{second fundamental form} of $\phi$ is given by
\begin{equation}
B_{XY}(\phi):=\n_{\phi_* X}\phi_* Y-\phi_*(\td{\n}_X Y),
\end{equation}
whose trace under $g$ is the \textit{tensor field} of $M$
\begin{equation}
\tau(\phi):=\sum_{i=1}^n B_{e_ie_i}(\phi).
\end{equation}
 If $\tau(\phi)$ vanishes indentically,
 then $\phi$ is called a \textit{harmonic map}.
 When $B\equiv 0$,
 $\phi$ is called \textit{totally geodesic}.
 The first variation formula asserts that
 $\phi$ is harmonic if and only if it is a critical point of
 functional $E$.

For a smooth function $f:(M,g)\ra \R^n$, one can see that $\tau(f)=\De_g(f)$
where $\De_g$ is the {\it Laplace-Beltrami operator} for $g$.
Hence $f$ is harmonic if it is a {\it harmonic function} in the usual sense.

Given an isometric immersion $i: (M,g)\ra (N,h)$,
its second fundamental form can be
identified with the second fundamental form of $M$ in $N$,
and
its tensor field can be regarded as the mean curvature vector field $\mathbf{H}$.
Therefore, $i$ is  harmonic if and only if it is an isometric minimal immersion. Moreover, $i$ is
totally geodesic if and only if it is an isometric totally geodesic immersion.

For Riemannian submersions, we have the following characterization.

 \begin{pro}\label{ER} (see e.g. Proposition 1.12 of \cite{e-r})
 A Riemannian submersion $\pi:(M,g)\ra (N,h)$ is harmonic if and only if each fiber of $\pi$ is a minimal submanifold of $M$.
 \end{pro}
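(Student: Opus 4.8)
The plan is to evaluate the tension field $\tau(\pi)$ at an arbitrary point $x\in M$ by choosing an orthonormal frame adapted to the splitting $T_xM=\mc H_x\oplus\mc V_x$ of the tangent space into the horizontal distribution $\mc H$ and the vertical distribution $\mc V=\ker\pi_*$, and then to show that the horizontal part of $\tau(\pi)$ vanishes automatically while the vertical part is exactly the $\pi_*$-image of the mean curvature vector of the fiber through $x$. Concretely, pick orthonormal vectors $\{e_1,\dots,e_p\}\subset\mc H_x$ and $\{e_{p+1},\dots,e_n\}\subset\mc V_x$, extend the first group to basic vector fields (horizontal lifts of local vector fields on $N$) and the second group to local vertical vector fields, and split $\tau(\pi)=\sum_{i=1}^p B_{e_ie_i}(\pi)+\sum_{\a=p+1}^n B_{e_\a e_\a}(\pi)$.

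For the horizontal contribution I would invoke O'Neill's formula for a Riemannian submersion: if $X,Y$ are basic fields projecting to $\bar X,\bar Y$ on $N$, then the horizontal component of $\td\n_X Y$ is the horizontal lift of $\n_{\bar X}\bar Y$ and the remaining component is vertical. Since $\pi_*$ annihilates vertical vectors, $\pi_*(\td\n_X Y)=(\n_{\bar X}\bar Y)\circ\pi$; on the other hand the pullback-connection term in $B_{XY}(\pi)$ is $(\pi^{-1}\n)_X(\pi_*Y)=(\n_{\bar X}\bar Y)\circ\pi$ as well, because $\pi$ is a Riemannian submersion and $\pi_*Y=\bar Y\circ\pi$. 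Hence $B_{XY}(\pi)=0$ for basic $X,Y$, and in particular $\sum_{i=1}^p B_{e_ie_i}(\pi)=0$.

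For the vertical contribution, observe that $\pi_*e_\a\equiv 0$ wherever $e_\a$ is vertical, so the pullback-connection term drops out and $B_{e_\a e_\a}(\pi)=-\pi_*(\td\n_{e_\a}e_\a)=-\pi_*\big((\td\n_{e_\a}e_\a)^{\mc H}\big)$. Since $\pi$ is Riemannian, $\mc H_x$ is precisely the normal space at $x$ of the fiber $F:=\pi^{-1}(\pi(x))$, so $(\td\n_{e_\a}e_\a)^{\mc H}$ is the value at $x$ of the second fundamental form of $F$ in $M$ on the pair $(e_\a,e_\a)$; summing over $\a$ produces the trace $\mathbf{H}^F$ of that second fundamental form, i.e. the (unnormalized) mean curvature vector of $F$ at $x$. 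Therefore $\tau(\pi)=-\pi_*(\mathbf{H}^F)$ at $x$. Because $(\pi_*)_x$ restricts to a linear isomorphism $\mc H_x\to T_{\pi(x)}N$ and $\mathbf{H}^F\in\mc H_x$, we conclude that $\tau(\pi)$ vanishes at $x$ if and only if $\mathbf{H}^F$ vanishes at $x$; letting $x$ range over $M$ gives exactly the equivalence "$\pi$ harmonic $\iff$ every fiber is a minimal submanifold of $M$". There is no serious obstacle here: the only point requiring care is the bookkeeping between the pullback connection $\pi^{-1}\n$ entering the definition of $B_{\cdot\cdot}(\pi)$ and the Levi--Civita connection $\n$ on $N$, together with a clean application of O'Neill's identity for basic fields; once these are in place the identity $\tau(\pi)=-\pi_*(\mathbf{H}^F)$, and hence the proposition, is immediate.
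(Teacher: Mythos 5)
Your argument is correct: splitting $\tau(\pi)$ over an adapted frame, killing the horizontal trace via O'Neill's identity for basic fields, and identifying the vertical trace with $-\pi_*$ of the fiber's mean curvature vector (which lies in the horizontal space, on which $\pi_*$ is injective) is exactly the standard proof. The paper itself supplies no proof of this proposition --- it simply cites Proposition 1.12 of Eells--Ratto --- and your write-up reproduces the argument given there, so there is nothing to reconcile.
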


Let $M^n, N^m, \bar{N}$ be Riemannian manifolds, and $\phi: M^n\ra N^m$, $\bar{\phi}:N^m \ra \bar{N}$ be smooth maps.
We have the fundamental composition formula for tension fields (see Proposition 1.14 in \cite{e-r} or \S 1.4 of \cite{xin1}):
\begin{equation}\label{com1}
\tau(\bar{\phi}\circ \phi)=\bar{\phi}_*(\tau(\phi))+\sum_{j=1}^n B_{\phi_* e_j,\phi_* e_j}(\bar{\phi}).
\end{equation}
In particular, for an isometric immersion $\bar{\phi}$,
\begin{equation}\label{com2}
\tau(\bar{\phi}\circ \phi)=\tau(\phi)+\sum_{j=1}^n B(\phi_* e_j,\phi_* e_j)
\end{equation}
where $B$ is the second fundamental form of $N$ in $\bar{N}$.
\bigskip

\subsection{Necessary and sufficient conditions for LOSs}\label{NSCL}
Let $S^d\subset \R^{d+1}$ be the $d$-dimensional unit sphere,
$g_d$ the canonical metric 
induced by
the inclusion map $i_d: S^d\ra \R^{d+1}$, and $B_d$ the second fundamental form of $S^d$ in $\R^{d+1}$.

Given smooth $f:S^n\rightarrow S^m$ and an acute angle $\th$, let $I_{f,\th}: S^n\ra S^{n+m+1}$
\begin{equation}
I_{f,\th}(x)=(\cos\th\cdot x,\sin\th\cdot f(x))
\end{equation}
be the embedding associated to $f$ and $\th$, and $g:=I_{f,\th}^*g_{n+m+1}$.
We shall study when $I_{f,\th}$ is minimal and thus yields an LOS $M_{f,\th}$.

Let $\mathbf{X}(x)$,
$\mathbf Y_1(x)$ and $\mathbf Y_2(x)$ be the position vectors of
 $I_{f,\th}(x)$ in $\R^{n+m+2}$,
$x$ in $\R^{n+1}$ and $f(x)$ in $\R^{m+1}$ respectively.
Then
\begin{equation}\label{position}
\mathbf{X}(x)=(\cos\th\mathbf{Y}_1(x),\sin\th\mathbf{Y}_2(x)).
\end{equation}
Here
$\mathbf X$ can be viewed from two different angles. On the one hand, $\mathbf{X}$ is a vector-valued function on $(S^n,g)$ and we have $\De_g \mathbf X=\tau(\mathbf X)$.
On the other hand,
$\mathbf X=i_{n+m+1}\circ I_{f,\th}$, and consequently by the composition formula
\eqref{com2} we have
\begin{equation}\label{x2}
\aligned
\De_g \mathbf{X}&=\tau(\mathbf{X})=\tau(i_{n+m+1}\circ I_{f,\th})
=\tau(I_{f,\th})+\sum_{j=1}^n B_{n+m+1}((I_{f,\th})_* e_j,(I_{f,\th})_* e_j)\\
&=\mathbf{H}-\sum_{j=1}^n \lan (I_{f,\th})_* e_j,(I_{f,\th})_* e_j\ran\mathbf{X}
=\mathbf{H}-\sum_{j=1}^n g(e_j,e_j)\mathbf{X}=\mathbf{H}-n\mathbf{X}.
\endaligned
\end{equation}
Here $\{e_1,\cdots,e_n\}$ is an orthonormal basis of $(T_x S^n,g)$, $\lan \cdot,\cdot\ran$ the Euclidean inner product,
and
$\mathbf H$ the mean curvature field of $(S^n,g)$ in $S^{n+m+1}$.
We remark that $\mathbf H\bot \mathbf {X}$ pointwise.

Similarly,
for $\mathbf{Y_1}=i_n\circ \mathbf{Id}$ where $\mathbf{Id}$ is the identity map from $(S^n,g)$ to $(S^n,g_n)$
and $\mathbf{Y_2}=i_m\circ f$,
we derive
\begin{equation}\label{y1}
\De_g \mathbf{Y}_1=\tau(\mathbf{Y}_1)=\tau(i_n\circ \mathbf{Id})
=\tau(\mathbf{Id})-2e(\mathbf{Id})\mathbf{Y}_1,
\end{equation}
where $\tau(\mathbf{Id})\bot \mathbf{Y}_1$,
and
\begin{equation}\label{y2}
\De_g \mathbf{Y}_2=\tau(\mathbf{Y}_2)
=\tau(f)-2e(f)\mathbf{Y}_2,
\end{equation}
where $\tau(f)\bot \mathbf Y_2$.

By (\ref{y1}) and (\ref{y2}), we obtain
\begin{equation}\label{x1}
\De_g \mathbf{X}
=\Big(\cos\th \big(\tau(\mathbf{Id})-2e(\mathbf{Id}) \mathbf{Y}_1\big),\sin\th \big(\tau(f)-2e(f) \mathbf{Y}_2\big)\Big).
\end{equation}
Comparing \eqref{x1} and \eqref{x2} produces
\begin{equation}\label{mean}
\mathbf{H}=\Big(\cos\th \big(\tau(\mathbf{Id})-(2e(\mathbf{Id})-n) \mathbf{Y}_1\big),\sin\th \big(\tau(f)-(2e(f)-n) \mathbf{Y}_2\big)\Big).
\end{equation}
We shall employ this relationship for the characterization of LOS.

\begin{thm}\label{con}

For smooth $f:S^n\rightarrow S^m$ and $\th\in (0,\pi/2)$,
$I_{f,\th}$ is minimal (i.e., $M_{f,\th}$ is an LOS in $S^{n+m+1}$)
if and only if the following conditions hold:
\begin{enumerate}
\item[(a)] $f:(S^n,g)\ra (S^m,g_m)$ is harmonic.
\item[(b)] For each $x\in S^n$ and the singular values $\la_1,\cdots,\la_n$  of
$(f_*)_x: (T_x S^n,g_n)\ra (T_{f(x)} S^m,g_m)$,
$\sum\limits_{j=1}^n \f{1}{\cos^2\th+\sin^2\th \la_j^2}=n.$
\end{enumerate}

\end{thm}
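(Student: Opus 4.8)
The plan is to extract both conditions directly from the mean-curvature formula \eqref{mean}, together with a diagonalization of the induced metric $g = I_{f,\th}^*g_{n+m+1} = \cos^2\th\,g_n + \sin^2\th\,f^*g_m$. Write $\mathbf H = (\mathbf H_1,\mathbf H_2)$ along the splitting $\R^{n+m+2}=\R^{n+1}\oplus\R^{m+1}$, so that \eqref{mean} reads $\mathbf H_1 = \cos\th\big(\tau(\mathbf{Id}) - (2e(\mathbf{Id})-n)\mathbf Y_1\big)$ and $\mathbf H_2 = \sin\th\big(\tau(f) - (2e(f)-n)\mathbf Y_2\big)$; recall that $\tau(\mathbf{Id})$ and $\tau(f)$ are tangent to $S^n$ and $S^m$, hence orthogonal to the position vectors $\mathbf Y_1$ and $\mathbf Y_2$, which themselves have unit length.

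First I would fix $x\in S^n$ and choose a $g_n$-orthonormal basis $v_1,\dots,v_n$ of $T_xS^n$ realizing the singular value decomposition of $(f_*)_x$, so that $g_m(f_*v_i,f_*v_j)=\la_i^2\delta_{ij}$. Then $g(v_i,v_j)=(\cos^2\th+\sin^2\th\,\la_j^2)\delta_{ij}$, so with $\mu_j:=\sqrt{\cos^2\th+\sin^2\th\,\la_j^2}$ the vectors $e_j:=\mu_j^{-1}v_j$ form a $g$-orthonormal basis, and
\begin{equation*}
2e(\mathbf{Id})=\sum_{j=1}^n g_n(e_j,e_j)=\sum_{j=1}^n\frac{1}{\mu_j^2},\qquad 2e(f)=\sum_{j=1}^n g_m(f_*e_j,f_*e_j)=\sum_{j=1}^n\frac{\la_j^2}{\mu_j^2}.
\end{equation*}
Substituting $\sin^2\th\,\la_j^2=\mu_j^2-\cos^2\th$ yields the identity $\sin^2\th\cdot 2e(f)=n-\cos^2\th\cdot 2e(\mathbf{Id})$; in particular $2e(\mathbf{Id})=n$ and $2e(f)=n$ are equivalent, and this common equation is precisely condition (b).

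For the forward direction, if $M_{f,\th}$ is an LOS then $\mathbf H\equiv 0$. Since $\cos\th\neq 0$, $\mathbf H_1=0$ gives $\tau(\mathbf{Id})=(2e(\mathbf{Id})-n)\mathbf Y_1$; the left-hand side lies in $T_xS^n$ and the right-hand side is radial, so both vanish, hence $\tau(\mathbf{Id})=0$ and $e(\mathbf{Id})=n/2$, the latter being (b). Likewise $\mathbf H_2=0$ and $\sin\th\neq 0$ give $\tau(f)=0$, which is (a) (and $e(f)=n/2$, consistent with (b)). Conversely, assume (a) and (b). Then $\tau(f)=0$, and by the identity above $2e(\mathbf{Id})=2e(f)=n$, so the displayed expressions collapse to $\mathbf H_2=0$ and $\mathbf H_1=\cos\th\,\tau(\mathbf{Id})$, i.e. $\mathbf H=(\cos\th\,\tau(\mathbf{Id}),0)$. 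Pairing $\mathbf H$ with $(I_{f,\th})_*e_j=(\cos\th\,e_j,\sin\th\,f_*e_j)$ and using that $\mathbf H$, being the mean curvature vector of the immersion $I_{f,\th}$, is orthogonal to $(I_{f,\th})_*T_xS^n$, we get $\cos^2\th\,g_n(\tau(\mathbf{Id}),e_j)=0$ for every $j$; since $\{e_j\}$ spans $T_xS^n$, this forces $\tau(\mathbf{Id})=0$ and hence $\mathbf H=0$, so $I_{f,\th}$ is minimal.

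The genuinely delicate point is this last step of the converse: conditions (a) and (b) do not by themselves visibly annihilate $\tau(\mathbf{Id})$, and one must observe that, once (a) and (b) have reduced $\mathbf H$ to the purely tangential, purely $\R^{n+1}$ vector $\cos\th\,\tau(\mathbf{Id})$, the standard fact that a mean curvature vector is normal to the submanifold does the rest. Everything else is bookkeeping with \eqref{mean} and the singular-value diagonalization of $g$.
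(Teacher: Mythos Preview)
Your proof is correct and follows essentially the same approach as the paper: both arguments diagonalize $g$ via the singular-value basis, identify condition (b) with $2e(\mathbf{Id})=2e(f)=n$, read off the forward direction from the mean-curvature formula \eqref{mean}, and in the converse reduce $\mathbf H$ to $(\cos\th\,\tau(\mathbf{Id}),0)$ before using normality of $\mathbf H$ to kill $\tau(\mathbf{Id})$. Your identity $\sin^2\th\cdot 2e(f)=n-\cos^2\th\cdot 2e(\mathbf{Id})$ is a slightly cleaner way to see the equivalence $2e(\mathbf{Id})=n\Leftrightarrow 2e(f)=n$ than the paper's phrasing, but the substance is the same.
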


\begin{proof}






 Firstly, we claim Condition (b)
has another two equivalent statements as follows:
\begin{enumerate}
\item [(c)] The energy density of $\mathbf{Id}:(S^n,g)\ra (S^n,g_n)$ is $\f{n}{2}$ everywhere.
\item [(d)] The energy density of $f:(S^n,g)\ra (S^m,g_m)$ is $\f{n}{2}$ everywhere.
\end{enumerate}
Now we give a proof of (b)$\Leftrightarrow$(c).
Due to the theory of singular value decomposition, there exists
an orthonormal basis $\{\ep_1,\cdots,\ep_n\}$ of $(T_x S^n,g_n)$, such that
\begin{equation}
\lan f_* \ep_j,f_*\ep_k\ran=\la_j^2 \de_{jk}.
\end{equation}
Set
\begin{equation}\label{basis3}
e_j:=\f{1}{\sqrt{\cos^2\th+\sin^2\th\la_j^2}}\ep_j.
\end{equation}
Then we have
\begin{equation}\aligned
g(e_j,e_k)&=\lan (I_{f,\th})_* e_j,(I_{f,\th})_* e_k\ran\\
&=\lan (\cos\th e_j,\sin\th f_* e_j),(\cos\th e_k,\sin\th f_* e_k)\ran\\
&=\cos^2\th \lan e_j,e_k\ran+\sin^2 \th\lan f_* e_j,f_* e_k\ran\\
&=\de_{jk}.
\endaligned
\end{equation}
This implies that $\{e_1,\cdots,e_n\}$ is an orthonormal basis of $(T_x S^n,g)$. Here and in the sequel,
we call such $\{\ep_1,\cdots,\ep_n\}$ and $\{e_1,\cdots,e_n\}$ the {\bf S-bases} of
$(S^n,g_n)$ and $(S^n,g)$ for $f$  (w.r.t. $\la_1,\cdots,\la_n$ and $\f{\la_1}{\sqrt{\cos^2\th+\sin^2\th \la_1^2}},\cdots,\f{\la_n}{\sqrt{\cos^2\th+\sin^2\th\la_n^2}}$ respectively). Then
$$2e(\mathbf{Id})=\sum_{j=1}^n\lan e_j,e_j\ran=\sum_{j=1}^n \f{1}{\cos^2\th+\sin^2\th\la_j^2}.$$
Therefore (b) is equivalent to (c). Also note that $\sum\limits_{j=1}^n \f{1}{\cos^2\th+\sin^2\th \la_j^2}=n$ is equivalent
to $\sum\limits_{j=1}^n \f{1}{\cos^2\th+\sin^2\th \la_j^2}=n$ for an acute angle $\th$. So  (b) and (d) are equivalent as well.

If $I_{f,\th}$ is an isometric minimal embedding, i.e. $\mathbf{H}=0$, then (\ref{mean}) implies $\tau(\mathbf{Id})=\tau(f)=0$ and $e(\mathbf{Id})=e(f)=\f{n}{2}$,
hence Conditions (a)-(b) hold.

Conversely, when Conditions (a)-(b) hold,
substituting $e(\mathbf{Id})=e(f)=\f{n}{2}$ and $\tau(f)=0$ into (\ref{mean}) implies
$\mathbf{H}=(\cos\th\cdot \tau(\mathbf{Id}),0).$
Since $\mathbf{H}\bot (I_{f,\th})_* (T_x S^n)$,
$$0=\lan \mathbf{H},(I_{f,\th})_* v\ran=\big\lan (\cos\th\cdot \tau(\mathbf{Id}),0),(v,f_*v)\big\ran=\cos\th \lan \tau(\mathbf{Id}),v\ran$$
for every $v\in T_x S^n$. Hence
\begin{equation}\label{Idh}
\tau(\mathbf{Id})=0
\end{equation}
 and moreover $\mathbf{H}=0$, i.e., $M_{f,\th}$ is an LOS in
$S^{n+m+1}$.

\end{proof}


\bigskip
\subsection{Characterizations of trivial LOMs}

For an isometric totally geodesic embedding $f:(S^n,g_n)\ra (S^m,g_m)$,
it is easy to see that $M_{f,\th}$ is totally geodesic in $S^{n+m+1}$ for arbitrary $\th\in (0,\pi/2)$. We call such {$f$ a} {\bf trivial LOM}.
The following characterizes trivial LOMs from the aspect of singular values.

\begin{pro}\label{g1}

For an LOM $f:S^n\ra S^m$, the followings are equivalent:

\begin{enumerate}
\item[(i)] All singular values of $(f_*)_x$ are equal at each $x$.
\item[(ii)] All singular values of $(f_*)_x$ are equal to $1$.
\item[(iii)] $f:(S^n,g_n)\ra (S^m,g_m)$ is an isometric immersion.
\item[(iv)] $f:(S^n,g_n)\ra (S^m,g_m)$ is an isometric totally geodesic embedding.
\item[(v)] For every $\th\in (0,\pi/2)$, $M_{f,\th}$ is totally geodesic.
\item[(vi)] There exists $\th\in (0,\pi/2)$, such that $M_{f,\th}$ is a totally geodesic LOS.

\end{enumerate}

\end{pro}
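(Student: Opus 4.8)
The strategy I would follow is to close the loop $(i)\Rightarrow(ii)\Rightarrow(iv)\Rightarrow(v)\Rightarrow(vi)\Rightarrow(i)$, after disposing of the tautological links $(ii)\Leftrightarrow(iii)$ and $(iv)\Rightarrow(iii)$: by the singular value decomposition, all singular values of $(f_*)_x$ equal $1$ exactly when $(f_*)_x^{\top}(f_*)_x=\mathrm{Id}$, i.e. when $f:(S^n,g_n)\to(S^m,g_m)$ is an isometric immersion, and a totally geodesic isometric embedding is in particular an isometric immersion. Throughout I would exploit that, $f$ being an LOM, there is an acute $\th$ for which conditions (a) and (b) of Theorem \ref{con} hold, and that $I_{f,\th}$ is automatically an embedding since its first component $\cos\th\cdot x$ already recovers $x$.

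For $(i)\Rightarrow(ii)$ I would observe that if at a point $x$ all singular values of $(f_*)_x$ share a common value $\la(x)$, then condition (b) reads $n\bigl(\cos^2\th+\sin^2\th\,\la(x)^2\bigr)^{-1}=n$, so $\sin^2\th\bigl(\la(x)^2-1\bigr)=0$, and $\th$ being acute forces $\la(x)=1$. For $(ii)\Rightarrow(iv)$ I would first note that substituting $\la_j\equiv 1$ into \eqref{basis3} gives $e_j=\ep_j$, hence $g=g_n$; thus $f:(S^n,g_n)\to(S^m,g_m)$ is simultaneously an isometric immersion (by $(iii)$) and harmonic (condition (a) of Theorem \ref{con}, now with $g=g_n$), i.e. a minimal isometric immersion of $S^n(1)$ into $S^m(1)$. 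The crucial classical input is that any such immersion is totally geodesic: since $S^n(1)$ and $S^m(1)$ carry the same constant-$1$ curvature tensor, the Gauss equation collapses to $\lan B(X,W),B(Y,Z)\ran=\lan B(X,Z),B(Y,W)\ran$ for the second fundamental form $B$ of $f$; taking $X=Y=e_i$, $Z=W=e_j$ and summing over $j$, with $\sum_j B(e_j,e_j)=\tau(f)=0$, yields $\sum_j|B(e_i,e_j)|^2=0$ for each $i$, so $B\equiv 0$. (Takahashi's theorem together with $\lambda_1(S^n(1))=n$ gives the same conclusion.) Finally, a totally geodesic isometric immersion of the compact $S^n$ has image a great $n$-sphere and is a Riemannian covering of it, hence a diffeomorphism onto it, so $(iv)$ holds.

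For $(iv)\Rightarrow(v)$: such an $f$ is the restriction to $S^n$ of a linear isometry $L$ of $\R^{n+1}$ onto an $(n+1)$-plane inside $\R^{m+1}$, so for every $\th$ the map $I_{f,\th}(x)=(\cos\th\cdot x,\sin\th\cdot Lx)$ is the restriction of the linear $A=(\cos\th\cdot\mathrm{Id},\ \sin\th\cdot L)$, and $|Ax|^2=\cos^2\th\,|x|^2+\sin^2\th\,|Lx|^2=|x|^2$, so $A$ is an isometry onto its image; hence $M_{f,\th}=A(S^n)$ is a great $n$-sphere in $S^{n+m+1}$, so totally geodesic. The link $(v)\Rightarrow(vi)$ is trivial, a totally geodesic $M_{f,\th}$ being in particular minimal, hence an LOS. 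For $(vi)\Rightarrow(i)$ — in fact recovering $(iv)$ — suppose $M_{f,\th_0}$ is totally geodesic; then $I_{f,\th_0}(S^n)$ spans an $(n+1)$-plane $W\subset\R^{n+m+2}$, and since the first component $\cos\th_0\cdot x$ has image spanning $\R^{n+1}$, the projection $W\to\R^{n+1}$ is onto, hence an isomorphism, so $W=\{(v,Lv):v\in\R^{n+1}\}$ for some linear $L$. Therefore $\sin\th_0\,f(x)=\cos\th_0\,Lx$, i.e. $f=\cot\th_0\,L|_{S^n}$; the constraint $|f|\equiv 1$ forces $x\mapsto|Lx|^2$ to be constant on $S^n$, hence $\cot^2\th_0\,L^{\top}L=\mathrm{Id}$, so $f$ is an isometric totally geodesic embedding and in particular all singular values of $(f_*)_x$ equal $1$.

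The step I expect to be the main obstacle is $(ii)\Rightarrow(iv)$, precisely the upgrade from ``minimal isometric immersion of $S^n(1)$ into $S^m(1)$'' to ``totally geodesic''. This is exactly where the LOM hypothesis does essential work: a bare isometric immersion $S^n(1)\to S^m(1)$ need not be totally geodesic, and it is the minimality forced by condition (a) of Theorem \ref{con} that, through the Gauss equation (or Takahashi's theorem), rules out any nonzero second fundamental form. Every other implication is elementary bookkeeping with the explicit linear form of $I_{f,\th}$.
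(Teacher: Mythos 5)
Your proof is correct and follows essentially the same route as the paper: (i)$\Rightarrow$(ii) from Condition (b) of Theorem \ref{con}, the upgrade to totally geodesic from Condition (a) together with the Gauss equation, and the remaining implications by direct linear-algebra bookkeeping (which the paper simply labels trivial). One small slip: in the Gauss-equation step the substitution $X=Y=e_i$, $Z=W=e_j$ makes both sides of $\lan B(X,W),B(Y,Z)\ran=\lan B(X,Z),B(Y,W)\ran$ equal to $|B(e_i,e_j)|^2$, so you should instead take $X=W=e_i$, $Y=Z=e_j$ to get $\lan B(e_i,e_i),B(e_j,e_j)\ran=|B(e_i,e_j)|^2$, which after summing over $j$ and using $\tau(f)=0$ yields the claimed $\sum_j|B(e_i,e_j)|^2=0$.
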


\begin{proof}

(i)$\Rightarrow$(ii) immediately follows from Condition (b) in Theorem \ref{con};
 (iii)$\Rightarrow$(iv) is a direct corollary
of Condition (a) in Theorem \ref{con} and the {\it Gauss equations};
and the proofs of (ii)$\Rightarrow$(iii) and
(iv)$\Rightarrow$(v)$\Rightarrow$(vi)$\Rightarrow$(i) are trivial.
\end{proof}

\begin{cor}\label{cor1}
Let $f:S^n\ra S^m$ be a smooth map.
Then
\begin{itemize}
\item If $n\geq 2$ and $m=1$, then $f$ cannot be an LOM.

\item If $n\leq 2$ and $m\geq n$, then $f$ is an LOM if and only if $f$ is a trivial one.
\end{itemize}
\end{cor}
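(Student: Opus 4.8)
The plan is to derive the first bullet from Condition (b) of Theorem~\ref{con} together with a cohomological obstruction, and the second from Proposition~\ref{g1}, with a conformality argument supplying hypothesis (i) of that proposition in the borderline case $n=2$.

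For the first bullet, suppose for contradiction that $f:S^n\ra S^1$ with $n\ge 2$ is an LOM with acute angle $\th$. Pull back a volume form $\om$ of $S^1$: the $1$-form $f^*\om$ is closed, hence exact since $H^1(S^n;\R)=0$, say $f^*\om=dh$. As $S^n$ is compact, $h$ has a critical point $p$; there $f^*\om$ vanishes, and since $\om$ is a volume form on the one-dimensional $S^1$ this forces $(f_*)_p=0$. Then every singular value of $(f_*)_p$ is zero, so Condition (b) of Theorem~\ref{con} reads $n/\cos^2\th=n$, whence $\th=0$, contradicting $\th\in(0,\pi/2)$. Therefore no such $f$ is an LOM.

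For the second bullet, the "if" direction is the definition of a trivial LOM, so assume $f:S^n\ra S^m$ with $1\le n\le 2\le m$ is an LOM; by Proposition~\ref{g1} ((i)$\Rightarrow$(iv)) it suffices to prove that the singular values of $(f_*)_x$ coincide at every $x$. For $n=1$ there is a single singular value and this is vacuous. For $n=2$ we argue as follows. By Condition (a) of Theorem~\ref{con}, $f:(S^2,g)\ra (S^m,g_m)$ is harmonic, where $g=\cos^2\th\,g_2+\sin^2\th\,f^*g_m$ is the Riemannian metric on $S^2$ induced by $I_{f,\th}$. The conformal class of $g$ on $S^2$ is that of $\cp 1$, so the Hopf differential of the harmonic map $f$ — the $(2,0)$-part of $f^*g_m$ relative to that conformal structure — is a holomorphic quadratic differential and therefore vanishes identically; equivalently $f^*g_m=\nu\,g$ for some function $\nu\ge 0$. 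Substituting this back into the formula for $g$ gives $(1-\nu\sin^2\th)\,g=\cos^2\th\,g_2$, and since $g,g_2$ are positive definite and $\cos^2\th>0$ the factor $1-\nu\sin^2\th$ is positive; hence $g$, and with it $f^*g_m=\nu g$, is pointwise a positive multiple of $g_2$, i.e. $\la_1(x)=\la_2(x)$ for all $x$. Proposition~\ref{g1} then shows $f$ is an isometric totally geodesic embedding, i.e. a trivial LOM.

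The main obstacle is the $n=2$ subcase: it requires invoking the holomorphicity of the Hopf differential of a harmonic map from a closed surface together with the absence of nonzero holomorphic quadratic differentials in genus zero, and then bootstrapping from weak conformality relative to the induced metric $g$ to weak conformality relative to the round metric $g_2$. One should also note that the a priori possibility $\la_1(x)=\la_2(x)=0$ is harmless, since once hypothesis (i) of Proposition~\ref{g1} holds, Condition (b) of Theorem~\ref{con} forces the common singular value to equal $1$; all the remaining pieces (the $m=1$ case, the $n=1$ case, and the "if" direction) are immediate.
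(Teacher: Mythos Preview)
Your proof is correct, and both bullets differ in execution from the paper's argument.

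For the first bullet, the paper instead uses Condition~(a): since $f:(S^n,g)\to S^1$ is harmonic and $\pi_1(S^n)=0$ for $n\ge 2$, the lift $\tilde f:(S^n,g)\to\R$ is a harmonic function on a compact manifold, hence constant by the maximum principle, contradicting Proposition~\ref{g1}(ii). Your route avoids harmonicity altogether, using only $H^1(S^n;\R)=0$ to produce a critical point and then Condition~(b); this is more elementary and in fact shows that no smooth map $S^n\to S^1$ with $n\ge 2$ can satisfy Condition~(b) alone for any acute $\th$.

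For the $n=2$ case, both proofs rest on the conformality of harmonic maps from the $2$-sphere. The paper applies it to \emph{two} maps: not only $f$ but also $\mathbf{Id}:(S^2,g)\to(S^2,g_2)$, which is harmonic by \eqref{Idh}; combined with the energy identities $e(\mathbf{Id})=e(f)=1$ (Conditions~(c)--(d) in the proof of Theorem~\ref{con}), this directly yields that $f:(S^2,g_2)\to(S^m,g_m)$ is an isometric immersion, landing on hypothesis~(iii) of Proposition~\ref{g1}. You apply conformality only to $f$ and then recover conformality relative to $g_2$ algebraically from $g=\cos^2\th\,g_2+\sin^2\th\,f^*g_m$, landing on hypothesis~(i). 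The paper's path is marginally slicker, but both are sound.
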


\begin{proof}
We shall study each case according to the the values of $n$ and $m$.

\textbf{Case I. }$n=1$.
In this case, $(f_*)_x$ has only one singular value. By (i) of
Proposition \ref{g1}, $f$ is an LOM if and only if $f$ is a trivial one.

\textbf{Case II. }$n\geq 2$, $m=1$.
If there were one LOM $f$, then by Theorem \ref{con}
$f:(S^n,g)\ra S^1$ is harmonic.
So is its lifting map $\td{f}:(S^n,g)\ra\R$.
But the strong maximal principle forces $\td{f}$ to be constant, and hence the same for $f$,
which contradicts (ii) of Proposition \ref{g1}.
Thus there are no LOMs in this setting.

\textbf{Case III. }$n=2, m\geq 2$.
Suppose $f$ is an LOM and $I_{f,\th}:(S^2,g)\ra (S^{3+m},g_{3+m})$ is the corresponding isometric minimal embedding.
Then by Theorem \ref{con} and \eqref{Idh}
$\mathbf{Id}: (S^2,g)\ra (S^2,g_2)$ and $f: (S^2,g)\ra (S^m,g_m)$ are both harmonic.
 It is well known that every harmonic map from a $2$-sphere
 (equipped with arbitrary metric)
 is conformal (see \S I.5 of \cite{s-y}).
For an orthonormal basis $\{e_1,e_2\}$ of $(T_x S^2,g)$,
we have
$$\lan e_1,e_1\ran =\lan e_2,e_2\ran=e(\mathbf{Id})=1,\quad \lan e_1,e_2\ran=0$$
and
$$\lan f_* e_1,f_* e_1\ran=\lan f_* e_2,f_* e_2\ran=e(f)=1,\quad \lan f_* e_1,f_* e_2\ran=0.$$
Hence $f:(S^2,g_2)\ra (S^m,g_m)$ is an isometric immersion. By (iii) of Proposition \ref{g1},
$f:(S^2,g_2)\ra (S^m,g_m)$ is a trivial LOM.
\end{proof}

\textbf{Remarks.}
\begin{itemize}
\item   The Hopf map from $S^3$ onto $S^2$ gives a nontrivial LOM.
Thus the restriction on $n$ and $m$ in Corollary \ref{cor1} is necessary and optimal.
\item 
For an LOM $f$, the corresponding $C_{f,\th}$ is flat if and only if $f$ is trivial.
Hence the second part of the above corollary follows from the rigidity theorems in \cite{c-o}, \cite{b} and \cite{fc}.
However, it is unknown up to now whether or not there exists a nonflat, non-parametric minimal cone of codimension 2,
so the first part of Corollary \ref{cor1} cannot be derived from previous works.

\end{itemize}

\bigskip

\subsection{Nontrivial LOMSEs}

It can be observed that
three original LOMs,
the Hopf maps
 $H^{2m-1,m}:(S^{2m-1},g_{2m-1})\ra (S^m, g_m)$ for $m=2, 4$ and $8$,
have singular values $0$ and $2$ of multiplicities $m-1$ and $m$ pointwise.
In fact, we can gain the following structure theorem for LOMSEs. 

\begin{thm}\label{g2}
For smooth $f:S^n\ra S^m$, the followings are equivalent:
\begin{enumerate}
\item[(i)] $f$ is a nontrivial LOMSE, namely for each $x\in S^n$, all the nonzero singular values of $(f_*)_x$ are equal.
\item[(ii)] $f$ is an LOM, and $f$ has two constant singular values $0$ and $\la>0$
of multiplicities $(n-p)$ and $p$ respectively everywhere.
\item[(iii)] There exist a $p$-dimensional Riemannian manifold $(P,h)$ with $p<n$, a real number $\la>\sqrt{\f{n}{p}}$, a map $\pi$ from $S^n$ onto $P$ and
 a map $i$ from $P$ into $S^m$, such that $f=i\circ \pi$, $\pi:(S^n,g_n)\ra (P,h)$ is a harmonic Riemannian submersion with connected fibers, and
 $i:(P,\la^2 h)\ra (S^m,g_m)$ is an isometric minimal immersion.

\end{enumerate}
 Assume $f$ satisfies one of the above. Then $M_{f,\th}$ becomes an LOS
 exactly when
 \begin{equation}\label{th}
\th=\arccos\sqrt{\f{n-p}{n(1-\la^{-2})}}.
\end{equation}
\end{thm}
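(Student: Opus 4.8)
I would prove Theorem \ref{g2} by establishing the cycle of implications (i) $\Rightarrow$ (ii) $\Rightarrow$ (iii) $\Rightarrow$ (i), and then derive the formula \eqref{th} for $\th$ as a separate final step. The main conceptual input is Theorem \ref{con}: $f$ is an LOM precisely when $f:(S^n,g)\to(S^m,g_m)$ is harmonic and the singular-value identity in Condition (b) holds, together with the equivalent reformulations (c)–(d) that $e(\mathbf{Id})=e(f)=\tfrac n2$ with respect to the twisted metric $g$. The structural result of Wang \cite{w} on Riemannian submersions of round spheres with connected fibers will be invoked in the (ii) $\Rightarrow$ (iii) step.

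\medskip
\textbf{Step 1: (i) $\Rightarrow$ (ii).} Starting from an LOM $f$ whose nonzero singular values at each point are all equal to some $\la(x)>0$, I first rule out $\la(x)\le 1$ on a dense set: if all singular values were $\le 1$ with equality forced somewhere, one checks via Condition (b) that they would have to all equal $1$, contradicting nontriviality (Proposition \ref{g1}). So $\la(x)>1$. Writing $p(x)$ for the rank of $(f_*)_x$, Condition (b) of Theorem \ref{con} reads
\begin{equation*}
(n-p(x))+\f{p(x)}{\cos^2\th+\sin^2\th\,\la(x)^2}=n,
\end{equation*}
which rearranges to $\dfrac{p(x)}{\cos^2\th+\sin^2\th\,\la(x)^2}=p(x)-0$... more precisely to $\cos^2\th+\sin^2\th\,\la(x)^2 = \dfrac{p(x)}{?}$; solving gives $\la(x)^2$ as an explicit strictly monotone function of $p(x)$. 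Since $p(x)$ takes values in a finite set and $\la(x)$ is continuous (as stated in the introduction), continuity forces $p(x)\equiv p$ and $\la(x)\equiv\la$ constant. The condition $\la>1$ and the identity then give $\la>\sqrt{n/p}$, the bound appearing in (iii). Harmonicity of $f:(S^n,g)\to(S^m,g_m)$ is just Condition (a).

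\medskip
\textbf{Step 2: (ii) $\Rightarrow$ (iii).} With constant rank $p$ and constant nonzero singular value $\la$, the kernel distribution $\ker(f_*)$ is a smooth rank-$(n-p)$ distribution on $S^n$. I claim it is integrable with totally geodesic (hence great-subsphere) leaves and that $f$ factors as $f=i\circ\pi$ through the leaf space; this is where the constancy of $\la$ does the work — it says $\tfrac1\la f$ is a Riemannian submersion onto its image in the appropriate sense. The key point is that harmonicity of $f$ plus the composition formula \eqref{com1}–\eqref{com2} forces the fibers to be minimal submanifolds of $S^n$ and forces $i$ (with the rescaled metric $\la^2 h$ on the base $P$) to be a minimal immersion into $S^m$; Proposition \ref{ER} then identifies $\pi$ as a \emph{harmonic} Riemannian submersion. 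That the fibers are connected and that $P$ is a genuine manifold is where I would cite \cite{w}. Concretely: pull back $g_m$ by $f$; off the kernel it is $\la^2$ times a metric that descends to $P=S^n/\!\!\sim$, making $\pi:(S^n,g_n)\to(P,h)$ a Riemannian submersion with $h$ so normalized that $i:(P,\la^2h)\to(S^m,g_m)$ is isometric; harmonicity of $f$ splits via \eqref{com1} into vanishing of $\tau(\pi)$ (equivalently minimal fibers, by Proposition \ref{ER}) and vanishing of the mean curvature of $i(P)$ in $S^m$.

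\medskip
\textbf{Step 3: (iii) $\Rightarrow$ (i), and the formula for $\th$.} Given the factorization, compute $(f_*)_x$: on the horizontal space it is $\la$ times an isometry onto $T_{i(\pi(x))}i(P)$, on the vertical space it is zero. Hence the singular values are $0$ (multiplicity $n-p$) and $\la$ (multiplicity $p$) — so all nonzero singular values are equal, giving (i) once we confirm $f$ is actually an LOM. For that, and simultaneously for \eqref{th}, plug these singular values into Condition (b): we need
\begin{equation*}
(n-p)\cdot 1+p\cdot\f{1}{\cos^2\th+\sin^2\th\,\la^2}=n,
\end{equation*}
i.e. $\cos^2\th+\sin^2\th\,\la^2=\dfrac{p}{p}\cdot\big(\text{stuff}\big)$; solving $\cos^2\th+(1-\cos^2\th)\la^2 = \dfrac{p}{\,p-(n-p)\cdot 0\,}$ — carefully, $\dfrac{p}{n-(n-p)}$ — yields $\cos^2\th=\dfrac{n-p}{n(1-\la^{-2})}$, which is exactly \eqref{th}; the hypothesis $\la>\sqrt{n/p}$ is precisely what makes the right-hand side lie in $(0,1)$ so that an acute $\th$ exists. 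Condition (a) holds because $f=i\circ\pi$ is a composition of a harmonic Riemannian submersion and (after rescaling) an isometric minimal immersion, so \eqref{com1} gives $\tau(f)$ tangent to $i(P)$ and equal to $i_*\tau(\pi)+(\text{trace of }B_i)$; both terms vanish. Thus $f$ is an LOM, and the value of $\th$ is forced.

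\medskip
\textbf{Expected main obstacle.} The delicate step is Step 2: passing from ``constant rank $p$, constant nonzero singular value $\la$'' to an honest smooth factorization $f=i\circ\pi$ with $\pi$ a Riemannian submersion with connected fibers over a Riemannian \emph{manifold}. Local factorization is a constant-rank-theorem argument, but global well-definedness of the base, smoothness of the quotient metric, connectedness of fibers, and the identification with a Hopf fibration genuinely rely on the classification in \cite{w}; I expect the bulk of the proof's length and subtlety to sit here, whereas Steps 1 and 3 are essentially algebra with the singular-value identity of Theorem \ref{con}.
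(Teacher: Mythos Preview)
Your overall architecture (cycle (i)$\Rightarrow$(ii)$\Rightarrow$(iii)$\Rightarrow$(i), with the singular-value identity from Theorem~\ref{con} doing the algebra) matches the paper's. But there are two substantive gaps.

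\textbf{The metric mismatch.} Condition (a) of Theorem~\ref{con} is harmonicity of $f:(S^n,g)\to(S^m,g_m)$ with respect to the \emph{twisted} metric $g=I_{f,\th}^*g_{n+m+1}$, not the round metric $g_n$. In your Step~3 you verify $\tau(f)=0$ by composing a harmonic Riemannian submersion $\pi:(S^n,g_n)\to(P,h)$ with a minimal isometric immersion; that computation lives entirely in $g_n$ and does not give harmonicity with respect to $g$. The same issue appears in the reverse direction in your Step~2: the composition formula applied to the hypothesis of (ii) yields $\tau(\pi)=0$ with respect to $g$, but (iii) requires $\pi:(S^n,g_n)\to(P,h)$ to be harmonic. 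Since $n>2$ and $g,g_n$ are not even conformal, you cannot pass between these for free. The paper closes this gap with Lemma~\ref{lem3}: the two metrics agree up to a constant factor along the fibers and share the same horizontal distribution, so minimality of fibers (equivalently, by Proposition~\ref{ER}, harmonicity of the submersion) transfers from one metric to the other. You need this ingredient, or an equivalent one, in both Step~2 and Step~3.

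\textbf{The factorization.} You attribute the global factorization $f=i\circ\pi$ with connected fibers to Wilking's classification \cite{w}, but Wilking's theorem takes a Riemannian submersion with connected fibers as \emph{input} and identifies it as a Hopf fibration; it does not manufacture the submersion for you, nor does it supply connectedness. The paper instead proves a self-contained structural result, Lemma~\ref{lem2}: any smooth map of constant rank with singular values $0$ and $1$ from a compact connected manifold factors as a Riemannian submersion with connected fibers followed by an isometric immersion. The argument is a global constant-rank-theorem plus horizontal-lift construction, with the quotient $P$ defined as the space of connected fiber components equipped with the Hausdorff metric. This is exactly what you flagged as the ``expected main obstacle,'' and your instinct there is right; just note that \cite{w} is not the tool---it is invoked only later (Theorem~\ref{npk2}) to classify the possible $(n,p)$.
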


The proof of the theorem relies on the next two lemmas.

\begin{lem}\label{lem2}
Let $(\bar{N}^n,\bar{g}), (N^m,g)$ be Riemannian manifolds.
Assume $\bar{N}$ is connected and compact, and
$\phi:(\bar{N},\bar{g})\ra (N,g)$ a smooth map
with singular values 0 and 1 of multiplicities $(n-p)$ and $p$ pointwise.
Then there exist a Riemannian manifold $(P^p,h)$, a Riemannian submersion $\pi:(\bar{N},\bar{g})\ra (P,h)$ whose fibers are all connected, and an isometric
immersion $i:(P,h)\ra (N,g)$, such that $\phi=i\circ \pi$.
\end{lem}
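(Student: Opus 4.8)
The plan is to realize $(P,h)$ as the leaf space of the kernel foliation of $\phi$ and then to read off $\pi$ and $i$ from it. First I would observe that the hypothesis forces $\phi$ to have constant rank $p$, so $\mathcal{V}:=\ker(d\phi)$ is a smooth rank-$(n-p)$ distribution on $\bar{N}$; it is involutive because sections of $\mathcal{V}$ are $\phi$-related to the zero field and hence so are their brackets. By the Frobenius theorem $\mathcal{V}$ integrates to a foliation $\mathcal{F}$ whose leaves are the connected components of the fibers $\phi^{-1}(q)$; compactness of $\bar{N}$ makes every fiber, hence every leaf, compact. Writing $\mathcal{H}:=\mathcal{V}^{\perp}$ for the $\bar{g}$-orthogonal complement, the singular-value hypothesis says precisely that $d\phi_x|_{\mathcal{H}_x}$ is a linear isometry onto its image in $T_{\phi(x)}N$ for every $x$.

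The crux — and the step I expect to be the main obstacle — is promoting the leaf space $\bar{N}/\mathcal{F}$ to an honest smooth manifold, i.e. passing from the local constant-rank factorization to a global one. Here I would exploit that $\phi$ is constant on each connected leaf: if $\Sigma$ is a small transversal to $\mathcal{F}$ at $x_0\in L$ tangent to $\mathcal{H}_{x_0}$, then $\phi|_{\Sigma}$ is an embedding (its differential at $x_0$ is the injective map $d\phi_{x_0}|_{\mathcal{H}_{x_0}}$), while any holonomy germ $h_\gamma$ of $\mathcal{F}$ along a loop $\gamma\subset L$ satisfies $\phi\circ h_\gamma=\phi$ on $\Sigma$ since it slides points along leaves; injectivity of $\phi|_\Sigma$ then forces $h_\gamma=\mathrm{id}$. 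Thus $\mathcal{F}$ has trivial holonomy, and being a foliation of a compact manifold by compact leaves with trivial holonomy, Reeb's global stability theorem identifies it with a locally trivial fiber bundle projection $\pi:\bar{N}\to P:=\bar{N}/\mathcal{F}$ onto a smooth connected $p$-manifold; $\pi$ is then a surjective submersion whose fibers are exactly the (connected) leaves, so $\ker d\pi=\mathcal{V}$.

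Next I would put a metric on $P$ by declaring $d\pi_x|_{\mathcal{H}_x}$ an isometry onto $T_{\pi(x)}P$; the content is independence of the choice of $x$ inside a leaf $L$. For $x,x'\in L$ joined by a path $\gamma\subset L$, the comparison isomorphism $A:=(d\pi_{x'}|_{\mathcal{H}_{x'}})^{-1}\circ (d\pi_x|_{\mathcal{H}_x})$ is exactly the linear holonomy of $\mathcal{F}$ along $\gamma$ (because $\pi\circ h_\gamma=\pi$ on transversals), and running the identical computation with $\phi$ in place of $\pi$ yields $d\phi_{x'}|_{\mathcal{H}_{x'}}\circ A=d\phi_x|_{\mathcal{H}_x}$; since $d\phi_x|_{\mathcal{H}_x}$ and $d\phi_{x'}|_{\mathcal{H}_{x'}}$ are isometries onto the same $p$-dimensional subspace $W\subset T_qN$ (equality of images follows from the displayed identity and a dimension count, $q=\phi(L)$), $A=(d\phi_{x'}|_{\mathcal{H}_{x'}})^{-1}\circ(d\phi_x|_{\mathcal{H}_x})$ is an isometry. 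Hence $\bar{g}|_{\mathcal{H}}$ descends to a well-defined smooth metric $h$, and by construction $\pi:(\bar{N},\bar{g})\to(P,h)$ is a Riemannian submersion with connected fibers.

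Finally, since $\phi$ is constant on the fibers of $\pi$ it factors as $\phi=i\circ\pi$ for a unique map $i:P\to N$, which is smooth because $\pi$ is a surjective submersion and $i\circ\pi=\phi$ is smooth. From $di_{\pi(x)}\circ d\pi_x=d\phi_x$ together with the fact that both $d\pi_x|_{\mathcal{H}_x}$ and $d\phi_x|_{\mathcal{H}_x}$ are isometries onto their images, one gets $di_{\pi(x)}=\big(d\phi_x|_{\mathcal{H}_x}\big)\circ\big(d\pi_x|_{\mathcal{H}_x}\big)^{-1}$, an isometry onto its image, so $i:(P,h)\to(N,g)$ is an isometric immersion, which completes the construction. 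I expect the routine pieces (smoothness of $\mathcal{V}$, involutivity, the pushforward-metric bookkeeping) to be short; essentially all the real work is the holonomy-triviality argument and the appeal to stability that turns the leaf space into a manifold.
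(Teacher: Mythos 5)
Your proof is correct, and while it shares the paper's overall skeleton (realize $P$ as the space of connected components of the fibers, then descend the horizontal metric), it handles the one genuinely delicate step --- putting a smooth manifold structure on the leaf space --- by a different mechanism. The paper works metrically and by hand: it puts the Hausdorff distance on the set of fiber components, uses horizontal lifts of $\phi\circ\bar\xi$ along shortest geodesics to show distinct components are ``parallel'' diffeomorphic copies of each other, and then uses the constant rank theorem plus compactness to get a uniform $\delta$ so that $i$ is injective on small metric balls with embedded image, which furnishes the charts; the metric is then taken to be $h:=i^*g$, so that $i$ is isometric by fiat and one checks $\pi$ is a Riemannian submersion. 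You instead integrate $\ker(d\phi)$ to a compact-leaved foliation, kill the holonomy by observing that $\phi$ is injective on a horizontal transversal while holonomy transport preserves fibers, and invoke local Reeb stability (trivial-holonomy case) to get the bundle structure and Hausdorff manifold leaf space; you then define $h$ by declaring $\pi$ a Riemannian submersion and verify $i$ is isometric, using that the linear holonomy is intertwined with $d\phi|_{\mathcal H}$ and hence is an isometry. Your route is shorter and more structural but leans on a named theorem from foliation theory (and on the fact that trivial holonomy of all compact leaves forces the leaf space to be Hausdorff, which is worth stating explicitly since compact foliations in codimension $\geq 2$ can otherwise have non-Hausdorff leaf spaces); the paper's route is self-contained and produces the uniform tubular radius $\delta$ explicitly, which is the quantitative content hiding behind the stability theorem. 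One cosmetic remark: what you are using is the local Reeb stability theorem applied to every leaf rather than the classical ``global'' Reeb stability theorem (which concerns codimension-one foliations), so the citation should be phrased accordingly.
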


 We save its proof to Appendix \S \ref{App1}.

\begin{lem}\label{lem3}
Let $\{K_\a:\a\in \La\}$ be a smooth foliation of $d$-dimensional submanifolds in a manifold $M^n$,
where $\La$ is an index set.
Suppose $g,\td{g}$ are Riemannian metrics on $M$, satisfying:
\begin{enumerate}
\item[(a)] $\exists$ constant $\mu>0$, such that $\td{g}|_{K_\a}=\mu g|_{K_\a}$ for all $\a\in \La$;
\item[(b)] For every $\a\in \La$, $p\in K_\a$, $v\in T_p M$ and $w\in T_p K_\a$,\\
 $\td{g}(v,w)=0$ if and only if $g(v,w)=0$.
\end{enumerate}
Then $K_\a$ is minimal in $(M,\td{g})$ if and only if it is minimal in $(M,g)$.
\end{lem}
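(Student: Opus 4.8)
The plan is to reduce the minimality of a leaf $K_\alpha$ in either metric to the vanishing of its mean curvature vector, and then to show that the two mean curvature vectors, computed with respect to $g$ and $\td g$, are parallel (in fact scalar multiples of one another along $K_\alpha$). Once that is established, one vanishes if and only if the other does, which is exactly the claim. The two hypotheses are tailored to make this work: (a) says the induced metrics on each leaf differ by the \emph{constant} conformal factor $\mu$, so intrinsic Levi-Civita connections of the leaves agree; (b) says the $g$-orthogonal complement of $T_pK_\alpha$ coincides with the $\td g$-orthogonal complement, so ``the normal bundle of $K_\alpha$'' is the same subbundle for both metrics, even though the two inner products on it differ.

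First I would fix $p\in K_\alpha$ and choose a $g$-orthonormal frame $\{e_1,\dots,e_d\}$ of $T_pK_\alpha$. By (a), $\{\mu^{-1/2}e_1,\dots,\mu^{-1/2}e_d\}$ is a $\td g$-orthonormal frame of the same tangent space. Writing $\n$, $\bn$ for the Levi-Civita connections of $g$, $\td g$ on $M$, the mean curvature vector of $K_\alpha$ in $(M,g)$ is $\mathbf{H}=\sum_{i=1}^d \big(\n_{e_i}e_i\big)^{\perp_g}$, where $e_i$ is extended to a local vector field tangent to the leaves and $\perp_g$ denotes $g$-orthogonal projection onto the normal space; similarly $\td{\mathbf H}=\mu^{-1}\sum_{i=1}^d \big(\bn_{e_i}e_i\big)^{\perp_{\td g}}$. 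The tangential parts $\big(\n_{e_i}e_i\big)^{\top}$ and $\big(\bn_{e_i}e_i\big)^{\top}$ are the intrinsic connections of the leaf with respect to $g|_{K_\alpha}$ and $\td g|_{K_\alpha}=\mu\,g|_{K_\alpha}$; since rescaling a metric by a constant does not change its Levi-Civita connection, these two tangential parts agree. Hence $\bn_{e_i}e_i - \n_{e_i}e_i$ is, at $p$, a \emph{normal} vector (it has zero tangential component), the normal space being unambiguous by (b).

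Next I would pin down $\bn-\n$ on normal vectors using the standard Koszul-type formula for the difference tensor of two metrics. Abbreviating $T(X,Y):=\bn_XY-\n_XY$, which is symmetric and tensorial, one has for all $X,Y,Z$ the identity $2\,\td g(T(X,Y),Z)=(\n_X\td g)(Y,Z)+(\n_Y\td g)(X,Z)-(\n_Z\td g)(X,Y)$. I would take $X=Y=e_i$ tangent to the leaf and $Z=w$ an arbitrary vector in $T_pK_\alpha$, and show $\td g(T(e_i,e_i),w)=0$: the terms $(\n_{e_i}\td g)(e_i,w)$ involve differentiating $\td g$ along leaf directions and evaluating on leaf-plus-leaf inputs, where by (a) $\td g$ restricted to the leaf equals $\mu$ times $g$ restricted to the leaf, a fact that should let me rewrite each such term via the corresponding $(\n_{e_i}g)$-term, which vanishes because $\n g=0$; the remaining term $(\n_w\td g)(e_i,e_i)$ cancels against itself or is handled the same way. (Care is needed because $\n_{e_i}\td g$ and $\n_{e_i}g$ measure failure of \emph{parallelism} and $\td g - \mu g$ need not vanish off the leaf; the point is only that one needs the first derivatives along \emph{leaf} directions of $\td g$ evaluated on \emph{leaf} vectors, and those are determined by $\td g|_{K_\alpha}=\mu\,g|_{K_\alpha}$ together with hypothesis (b) which controls how the normal directions enter.) This will give $T(e_i,e_i)\perp_{\td g} T_pK_\alpha$, hence $T(e_i,e_i)$ is normal — consistent with the previous paragraph — and, crucially, it will let me relate the $g$-normal projection and the $\td g$-normal projection of $\n_{e_i}e_i$.

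Finally, assembling the pieces: by (b) the normal subspace $\nu_p:=(T_pK_\alpha)^{\perp_g}=(T_pK_\alpha)^{\perp_{\td g}}$ is the same; write $P_g,P_{\td g}$ for the respective projections $T_pM\to\nu_p$ along $T_pK_\alpha$ (these are \emph{different} linear maps since they project along different complements — wait, no: the complement $T_pK_\alpha$ is the \emph{same}, only the splitting $T_pM=T_pK_\alpha\oplus\nu_p$ is the same, so in fact $P_g=P_{\td g}=:P$, the projection onto $\nu_p$ along $T_pK_\alpha$, which does not reference either metric once both subspaces are fixed). Therefore $\td{\mathbf H}=\mu^{-1}\sum_i P(\bn_{e_i}e_i)=\mu^{-1}\sum_i P(\n_{e_i}e_i + T(e_i,e_i))=\mu^{-1}\sum_i P(\n_{e_i}e_i)=\mu^{-1}\mathbf H$, using that $T(e_i,e_i)$ is already normal so $P$ fixes it — no wait, I must not drop it; rather $P(\n_{e_i}e_i)=\mathbf H$ directly and $\sum_i P(T(e_i,e_i))=\sum_i T(e_i,e_i)$ which need not vanish. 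I would instead argue more carefully that the \emph{whole} normal component transforms correctly, or bypass the difference tensor: since the leaf metrics are homothetic the second fundamental forms satisfy $\mathrm{II}^{\td g}=\mathrm{II}^{g}$ as vector-valued forms into the common normal bundle (this is the clean statement, provable by the same Koszul computation as a one-liner), whence $\td{\mathbf H}=\mu^{-1}\,\mathbf H$ after tracing with the rescaled metric. In either formulation, $\mathbf H\equiv 0 \iff \td{\mathbf H}\equiv 0$, which is the desired equivalence.

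\medskip
\textbf{Main obstacle.} The delicate point is the bookkeeping in the Koszul/difference-tensor computation: $\td g$ and $g$ are related by $\td g|_{K_\alpha}=\mu\,g|_{K_\alpha}$ and by condition (b) on orthogonality, but \emph{not} globally conformal, so one must be scrupulous that every derivative of $\td g$ that appears is a derivative along a leaf direction applied to leaf-tangent arguments (where the homothety (a) applies) and that the normal directions enter only through the orthogonality relation (b). Getting that accounting exactly right — equivalently, proving cleanly that the second fundamental forms of $K_\alpha$ for $g$ and $\td g$ coincide as maps into the common normal bundle — is the crux; everything else is formal.
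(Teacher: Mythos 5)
Your strategy --- compare the normal components of $\n_{e_i}e_i$ and $\bar\nabla_{e_i}e_i$ via a Koszul-type computation --- is essentially the paper's, but the identity you ultimately rest the proof on is false, and this is a genuine gap rather than bookkeeping. Neither $\mathrm{II}^{\td g}=\mathrm{II}^{g}$ (as vector-valued forms into the common normal bundle) nor its consequence $\td{\mathbf H}=\mu^{-1}\mathbf H$ follows from (a)--(b): those hypotheses constrain $\td g$ only on leaf-tangent arguments and through the orthogonality relation, and leave $\td g$ entirely free on the normal bundle, whereas extracting the normal component of $\bar\nabla_{e_i}e_i$ requires pairing with normal vectors \emph{using $\td g$ there}. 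A counterexample: foliate $\R^2\setminus\{0\}$ by the circles $r=c$, take $g=dr^2+r^2d\th^2$ and $\td g=\vs(r)^2dr^2+\mu r^2d\th^2$ with $\vs>0$ non-constant. Conditions (a) and (b) hold, yet $\mathrm{II}^{g}(\partial_\th,\partial_\th)=-r\,\partial_r$ while $\mathrm{II}^{\td g}(\partial_\th,\partial_\th)=-\mu r\,\vs^{-2}\partial_r$, and $\td{\mathbf H}=\vs^{-2}\mathbf H\neq\mu^{-1}\mathbf H$. So the ``one-liner'' you defer the crux to is not available, and the first alternative you sketch (the difference-tensor computation) only re-proves that $T(e_i,e_i)$ is normal, which says nothing about whether that normal part vanishes.

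What the Koszul formula actually gives --- for $X,Y$ tangent to the leaves and $\nu$ normal, using that $g(X,\nu)\equiv\td g(X,\nu)\equiv 0$, that $[X,Y]$ is leaf-tangent, that $\td g(X,Y)=\mu\,g(X,Y)$ pointwise, and that $\td g(X,w)=\mu\,g(X,w)$ for arbitrary $w$ after splitting $w$ into tangential and normal parts via (b) --- is
\begin{equation*}
2\,\td g(\bar\nabla_XY,\nu)=-\nu\,\td g(X,Y)-\td g([X,\nu],Y)-\td g([Y,\nu],X)=2\mu\,g(\n_XY,\nu),
\end{equation*}
i.e.\ $\td g\big(\mathrm{II}^{\td g}(X,Y),\nu\big)=\mu\,g\big(\mathrm{II}^{g}(X,Y),\nu\big)$, and after tracing with a $\td g$-orthonormal frame, $\td g(\td{\mathbf H},\nu)=g(\mathbf H,\nu)$ for every normal $\nu$. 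This weaker, ``dual'' statement is all that is true, but it suffices: both mean curvature vectors lie in the common normal space guaranteed by (b), and a vector there vanishes iff its pairing with the whole normal space under either nondegenerate inner product vanishes. Replacing your claimed equality of second fundamental forms by this identity repairs the argument and brings it in line with the paper's proof, which establishes exactly $g(\mathbf H,\nu)=\sum_i g(E_i,[\nu,E_i])=\td g(\td{\mathbf H},\nu)$.
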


\begin{proof}
Let $\n$ be the Levi-Civita connection for $g$.
Then we have (e.g. see \S 2.3 of \cite{dC})
\begin{equation}\label{gn}\aligned
g(\n_X Y,Z)=&\f{1}{2}\Big\{\n_X g(Y,Z)+\n_Y g(Z,X)-\n_Z g(X,Y)\\
&+g(Y,[Z,X])+g(Z,[X,Y])-g(X,[Y,Z])\Big\}
\endaligned
\end{equation}
for vector fields $X,Y,Z$ on $M$.

With notations $B$ for the second fundamental form of $K_\a$ in $(M,g)$ and $\mathbf{H}$ the mean vector field,
we deduce from \eqref{gn} that
\begin{equation*}\aligned
&g(\mathbf{H},\nu)=\sum_{i=1}^d g(B(E_i,E_i),\nu)=\sum_{i=1}^d g(\n_{E_i}E_i,\nu)\\
=&\sum_{i=1}^d\f{1}{2}\Big\{2\n_{E_i}g(E_i,\nu)-\n_{\nu} g(E_i,E_i)+2g(E_i,[\nu,E_i])+g(\nu,[E_i,E_i])\Big\}\\
=&\sum_{i=1}^d g(E_i,[\nu,E_i]).
\endaligned
\end{equation*}
Here $\{E_1,\cdots,E_n\}$ is a local orthonormal tangent frame field on $U\subset M$,
such that for every $x\in U\cap K_\alpha$, $\{E_1(x),\cdots,E_d(x)\}$ forms an orthonormal basis
of $T_xK_\alpha$, and
in addition, $\nu$ is a vector field on $U$ that is orthogonal to leaves.

Similarly, with symbols $\td{B}$ and $\td{\mathbf{H}}$ for $\td{g}$,
we have
\begin{equation*}
\td{g}(\td{\mathbf{H}},\nu)=\sum_{i=1}^d \mu^{-1}\td{g}(\td{B}(E_i,E_i),\nu)=\mu^{-1}\sum_{i=1}^d \td{g}(E_i,[\nu,E_i])\
=\sum_{i=1}^d g(E_i,[\nu,E_i]).
\end{equation*}
Since $\nu$ is arbitrary, $\mathbf{H}=0$ if and only if $\td{\mathbf{H}}=0$.
\end{proof}

\renewcommand{\proofname}{\bf Proof of Theorem \ref{g2}}
\begin{proof}
Let $f:S^n\ra S^m$ be an LOM,
$M_{f,\th}$ the associated LOS in $S^{n+m+1}$,
$0$ and $\la>0$ the singular values of $(f_*)_x$ of multiplicities $(n-p)$  and $p$ respectively.
Then Condition (b) of Theorem \ref{con} implies
\begin{equation}\label{la}
\la=\sqrt{\f{n\cos^2\th}{p-n\sin^2\th}}\in \left(\sqrt{\f{n}{p}},+\infty\right).
\end{equation}
Since $\la$ varies continuously in $x$, both $\la$ and $p$ have to be constant on $S^n$.
Hence (i)$\Rightarrow$(ii) and (\ref{th}) follows immediately from (\ref{la}).

(ii) means $f:(S^n,g_n)\ra (S^m,\la^{-2}g_m)$
has singular values $0$ and $1$ of multiplicities $(n-p)$ and $p$.
By Lemma \ref{lem2}, there exist a Riemannian manifold $(P^p,h)$, a Riemannian submersion $\pi:(S^n,g_n)\ra (P,h)$
with connected fibers and an isometric immersion $i:(P,h)\ra (S^m,\la^{-2}g_m)$, such that $f=i\circ \pi$. To deduce (iii), it
suffices to show both $\pi$ and $i$ are harmonic.

By Condition (a) of Theorem \ref{con}, $f:(S^n,g)\ra (S^m,g_m)$ is harmonic. So is $f:(S^n,g)\ra (S^m,\la^{-2}g_m)$.
Moreover, (\ref{com2}) leads to
$$0=\tau(f)=\tau(i\circ \pi)=\tau(\pi)+\sum_{j=1}^n B(\pi_* e_j,\pi_* e_j),$$
 where $\{e_1,\cdots,e_n\}$ can be arbitrary orthonormal basis of the tangent plane of $(S^n,g)$ at the considered point and
 $B$ the second fundamental form of the immersed $(P,h)$ in $(S^m,\la^{-1}g_m)$.
 Observe that $\tau(\pi)$ and $\sum_{j=1}^n B(\pi_* e_j,\pi_* e_j)$ are
 tangent and normal vectors to $P$ respectively.
 Therefore, $\pi: (S^n,g)\ra (P,h)$ is harmonic, and
\begin{equation}\label{B1}
\sum_{j=1}^n B(\pi_* e_j,\pi_* e_j)=0.
\end{equation}

Assume $\la_1=\cdots=\la_p=\la$ and $\la_{p+1}=\cdots=\la_n=0$.
Let $\{\ep_1,\cdots,\ep_n\}$ and $\{e_1,\cdots,e_n\}$ be S-bases of
$(T_x S^n,g_n)$ and
$(T_x S^n,g)$ for $f$ accordingly.
Then $f=i\circ \pi$ implies that
 $\{\pi_* \ep_1,\cdots,\pi_* \ep_p\}$ gives an orthonormal basis of
$(T_{\pi(x)}P,h)$ and $\pi_* \ep_i=0$ for $p+1\leq i\leq n$
(i.e., $\ep_{p+1},\cdots,\ep_n$ are fiberwise).
Hence $\sum\limits_{j=1}^p B(\pi_* \ep_j,\pi_* \ep_j)=0$
and
by \eqref{basis3},
$i:(P,h)\ra (S^m,\la^{-2}g_m)$ is an isometric minimal immersion.

Next, we show $\pi: (S^n,g_n)\ra (P,h)$ is harmonic.
By the above,
both $\pi: (S^n,g)\ra (P,\mu^2 h)$ with $\mu:=(\cos^2\th+\sin^2\th\la^{2})^{-\f{1}{2}}$
and $\pi: (S^n,g_n)\ra (P,h)$ are Riemannian submersions.
Since $g$ and $g_n$ satisfy Conditions (a)-(b) of Lemma \ref{lem3},
together with Proposition \ref{ER} we gain the harmonicity of $\pi: (S^n,g_n)\ra (P,h)$
from that of $\pi$ w.r.t. $g$.
Thus, (ii)$\Rightarrow$(iii).

Finally, the proof of (iii)$\Rightarrow$(i) is quite similar to the idea of showing (ii)$\Rightarrow$(iii),
where one instead argues
that the minimality of fibers under $g_n$ also guarantees the minimality for $g$ based on Lemma \ref{lem3}.
\end{proof}

\bigskip

\subsection{LOMSEs of (n,p,k)-type} Furthermore, in conjunction with Theorem \ref{g2} and the spectrum theory of Laplacian operators,
we show the following
properties of LOMSEs.

\begin{thm}\label{npk}
Let $f:S^n\ra S^m$ be an LOMSE
with nonzero singular value $\la$ of multiplicity $p$.
Then there exists an integer $k\geq 2$,
such that:

\begin{itemize}

\item
For $i_m\circ f(x)= \big(f_1(x),\cdots,f_{m+1}(x)\big)$  in $\R^{m+1}$,
each {component} $f_i$ is a spherical harmonic function of degree $k$.
\item
$\la=\sqrt{\f{k(k+n-1)}{p}}.$
\item $M_{f,\th}$ is an LOS associated to $f$ if and only if
\begin{equation}\label{th2}
\th=\arccos\sqrt{\f{1-\f{p}{n}}{1-\f{p}{k(k+n-1)}}}.
\end{equation}
\end{itemize}
We call such $f$ an {\bf LOMSE of (n,p,k)-type}.
\end{thm}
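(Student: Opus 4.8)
The plan is to start from the structure theorem (Theorem \ref{g2}): write $f = i\circ\pi$ with $\pi:(S^n,g_n)\to(P,h)$ a harmonic Riemannian submersion with connected fibers, and $i:(P,\la^2 h)\to(S^m,g_m)$ an isometric minimal immersion. The key observation is Takahashi's theorem: an isometric immersion of a Riemannian manifold into a sphere $S^m\subset\R^{m+1}$ is minimal if and only if the position vector (equivalently, the components of the immersion) are eigenfunctions of the Laplace--Beltrami operator with eigenvalue equal to the dimension of the domain. Applying this to $i$ acting from $(P,\la^2 h)$, whose dimension is $p$, gives $\Delta_{\la^2 h}\,(i_m\circ i) = -p\,(i_m\circ i)$, hence $\Delta_h\,(i_m\circ i) = -p\la^2\,(i_m\circ i)$.

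\textbf{Pulling back to the sphere.} Next I would use the composition formula together with the fact that $\pi$ is a harmonic Riemannian submersion. For a harmonic Riemannian submersion, composing with $\pi$ commutes with the Laplacian in the sense that $\Delta_{g_n}(u\circ\pi) = (\Delta_h u)\circ\pi$ for any function $u$ on $P$; this follows from \eqref{com1}, since $\tau(\pi)=0$ and, the fibers being minimal and $\pi$ a Riemannian submersion, the second fundamental form term contributes exactly the $P$-Laplacian of $u$ (one checks this on an adapted orthonormal frame: horizontal directions push down to an orthonormal frame of $P$, vertical directions are killed). Therefore each component $f_i = (i_m\circ i)_i\circ\pi$ satisfies
\begin{equation}
\Delta_{g_n} f_i = -p\la^2\, f_i .
\end{equation}
By the spectral theory of the round sphere $S^n$, the eigenvalues of $-\Delta_{g_n}$ are precisely $k(k+n-1)$ for integers $k\geq 0$, with eigenspaces the spaces of degree-$k$ spherical harmonics. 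Hence $p\la^2 = k(k+n-1)$ for some integer $k\geq 0$, each $f_i$ is a spherical harmonic of degree $k$, and $\la = \sqrt{k(k+n-1)/p}$. To see $k\geq 2$: $k=0$ would force $f$ constant (impossible for a map to $S^m$), while $k=1$ would make each $f_i$ the restriction of a linear function, so $f$ would be the restriction of a linear map $\R^{n+1}\to\R^{m+1}$ mapping $S^n$ to $S^m$, i.e. an isometric totally geodesic embedding — contradicting nontriviality of the LOMSE. Finally, substituting $\la^2 = k(k+n-1)/p$ into \eqref{th} of Theorem \ref{g2} and simplifying yields \eqref{th2}.

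\textbf{Main obstacle.} The routine parts are Takahashi's theorem and the spherical harmonics spectrum; the one step deserving care is the identity $\Delta_{g_n}(u\circ\pi) = (\Delta_h u)\circ\pi$ for a harmonic Riemannian submersion. One must verify that the horizontal part of an orthonormal frame on $S^n$ projects to an orthonormal frame on $P$ (true because $\pi$ is a Riemannian submersion) and that the trace of the second fundamental form of the map $\pi$ over the vertical directions vanishes (true because the fibers are minimal, which by Proposition \ref{ER} is equivalent to harmonicity of $\pi$) — so the only surviving contribution in \eqref{com1} is $\sum_{j} B_{\pi_* e_j,\pi_* e_j}(u) = (\Delta_h u)\circ\pi$ summed over the horizontal frame. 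Once this is in place the theorem follows by direct substitution.
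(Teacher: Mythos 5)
Your proposal is correct and follows essentially the same route as the paper: decompose $f=i\circ\pi$ via Theorem \ref{g2}, show the coordinate functions of $i$ are eigenfunctions on $(P,h)$ with eigenvalue $\la^2 p$ (the paper derives this directly from the composition formula, which is just a proof of the Takahashi statement you invoke), pull back through the harmonic Riemannian submersion using exactly the adapted-frame computation you describe, and read off $k$ from the sphere's spectrum. The only cosmetic difference is your argument for $k\geq 2$ (ruling out $k=1$ by showing $f$ would be a linear, hence totally geodesic isometric, embedding) where the paper simply notes that $\la>\sqrt{n/p}$ from Theorem \ref{g2} forces $k(k+n-1)>n$; both are valid.
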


\renewcommand{\proofname}{\it Proof.}
\begin{proof}
By Theorem \ref{g2}, there exist a Riemannian manifold $(P^p,h)$, $\pi:S^n\ra P$ and $i:P\ra S^m$, such that $f=i\circ \pi$, $\pi:(S^n,g_n)\ra (P,h)$ is
a harmonic Riemannian submersion and $i:(P,h)\ra (S^m,\la^{-2}g_m)$ is an isometric minimal immersion.

For $y\in P$, by $\mathbf Y(y)$ we mean the position vector of $i(y)$ in $\R^{m+1}$.
Then $\mathbf Y=i_m\circ i\circ \mathbf{Id}$ where
$\mathbf{Id}$ is the identity map from $(P,h)$ to $(P,\la^2 h)$.
Since $i$ is an isometric minimal immersion and $\mathbf{Id}$ a totally geodesic map,
we have $\tau(i\circ \mathbf{Id})=0$ and thereby via the composition formula \eqref{com2} obtain
\begin{equation}\label{com3}\aligned
&\De_h(\mathbf Y)=\tau(\mathbf Y)=\tau(i_m\circ i\circ \mathbf{Id})
=\tau(i\circ \mathbf{Id})+\sum_{j=1}^p B_m\left((i\circ \mathbf{Id})_*e_j,(i\circ \mathbf{Id})_*e_j\right)\\
=&-\left(\sum_{j=1}^p \lan (i\circ \mathbf{Id})_*e_j,(i\circ \mathbf{Id})_*e_j\ran\right)\mathbf Y=-\left(\sum_{j=1}^p \la^2 h(e_j,e_j)\right)\mathbf Y
=-\la^2 p\cdot\mathbf Y.
\endaligned
\end{equation}
Here $\{e_1,\cdots,e_p\}$ is an orthonormal basis of $(T_y P,h)$.
For $\big(h_1(y),\cdots,h_{m+1}(y)\big):=\mathbf{Y}(y)$,
\eqref{com3} states precisely
\begin{equation}\label{lap_h}
\De_h(h_j)=-\la^2 p\cdot h_j,\ \ \ \ \text{for } 1\leq j\leq m+1.
\end{equation}

Coupling \eqref{com1} with \eqref{lap_h}, we get
\begin{equation}\aligned
\De_{g_n}(h_j\circ \pi)&=\tau(h_j\circ \pi)=(h_j)_*(\tau(\pi))+\sum_{j=1}^n B_{\pi_* \ep_j,\pi_* \ep_j}(h_j)\\
&=\sum_{j=1}^n \text{Hess}_h(h_j)(\pi_* \ep_j,\pi_* \ep_j)=\De_h(h_j)\circ \pi\\
&=-\la^2 p(h_j\circ \pi),
\endaligned
\end{equation}
where $\{\ep_1,\cdots,\ep_n\}$ is an orthonormal basis of $(T_x S^n,g_n)$,
such that $\{\pi_* \ep_1,\cdots,\pi_* \ep_p\}$ forms an orthonormal basis
of $(T_{\pi(x)}P,h)$ and $\pi_* \ep_{p+1}=\cdots=\pi_* \ep_n=0$.
In other words,
\begin{equation}
\De_{g_n}f_j=-\la^2 p\cdot f_j\qquad \forall 1\leq j\leq m+1.
\end{equation}

The theory of eigenvalues of the Laplacian on Euclidean spheres
confirms the existence of a positive integer $k$ so that
$f_j$ is a spherical harmonic function of degree $k$ (see \S II.4 of \cite{c})
and $\la^2 p=k(k+n-1)$, i.e.,
\begin{equation}\label{sing1}
\la=\sqrt{\f{k(k+n-1)}{p}}.
\end{equation}
Moreover, $\la>\sqrt{\f{n}{p}}$ forces $k\geq 2$.
Finally, \eqref{sing1} and \eqref{th} give (\ref{th2}).
\end{proof}

Based on Theorem \ref{npk}, several geometric quantities of LOSs or LOCs for LOMSEs of $(n,p,k)$-type
can be expressed explicitly. See Appendix \S \ref{App2} for the details.

\begin{cor}\label{cor2}
Let $f$ be an LOMSE of $(n,p,k)$-type, $M_{f,\th}$ and $C_{f,\th}$ the corresponding LOS and LOC. Then
\begin{enumerate}
\item[(A)] All normal planes of $M_{f,\th}$ make a constant acute angle $\a_{n,p,k}$ to a preferred reference plane $Q_0$ (see \S\ref{App2}), with
\begin{equation}\label{angle}
\cos\a_{n,p,k}=\sqrt{\f{1-\f{p}{n}}{1-\f{p}{k(k+n-1)}}}\cdot\left(\f{n-p}{k(k+n-1)-p}\right)^{\f{p}{2}}.
\end{equation}
\item[(B)] The volume of $M_{f,\th}$ is
\begin{equation}\label{volume}
V_{n,p,k}=\left(\f{k(k+n-1)}{n}\right)^{\f{p}{2}}\left(\f{1-\f{p}{n}}{1-\f{p}{k(k+n-1)}}\right)^{\f{n-p}{2}}\om_n,
\end{equation}
where $\om_n$ stands for the volume of $n$-dimensional unit Euclidean sphere.
\item[(C)] $C_{f,\th}$ is an entire minimal graph with constant Jordan angles relative to $Q_0$. The tangent Jordan angles of
$C_{f,\th}$ are
\begin{equation}\label{JA}
\arccos\sqrt{\f{n-p}{k(k+n-1)-p}},\quad\arccos\sqrt{\f{1-\f{p}{n}}{1-\f{p}{k(k+n-1)}}},\quad 0,
\end{equation}
of multiplicities $p,1,n-p$ respectively. The slope function of $C_{f,\th}$ is identically equal to $W_{n,p,k}:=\sec \a_{n,p,k}$.
\end{enumerate}

\end{cor}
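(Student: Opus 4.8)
The plan is to reduce all three assertions to explicit linear algebra at a single point $x\in S^n$, using the S-bases from the proof of Theorem \ref{con}. First I would record, from Theorems \ref{g2} and \ref{npk}, the data of an LOMSE of $(n,p,k)$-type. Writing $N:=k(k+n-1)$, one has $\la^2=N/p$, while \eqref{th2} gives
\begin{equation*}
\cos^2\th=\f{N(n-p)}{n(N-p)},\qquad \sin^2\th=\f{p(N-n)}{n(N-p)},
\end{equation*}
and hence the single identity
\begin{equation*}
\cos^2\th+\sin^2\th\,\la^2=\f{N}{n}.
\end{equation*}
This identity is what makes all the square roots below collapse, and it should be verified at the outset.

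For (B): pick S-bases $\{\ep_1,\cdots,\ep_n\}$ of $(T_xS^n,g_n)$ and $\{e_1,\cdots,e_n\}$ of $(T_xS^n,g)$ adapted to $f$, with $\la_1=\cdots=\la_p=\la$ and $\la_{p+1}=\cdots=\la_n=0$; then $e_j=(\cos^2\th+\sin^2\th\,\la_j^2)^{-1/2}\ep_j$ by \eqref{basis3}, so the volume elements of $g$ and $g_n$ differ by the constant factor $\prod_{j=1}^n(\cos^2\th+\sin^2\th\,\la_j^2)^{1/2}=(N/n)^{p/2}(\cos^2\th)^{(n-p)/2}$. As $M_{f,\th}=I_{f,\th}(S^n)$ is isometric to $(S^n,g)$, integration over $S^n$ gives $V_{n,p,k}=(N/n)^{p/2}(\cos^2\th)^{(n-p)/2}\,\om_n$, which is \eqref{volume} after substituting the value of $\cos^2\th$.

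For (A) and (C): in terms of the S-basis I would write down explicit orthogonal frames at $I_{f,\th}(x)$ for (i) the tangent space $T_{I_{f,\th}(x)}M_{f,\th}$, spanned by the $(\cos\th\,\ep_j,\sin\th\,f_*\ep_j)$; (ii) the normal space of $M_{f,\th}$ in $S^{n+m+1}$, which is also the normal space of $C_{f,\th}$ in $\R^{n+m+2}$ along the generating ray, spanned by $(\sin\th\,x,-\cos\th\,f(x))$, the $p$ vectors $(\sin\th\,\la\,\ep_j,-\cos\th\,\la^{-1}f_*\ep_j)$ with $1\le j\le p$, and the $m-p$ vectors $(0,\ze)$ with $\ze$ in the normal space of $f$ at $x$; and (iii) the tangent space $T_{r\,I_{f,\th}(x)}C_{f,\th}$, spanned by the radial vector $(x,\tan\th\,f(x))$, the $(\ep_j,\tan\th\,f_*\ep_j)$ with $1\le j\le p$, and the $(\ep_j,0)$ with $p<j\le n$. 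The crucial observation is that the S-basis relations $\lan f_*\ep_j,f_*\ep_k\ran=\la^2\de_{jk}$ together with $f_*\ep_j\perp f(x)$ force the orthogonal projections of the frame (ii) onto the reference plane $Q_0$ of \S\ref{App2} (the vertical $(m+1)$-plane), and of the frame (iii) onto the horizontal $\R^{n+1}$, to remain mutually orthogonal; hence one directly obtains bi-orthogonal systems realizing the Jordan-angle decompositions. Reading off the principal angles between the paired directions yields, in both cases, the angles $0$ (multiplicity $n-p$), $\th$ (multiplicity $1$) and $\arccos\sqrt{(n-p)/(N-p)}$ (multiplicity $p$), which is exactly \eqref{JA}; multiplying cosines gives $\cos\a_{n,p,k}=\cos\th\,\big((n-p)/(N-p)\big)^{p/2}$, i.e. \eqref{angle}, and multiplying secants gives the slope function $W_{n,p,k}=\sec\th\,\big((N-p)/(n-p)\big)^{p/2}=\sec\a_{n,p,k}$. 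Everything in sight depends only on $(n,p,k)$, so the angles are constant along $M_{f,\th}$ and $C_{f,\th}$; and $C_{f,\th}$ is an entire graph since $F_{f,\th}$ in \eqref{cone-graph} is defined on all of $\R^{n+1}$.

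The main obstacle is organizational rather than conceptual: one must keep careful track of whether a computation lives in $S^{n+m+1}$, in the ambient $\R^{n+m+2}$, or in the base $P$, and---most delicately---must check that the frames above are genuinely bi-orthogonal, so that they compute the Jordan angles rather than merely bound them. The single genuine coincidence that makes the formulas clean is the identity $\cos^2\th+\sin^2\th\,\la^2=N/n$ noted above; once it is in hand, the rest is elementary simplification.
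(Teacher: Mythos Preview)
Your proposal is correct and essentially parallel to the paper's own argument: part (B) is identical (volume ratio via the S-basis), and part (C) is also the same (explicitly identifying $\mathbf X, E_1,\dots,E_n$ as angle directions of $T_yC_{f,\th}$ relative to $Q_0^\perp$). The only methodological difference is in (A): the paper computes $\cos\a$ in one stroke via the Hodge-star/determinant identity
\[
\cos\a=\lan \nu_1\w\cdots\w\nu_{m+1},\,\ep_{n+2}\w\cdots\w\ep_{n+m+2}\ran
=\lan \mathbf X\w E_1\w\cdots\w E_n,\,\mathbf Y_1\w\ep_1\w\cdots\w\ep_n\ran,
\]
whereas you instead write down an explicit orthogonal frame for the normal $(m+1)$-plane, check that its projection onto $Q_0$ remains orthogonal (so the frame is genuinely bi-orthogonal), read off the individual Jordan angles, and then multiply cosines. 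Both routes land on $\cos\a=\cos\th\cdot\big((n-p)/(k(k+n-1)-p)\big)^{p/2}$; your version has the mild bonus of exhibiting the Jordan-angle decomposition of the normal plane explicitly, while the paper's Hodge-star computation is slightly quicker since it bypasses the need to verify bi-orthogonality.
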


\textbf{Remarks.}
\begin{itemize}
\item By Theorem \ref{npk2} and (A) of Corollary \ref{cor2}, the three original LOMs are LOMSEs of $(2m-1,2m,2)$-type, for $m=2,4,8$.
They are compact minimal submanifolds in spheres whose normal planes make constant angles $\a_{3,2,2}$, $\a_{7,4,2}$ and $\a_{15,8,2}$ to preferred reference planes respectively,
with $\cos\a_{3,2,2}=\f{1}{9}$, $\cos\a_{7,4,2}=\f{1}{8\sqrt{7}}$
and $\cos\a_{15,8,2}=7^4\cdot 2^{-11}\cdot 3^{-5}\sqrt{\f{7}{5}}$, as pointed out by Lawson-Osserman \cite{l-o}.
\item It was shown by E. Calabi \cite{ca} that the area of all compact minimal surfaces in spheres that is homeomorphic to $S^2$ has to be an integral of $2\pi$.
For higher dimensional cases, the existence of the gap between the volume of the totally geodesic subsphere and the volumes of other compact minimal submanifolds in spheres was discovered by Cheng-Li-Yau \cite{c-l-y}. It is natural for us to ask whether the volumes of compact minimal submanifolds
in a Euclidean sphere take values in a discrete set. Besides the work of Perdomo-Wei
\cite{p-w} on minimal rotational hypersurfaces, (B) gives a positive evidence to support this
conjecture from another viewpoint.
\item The \textit{Jordan angles} between two $d_2$-planes $P$ and $Q$ in $\R^{d_1+d_2}$ are the
critical values of the angles between the nonzero vectors $u$ in $P$ and their orthogonal projection $u^*$ in $Q$. This concept was first introduced by
C. Jordan \cite{j}. Given a submanifold $M^{d_1}$ in $\R^{d_1+d_2}$, if the Jordan angles between all normal planes of $M$ and a fixed reference plane
are constant, $M$ is called a {\it submanifold with constant Jordan angles (CJA)} (see \cite{j-x-y5}). (C) tells that, although
the LOCs derived from LOMSEs (which are all submanifolds
with CJA) are uncountable infinite (see Theorem \ref{npk2} and the remarks on it), their constant slope functions take values in a discrete set. This gives a partial positive answer to Problem 1.1 in \cite{j-x-y5}.
\end{itemize}


Let $f_1:S^{n}\ra S^{m_1}$, $f_2:S^{n}\ra S^{m_2}$ be nontrivial LOMs and $m_1\leq m_2$. If
 there exist an isometry $\chi:(S^n,g_n)\ra (S^n,g_n)$ and a totally geodesic isometric embedding $\psi:(S^{m_1},g_{m_1})\ra (S^{m_2},g_{m_2})$,
 such that the following diagram commutes
$$\CD
 S^n @>\chi>> S^n  \\
 @Vf_1VV     @VVf_2 V \\
 S^{m_1}  @>\psi>> S^{m_2}
\endCD$$
then $f_1$ and $f_2$ are said to be \textit{equivalent}.
By the virtue of structure theorems on Riemannian submersions from Euclidean spheres and minimal immersions into Euclidean spheres,
we obtain a classification theorem for LOMSEs.

\begin{thm}\label{npk2}
Let $\mathcal{F}_{n,p,k}$ be the set of all equivalence classes of $(n,p,k)$-type LOMSEs.
Then $\mathcal{F}_{n,p,k}$ is nonempty if and only if $k$ is a positive even integer and $(n,p)=(15,8)$, $(2l+1,2l)$ or $(4l+3,4l)$ for some positive integer $l$.
Moreover,

\begin{itemize}

\item If $(n,p)=(2l+1,2l)$, there exists a $1:1$ correspondence between $\mathcal{F}_{2l+1,2l,k}$ and the set of equivalence classes of full isometric minimal immersions (see \cite{c-w} for definitions of `equivalence' and `full')
of $(\mathbb{CP}^{l},\frac{k(k+2l)}{2l}g_{FS})$ into unit Euclidean spheres,
where $g_{FS}$ is the Fubini-Study metric.

\item If $(n,p)=(4l+3,4l)$, there exists a $1:1$ correspondence between $\mathcal{F}_{4l+1,4l,k}$ and the set of equivalence classes of full isometric minimal immersions
of $(\mathbb{HP}^{l},\frac{k(k+4l+2)}{4l}g_{ST})$ into unit Euclidean spheres, where  $g_{ST}$  is the standard metric on $\mathbb{HP}^l$ (see \S 3.2 of \cite{b-f-l-p-p} for details).

\item If $(n,p)=(15,8)$, there exists a $1:1$ correspondence between $\mathcal{F}_{15,8,k}$ and the set of equivalence classes of full isometric minimal immersions of
$(S^{8},\frac{k(k+14)}{32}g_{8})$ into unit Euclidean spheres.
\end{itemize}
\end{thm}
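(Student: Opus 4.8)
The plan is to combine the structure theorem (Theorem \ref{g2}) with the two deep classification results quoted in the introduction: Wilking's theorem \cite{w} classifying harmonic (equivalently, minimal-fiber) Riemannian submersions of round spheres, and the moduli-space description of isometric minimal immersions of compact rank-one symmetric spaces into round spheres (DoCarmo--Wallach type theory, \cite{c-w,oh,u,to,to2}). First I would unpack what Theorem \ref{g2}(iii) says about an $(n,p,k)$-type LOMSE $f$: it factors as $f=i\circ\pi$ where $\pi:(S^n,g_n)\to(P^p,h)$ is a harmonic Riemannian submersion with connected fibers and $i:(P,\lambda^2 h)\to(S^m,g_m)$ is an isometric minimal immersion, with $\lambda^2=k(k+n-1)/p$ by Theorem \ref{npk}. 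The first step is therefore to invoke the classification of such $\pi$: a Riemannian submersion of a round sphere with connected totally geodesic fibers is (up to isometry of the base) a Hopf fibration, so $(S^n,g_n)\to(P,h)$ is forced to be one of $S^{2l+1}\to\mathbb{CP}^l$, $S^{4l+3}\to\mathbb{HP}^l$, or $S^{15}\to S^8(=\mathbb{OP}^1)$, with $h$ the standard (Fubini--Study / quaternionic / round) metric up to scale. This immediately pins down the admissible pairs $(n,p)=(2l+1,2l),(4l+3,4l),(15,8)$ and dictates the metric on $P$; one only has to check that the harmonicity of $\pi$ (minimality of fibers) is automatic for Hopf fibrations, which it is since the fibers are great subspheres.

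The second step is to reconcile the scale. Given $\pi$ fixed as a Hopf fibration, the factor $i$ must be an isometric minimal immersion of $(P,\lambda^2 h)$ into some unit sphere, and by Theorem \ref{npk} the scaling constant is exactly $\lambda^2=k(k+n-1)/p$; substituting the three values of $(n,p)$ gives the three rescaled metrics $\tfrac{k(k+2l)}{2l}g_{FS}$, $\tfrac{k(k+4l+2)}{4l}g_{ST}$, $\tfrac{k(k+14)}{32}g_8$ appearing in the statement. Then I would recall the basic fact from the DoCarmo--Wallach theory: a compact rank-one symmetric space $(P,h_0)$ admits a full isometric minimal immersion into a unit sphere with the rescaled metric $c\,h_0$ precisely when $c$ equals $\lambda_q/\dim P$ for some positive eigenvalue $\lambda_q$ of the Laplacian of $(P,h_0)$ (equivalently, $q$ is an admissible "degree"); the constraint "$k$ is a positive even integer" then comes from matching $k(k+n-1)$ against the eigenvalue spectrum of $\mathbb{CP}^l$, $\mathbb{HP}^l$ or $S^8$ under the Hopf pullback — the odd spherical harmonics on $S^n$ do not descend to $P$, so only even $k$ survive. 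This is the content already flagged in Theorem \ref{npk} ("harmonic spherical functions of degree $k\geq 2$") together with the parity constraint; I would spell out the eigenvalue bookkeeping for each of the three fibrations.

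The third and final step is the bijection on equivalence classes. Here I would show the two notions of equivalence match up: two $(n,p,k)$-type LOMSEs $f_1=i_1\circ\pi_1$, $f_2=i_2\circ\pi_2$ are equivalent (via $\chi$ on the source and $\psi$ totally geodesic on the target) if and only if the minimal immersions $i_1,i_2$ are equivalent in the DoCarmo--Wallach sense (differ by an isometry of the domain and a totally geodesic embedding of the codomain sphere). The forward direction uses that $\chi$ must descend through the Hopf fibration to an isometry of $P$ — this needs the rigidity of Hopf fibrations, namely any isometry of $S^n$ preserving the fibration induces one of $\mathbb{CP}^l$ (resp. $\mathbb{HP}^l$, $S^8$) — while the backward direction is direct composition. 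The main obstacle I anticipate is precisely this last rigidity/well-definedness point: one must be careful that the factorization $f=i\circ\pi$ is unique up to the appropriate automorphisms (Lemma \ref{lem2} gives existence of the submersion but the base and the submersion are only unique up to isometry of $P$), and that Wilking's classification is being applied to the correct category (harmonic Riemannian submersions with connected fibers, which by Proposition \ref{ER} have minimal hence — on a sphere — totally geodesic fibers). Once the dictionary between "LOMSE up to equivalence" and "pair (Hopf fibration, minimal immersion of the rescaled base) up to equivalence" is established, and the Hopf fibration is uniquely determined by $(n,p)$, the asserted $1:1$ correspondences with the three moduli spaces of full isometric minimal immersions follow, together with the nonemptiness criterion.
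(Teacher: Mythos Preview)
Your proposal is correct and follows essentially the same route as the paper: factor via Theorem \ref{g2}, apply Wilking's classification of Riemannian submersions from round spheres with connected fibers to pin down $(n,p)$ and the Hopf fibration, then identify the remaining freedom with the moduli of full isometric minimal immersions of the rescaled base. Two small points: first, your parenthetical ``minimal hence --- on a sphere --- totally geodesic fibers'' is false in general (minimal submanifolds of spheres need not be totally geodesic), but it is also unnecessary, since Wilking's theorem \cite{w} classifies \emph{all} Riemannian submersions from $S^n$ with connected fibers, not only those with totally geodesic fibers; second, the paper obtains the parity constraint on $k$ more directly than via eigenvalue matching: since every Hopf fibration satisfies $\pi(x)=\pi(-x)$, one has $f(x)=f(-x)$, and the coordinate functions of $f$ being degree-$k$ spherical harmonics forces $k$ even.
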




\begin{proof}
 By Theorem \ref{g2}, an LOMSE $f$ can be written as $f=i\circ \pi$, where
 $\pi:(S^n,g_n)\longrightarrow (P,h)$ is a harmonic Riemannian submersion of connected fibers and
 $i:(P,\lambda^2h)\longrightarrow(S^m,g_m)$ is an isometric minimal immersion. B. Wilking's classification theorem \cite{w}
states
that all Riemannian submersions from unit Euclidean spheres with connected fibers are exactly the Hopf fibrations:
$(S^{2l+1},g_{2l+1})\longrightarrow (\mathbb{CP}^l,g_{FS})$,
$(S^{4l+3},g_{4l+3})\longrightarrow (\mathbb{HP}^l,g_{ST})$ and
$(S^{15},g_{15})\longrightarrow(S^8,\frac{1}{4}g_8)$.
Therefore,
the set of all equivalence classes of $(n,p,k)$-type LOMSEs corresponds to the set of equivalence classes of full isometric minimal immersions
from $(\mathbb{CP}^l,\frac{k(k+2l)}{2l}g_{FS})$ (when $(n,p)=(2l+1,2l)$),
$(\mathbb{HP}^l,\frac{k(k+4l+2)}{4l}g_{ST})$ (when $(n,p)=(4l+3,4l)$) or $(S^8,\frac{k(k+14)}{32}g_{8})$ (when $(n,p)=(15,8)$), into unit Euclidean spheres. 
By Theorem \ref{npk}, the coordinate functions of $f$ in $\R^{m+1}$ are all spherical harmonic polynomials of degree $k$. Since
$f=i\circ \pi$ and $\pi(x)=\pi(-x),x\in S^n$, $k$ has to be even for $\mathcal{F}_{n,p,k}$ being nonempty.

Given $(n,p)=(2l+1,2l)$ and $k=2\kappa$, where $l,\k\in \Bbb{Z}^+$, we will show $\mathcal{F}_{n,p,k}$ is nonempty. The similar argument holds for the other cases. Let $V_{\kappa}$ be the eigenspace of the Laplace-Beltrami operator of $(\mathbb{CP}^l,\frac{k(k+2l)}{2l}g_{FS})$ corresponding to the $\kappa$-th eigenvalue. It is known (see e.g. \S III.C of \cite{b-g-m}) that $V_\kappa$ is nonempty, and the elements in $V_\kappa$ are $S^1$-invariant spherical polynomials of degree $2\kappa$ on $S^{2l+1}$. Choosing an orthonormal basis $\{f_1,\ldots, f_{m+1}\}$ of $V_\kappa$ w.r.t. the $L^2$-inner product of a normalized measure defined in \cite{c-w,u}, then by Takahashi's Theorem \cite{ta} we know that the isometric immersion $i:(\mathbb{CP}^l,\frac{k(k+2l)}{2l}g_{FS})\longrightarrow S^m$, $x\mapsto (f_1(x),\ldots, f_{m+1}(x))$ is minimal.
This is called the \textit{standard minimal immersion} in \cite{c-w,u}.
Combining Theorems \ref{g2} and \ref{npk} implies that $f:=i\circ \pi$ is a LOMSE of $(2l+1,2l,k)$-type.
Such an LOMSE will be called a \textbf{standard
LOMSE} in the sequel. This completes the proof.
\end{proof}

{\bf Remarks.}
\begin{itemize}
\item
For $m=2$, $4$ or $8$, the Hopf map $H^{2m-1,m}$ is just the standard LOMSE of $(2m-1,m,2)$-type. From such observation, we construct all the standard LOMSEs of $(2l+1,2l,2)$, $(4l+3,4l,2)$-type in
\cite{x-y-z}.
By the rigidity theorems proved by E. Calabi \cite{ca}, do Carmo-Wallach \cite{c-w}, N. Wallach \cite{wa}, K. Mashimo \cite{ma1,ma2}
and Ohnita \cite{oh},
our construction exhausts all LOMSEs of $(2l+1,2l,2)$, $(4l+3,4l,2)$-type.
We also show that their corresponding LOCs are area-minimizing therein.

\item
In conjunction with Theorem \ref{npk2} and the structure theorems for minimal immersions
from symmetric spaces into spheres done by do Carmo-Wallach \cite{c-w}, Wallach \cite{wa} and Urakawa \cite{u},
$\mc{F}_{n,p,k}$ can be smoothly parameterized by a convex body $L$ in a vector space $W_2$.
Based on the works of do Carmo-Wallach \cite{c-w}, G. Toth \cite{to} and H. Urakawa  \cite{u}, some partial estimates of $\dim W_2$ 
are given as follows:
 $\dim W_2\geq 18$ for $(n,p)=(7,4)$ or $(15,8)$ and $k\geq 8$;
 $\dim W_2\geq 91$ for $(n,p)=(2l+1,2l)$, $l\geq 2$, $k\geq 8$ and $\dim W_2\geq 29007$ for $(n,p)=(11,8)$, $k\geq 8$.

\end{itemize}

\bigskip\bigskip

\Section{On Dirichlet problems related to LOMSEs}{On Dirichlet problems related to LOMSEs}

\subsection{Necessary and sufficient conditions for minimal graphs}

Given smooth $f:S^n\ra S^m$ and smooth $\rho:U\subset (0,\infty)\ra \R$,
in this subsection we shall focus on
the question
when the submanifold $M_{f,\rho}$ in $\R^{n+m+2}$ of form \eqref{frho1}
 is minimal.

Let $g$ be the induced metric on $M_{f,\rho}$
and $h_r:=I_r^*g$ for $r\in U$ where
$I_r:S^n\ra M_{f,\rho}$
\begin{equation}
x\mapsto (rx,\rho(r)f(x)).
\end{equation}
Then $(S^n,h_r)$ is an isometric embedded Riemannian submanifold in $(M_{f,\rho},g)$.
There are two smooth functions $(rx,\rho(r)f(x))\mapsto r$ and $(rx,\rho(r)f(x))\mapsto \rho(r)$ on $M_{f,\rho}$,
and we name them briefly
$r$ and $\rho$.
From now on
we use the symbol $\n$ for the Levi-Civita connection on $(M_{f,\rho},g)$.
Then obviously 
$\n_v r=\n_v \rho=0$ for any $v\in TI_{r_0}(S^n)$.

We derive the following minimality characterization for $M_{f,\rho}$ in terms of $r$ and $\rho$.

\begin{thm}\label{min_graph1}
Assume the above function $\rho>0$.
Then $(M_{f,\rho},g)$ is minimal in $\R^{n+m+2}$ if and only if the following two conditions hold:
\begin{enumerate}
\item[(a)]
For each $r\in U$,
$f:(S^n,h_r)\ra (S^m,g_m)$ is harmonic.
\item[(b)] For each $r\in U$,
$\De_g \rho-2\rho\cdot e(f)=0$ pointwise in $I_r(S^n)$ where $e(f)$ is the energy density of $f:(S^n,h_r)\ra (S^m,g_m)$.
\end{enumerate}

Moreover, Condition (b) has an equivalent description in terms of
singular values $\la_1,\cdots,\la_n$ of $(f_*)_x: (T_x S^n,g_n)\ra
(T_{f(x)}S^m,g_m)$, and that is
\begin{equation}\label{ODE}
\f{\rho_{rr}}{1+\rho_r^2}+\sum_{i=1}^n \f{\f{\rho_r}{r}-\f{\la_i^2 \rho}{r^2}}{1+\f{\la_i^2\rho^2}{r^2}}=0.
\end{equation}

\end{thm}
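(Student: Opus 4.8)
The plan is to mimic, almost line by line, the argument used for Theorem \ref{con}, but now with the metric $g$ on $M_{f,\rho}$ playing the role that $g$ on $S^n$ played there. The key structural observation is that $M_{f,\rho}$ is swept out by the hypersurfaces $I_r(S^n)$, and that on $M_{f,\rho}$ the ambient position vector in $\R^{n+m+2}$ is $\mathbf{X}(x,r)=(rx,\rho(r)f(x)) = (r\mathbf Y_1(x),\rho(r)\mathbf Y_2(x))$, with $\mathbf Y_1 = i_n\circ\mathbf{Id}$ and $\mathbf Y_2 = i_m\circ f$ as before. Since $M_{f,\rho}$ is a submanifold of Euclidean space, $\mathbf X$ is minimal if and only if $\Delta_g\mathbf X = 0$ (harmonic coordinate functions), so the whole problem reduces to computing $\Delta_g\mathbf X$ and setting it to zero.

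First I would choose, at a point $(x,r)$, a convenient orthonormal frame for $(T M_{f,\rho},g)$: take the S-basis $\{\ep_1,\dots,\ep_n\}$ of $(T_xS^n,g_n)$ for $f$ with respect to $\la_1,\dots,\la_n$, rescale to get an $h_r$-orthonormal basis $e_1,\dots,e_n$ tangent to $I_r(S^n)$ (exactly the rescaling in \eqref{basis3}, but with $\cos\th,\sin\th$ replaced by $1,\rho/r$ and an overall factor $1/\sqrt{r^2+\la_i^2\rho^2}\cdot r$ — to be pinned down), and adjoin the unit normal direction $e_0$ to the slices, which is the $g$-normalization of $\partial/\partial r$ along the curve $r\mapsto(rx,\rho(r)f(x))$; its ambient length squared is $r_{\text{(ambient)}}'{}^2 = |x|^2 + \rho_r^2|f(x)|^2 = 1+\rho_r^2$. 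Then $\Delta_g\mathbf X = \sum_{i=0}^n \big(\nabla^{\R}_{e_i}\nabla^{\R}_{e_i}\mathbf X - \nabla^{\R}_{\nabla^g_{e_i}e_i}\mathbf X\big)$; but $\nabla^{\R}_{e_i}\mathbf X = e_i$ viewed as an ambient vector, so $\Delta_g\mathbf X$ is just the trace of the second fundamental form in $\R^{n+m+2}$, i.e. the mean curvature vector $\mathbf H$. The cleaner route, matching the paper's style, is to compute $\Delta_g\mathbf X$ by the composition-formula trick applied separately to the "$S^n$-direction" and the "$r$-direction."

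The core computation splits $\Delta_g$ as the tangential Laplacian along the slice plus the normal second-order part. Along each slice $I_r(S^n)$, applying \eqref{com2} with the isometric embedding $i_n$ (resp. $i_m$) to $\mathbf Y_1$ (resp. $\mathbf Y_2$) as in \eqref{y1}–\eqref{y2} — but using the metric $h_r$ — gives the tangential contribution: it produces $\tau_{h_r}(\mathbf{Id})$ and $\tau_{h_r}(f)$ terms plus energy-density times position terms. In the normal direction, differentiating $\mathbf X = (r\mathbf Y_1,\rho\mathbf Y_2)$ twice along the flow parameter and accounting for the $g$-normalization of $\partial_r$ (which contributes the factor $1/(1+\rho_r^2)$ and a first-order drift term encoding the slices' mean curvature inside $M_{f,\rho}$) yields the radial part. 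Collecting the $\mathbf Y_1 = (rx,0)$-component forces $\tau_{h_r}(\mathbf{Id}) = 0$ (after projecting off $\mathbf X$, exactly as in the proof of Theorem \ref{con}), collecting the $\mathbf Y_2$-component's tangential piece forces $\tau_{h_r}(f)=0$ — that is Condition (a) — and collecting the scalar ($\rho$-direction) coefficient gives $\Delta_g\rho - 2\rho\,e(f) = 0$, which is Condition (b). The equivalence of (b) with the ODE \eqref{ODE} then comes from writing $\Delta_g\rho$ explicitly: $\rho$ depends only on $r$, so $\Delta_g\rho = g^{00}\rho_{rr} + (\text{trace of Hessian cross terms}) = \tfrac{\rho_{rr}}{1+\rho_r^2} + \rho_r\cdot\Delta_g r$, and $\Delta_g r$ in turn is computed from the slice geometry to be $\sum_i \tfrac{1/r}{1+\la_i^2\rho^2/r^2}$ while $2e(f) = \sum_i \tfrac{\la_i^2}{r^2}\big/(1+\la_i^2\rho^2/r^2)\cdot(\text{the }e_i\text{-normalization})$; substituting gives \eqref{ODE}. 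For the harmonicity statement one also needs that $f:(S^n,h_r)\to(S^m,g_m)$ harmonic for all $r$ is genuinely equivalent to the vanishing of the relevant component of $\mathbf H$ — this is a pointwise-in-$r$ statement and follows because the $e_0$-direction decouples.

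The main obstacle I anticipate is the bookkeeping for the normal direction $e_0$: one must carefully relate $\partial/\partial r$ (not $g$-unit, and not $g$-orthogonal to slices a priori unless one checks it) to the true $g$-unit normal, track the first-order term $\nabla^g_{e_0}e_0$ (the geodesic-curvature of the $r$-curves inside $M_{f,\rho}$) and the mean curvature of the slices inside $M_{f,\rho}$, and make sure the cross terms in $\Delta_g$ between slice directions and $e_0$ vanish — they do, because with the S-basis the metric $g$ is block-diagonal (slice directions are $g$-orthogonal to $\partial_r$ since $\langle e_i, \partial_r\mathbf X\rangle = \langle (re_i, \rho f_*e_i), (x+r\cdot 0, \rho_r f)\rangle$ vanishes using $\langle e_i,x\rangle=0$ on $S^n$ and $\langle f_*e_i, f\rangle = 0$). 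Once orthogonality of the frame is established, the computation is a controlled generalization of \eqref{x1}–\eqref{mean}, and the ODE \eqref{ODE} drops out of the scalar component. I would present the proof by first recording the frame and the block-diagonal form of $g$, then invoking \eqref{com2} twice exactly as in the proof of Theorem \ref{con} to get (a) and the eigenvalue-type equation, and finally doing the one-variable reduction to reach \eqref{ODE}.
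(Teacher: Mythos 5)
Your proposal follows essentially the same route as the paper's proof: the same position vector $\mathbf X=(r\mathbf Y_1,\rho\mathbf Y_2)$, the same frame $\{\tfrac{\partial}{\partial r},E_i\}$ with the same orthogonality check, the same slice-by-slice use of the composition formula \eqref{com2} to extract conditions (a) and (b) from the components of $\De_g\mathbf X=\mathbf H$, and the same one-variable reduction of (b) to \eqref{ODE}. The only caveat is computational: your sketched expression for $\De_g r$ omits the $\text{Hess}_g\, r(\tfrac{\partial}{\partial r},\tfrac{\partial}{\partial r})$ contribution and the $(1+\rho_r^2)^{-1}$ factors (cf.\ \eqref{La1}--\eqref{hess2}), which must be kept to land exactly on \eqref{ODE}.
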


\begin{proof}
Define $\mathbf{X}:U\times S^n\ra \R^{n+m+2}$ by
$$(r,x)\mapsto (r\mathbf{Y}_1(x),\rho(r)\mathbf{Y}_2(x))$$
where $\mathbf Y_1(x)\in \R^{n+1}$ and $\mathbf Y_2(x)\in \R^{m+1}$ are position vectors
of $x$ and $f(x)$ respectively.
Then $\mathbf{X}$ is the position function of $M_{f,\rho}$. The tangent plane at $\mathbf{X}(r,x)$ is spanned
 by
$$\pr:=(\mathbf{Y}_1(x),\rho_r \mathbf{Y}_2(x))$$
and
$$E_i:=(r\ep_i,\rho(r)f_* \ep_i)$$
determined by a basis $\{\ep_1,\cdots,\ep_n\}$ of $T_x S^n$. Moreover,
\begin{equation}
\aligned
\lan \pr,E_i\ran&=\big\lan (\mathbf{Y}_1(x),\rho_r \mathbf{Y}_2(x)),(r\ep_i,\rho(r)f_* \ep_i)\big\ran\\
&=r\lan \mathbf{Y}_1(x),\ep_i\ran+\rho_r\rho\lan \mathbf{Y}_2(x),f_* \ep_i\ran=0
\endaligned
\end{equation}
and
\begin{equation}\label{dr}
\lan \pr,\pr\ran=1+\rho_r^2.
\end{equation}

Noting that the mean curvature vector field on $M_{f,\rho}$
\begin{equation}\label{MeanCur}
\mb{H}=\De_g \mb{X}=\big(\De_g(r\mb{Y}_1),\De_g(\rho(r)\mb{Y}_2)\big),
\end{equation}
we do the following calculations in understanding the second component.

Viewing $\mathbf{Y}_2(x)$ as a vector-valued function
independent of $r$
on $(M_{f,\rho},g)$,
we get
$$
\Hess\mathbf{Y}_2(\pr,\pr)=\n_{\pr}\n_{\pr}\mathbf{Y}_2-(\n_{\pr}\pr)\mathbf{Y}_2=-(\n_{\pr}\pr)\mathbf{Y}_2.$$
By
$
\lan \n_{\pr}\pr,E_i\ran=\lan\n_{\pr}(\mathbf{Y}_1(x),\rho_r\mathbf{Y}_2(x)), E_i\ran
=\big\lan (0,\rho_{rr} \mathbf{Y}_2(x)),(r\ep_i,\rho(r) f_* \ep_i)\ran=0
$
for $1\leq i\leq n$,
$\n_{\pr}\pr$ is parallel to $\pr$ and hence $\Hess \mathbf{Y}_2(\pr,\pr)=0$.
Moreover,
in the Riemannian submanifold $(S^n,h_r)$ in $(M_{f,\rho},g)$,
we have
\begin{equation}
\De_g \mathbf{Y}_2=\De_{h_r}\mathbf{Y}_2+\lan \pr,\pr\ran^{-1}\Hess \mathbf{Y}_2(\pr,\pr)=\De_{h_r}\mathbf{Y}_2.
\end{equation}
Hence, as in \S \ref{NSCL}, for $f:(S^n,h_r)\ra (S^m,g_m)$ we gain
\begin{equation}
\De_g \mb{Y}_2=\De_{h_r}\mathbf{Y}_2=\tau(\mathbf{Y}_2)=\tau(f)-2e(f)\mb{Y}_2,
\end{equation}
and further,
\begin{equation}\label{La}
\aligned
&\De_g(\rho(r)\mb{Y}_2)\\
=&(\De_g\rho) \mb{Y}_2+\rho \De_g \mb{Y}_2+2\lan \pr,\pr\ran^{-1}\rho_r \n_{\pr}\mb{Y}_2+\sum_{i,j}g^{ij}\n_{E_i}\rho\n_{E_j}\mathbf Y_2\\
=&\rho\cdot\tau(f)+(\De_g \rho-2\rho\cdot e(f))\mb{Y}_2,
\endaligned
\end{equation}
where $(g^{ij})$ is the inverse matrix of $(g_{ij}):=\big(\lan E_i,E_j\ran\big)$.

Therefore $\mb{H}=0$ implies
$\tau(f)=0$ and $\De_g \rho-2\rho\cdot e(f)=0$.

Conversely, $\tau(f)=0$, and $\De_g \rho-2\rho\cdot e(f)=0$ lead to
$\mb{H}=(\De_g(r\mb{Y}_1),0)$.
Since
$$0=\lan \mb{H},\pr\ran=\big\lan (\De_g(r\mb{Y}_1),0),(\mb{Y}_1,\rho_r\mb{Y}_2)\big\ran=\lan \De_g(r\mb{Y}_1),\mb{Y}_1\ran$$
and
$$0=\lan \mb{H},E_i\ran=\big\lan (\De_g(r\mb{Y}_1),0),(r\ep_i,\rho f_* \ep_i)\big\ran=r\lan \De_g(r\mb{Y}_1),\ep_i\ran\quad \forall 1\leq i\leq n,$$
we deduce $\De_g(r\mb{Y}_1)=0$ and thus $\mb{H}=0$.

To exhibit the congruence of Condition (b) and \eqref{ODE},
let us figure out explicit expressions of $e(f)$ and $\De_g \rho$.
For an S-basis $\{\ep_1,\cdots,\ep_n\}$ of $(T_x S^n,g_n)$ subject to $\la_1,\cdots,\la_n$,
\begin{equation}
\lan E_i,E_j\ran=\lan (r\ep_i,\rho_r f_* \ep_i),(r\ep_j,\rho f_* \ep_j)\ran=(r^2+\rho^2 \la_i^2)\de_{ij}
\end{equation}
and
\begin{equation}\label{den2}
2e(f)=\sum_{i=1}^n \f{\lan f_* \ep_i,f_* \ep_i\ran}{h_r(\ep_i,\ep_i)}=\sum_{i=1}^n \f{\lan f_* \ep_i,f_* \ep_i\ran}{\lan E_i,E_i\ran}=\sum_{i=1}^n \f{\la_i^2}{r^2+\rho^2\la_i^2}.
\end{equation}
On the other hand,
we have
\begin{equation}\label{La1}\aligned
\De_g \rho(r)&=\rho_r \De_g r+\rho_{rr}|\text{grad}_g r|^2\\
&=\rho_r\left(\f{\text{Hess}_g r(\pr,\pr)}{\lan \pr,\pr\ran}+\sum_{i=1}^n \f{\text{Hess}_g r(E_i,E_i)}{\lan E_i,E_i\ran}\right)+\rho_{rr}|\text{grad}_g r|^2
\endaligned
\end{equation}
where $\text{Hess}_g$ and $\text{grad}_g$
are the Hessian operator and the gradient operator respectively w.r.t. $g$.
Through the computations,
\begin{equation}\label{grad}
|\text{grad}_g r|^2=\lan \pr,\pr\ran^{-1}=\f{1}{1+\rho_r^2},
\end{equation}
\begin{equation}\label{hess1}\aligned
&\text{Hess}_g r(\pr,\pr)=\n_{\pr}dr(\pr)-dr(\n_{\pr}\pr)\\
=&-\f{\lan\n_{\pr}\pr,\pr\ran}{\lan \pr,\pr\ran}dr(\pr)=-\f{\n_{\pr} \lan \pr,\pr\ran}{2 \lan\pr,\pr\ran}
=-\f{\rho_r\rho_{rr}}{1+\rho_r^2}
\endaligned
\end{equation}
and
\begin{equation}\label{hess2}\aligned
&\text{Hess}_g r(E_i,E_i)=\n_{E_i}dr(E_i)-dr(\n_{E_i}E_i)\\
=&-\f{\lan \n_{E_i}E_i,\pr\ran}{\lan \pr,\pr\ran}dr(\pr)=\f{\lan \n_{E_i}\pr,E_i\ran}{\lan \pr,\pr\ran}
=\f{r+\rho\rho_r\la_i^2}{1+\rho_r^2}.
\endaligned
\end{equation}
Then \eqref{La1} is simplified to be
\begin{equation}\label{La3}
\De_g \rho
=\f{\rho_{rr}}{(1+\rho_r^2)^2}+\sum_{i=1}^n\f{\rho_r(r+\rho\rho_r \la_i^2)}{(1+\rho_r^2)(r^2+\rho^2\la_i^2)}.
\end{equation}

By \eqref{den2} and \eqref{La3},
Condition (b) becomes \eqref{ODE}.
\end{proof}

\textbf{Remark. }
Let $\rho:U\ra \R$ be smooth (not requiring $\rho>0$) so that $M_{f,\rho}$ is minimal.
Set
$Z=\{r:\rho(r)=0\}$.
Then by \eqref{La}, $f:(S^n,h_r)\ra (S^m,g_m)$ is harmonic for $r\in U- Z$.
If $Z$ has interior points, the analyticity forces $\rho\equiv 0$.
For $\rho\not\equiv 0$,
since the tension field is smoothly depending on the metric,
 the harmonicity of $f:(S^n,h_r)\ra (S^m,g_m)$ holds for $r\in Z$.
Therefore, `$\rho$ is positive' in the theorem can be replaced by `$\rho$ is not identically vanishing'.

\bigskip

In the remaining part of the present paper, we shall focus on the case of LOMSE.
We will first establish a simple version of Theorem \ref{min_graph1} for LOMSE
and then obtain its several interesting applications.

\subsection{Entire minimal graphs associated to LOMSEs}\label{EMG}

Recall that for an LOMSE $f:S^n\ra S^m$ of $(n,p,k)$-type
we have in Theorem \ref{npk} that
\begin{equation}\label{sing2}
\la=\sqrt{\f{k(k+n-1)}{p}}
\end{equation}
is the nonzero singular value of $f_*$ at each point, and
from Theorem \ref{g2} that
$f=i\circ \pi$ where $\pi:(S^n,g_n)\ra (P,h)$ is a harmonic Riemannian submersion and $i:(P,h)\ra (S^m,\la^{-2}g_m)$ is an isometric minimal immersion.

Let $x\in S^n$,
$\la_1=\cdots=\la_p=\la$ and $\la_{p+1}=\cdots=\la_n=0$.
Then under an S-basis $\{\ep_1,\cdots,\ep_n\}$ of $(T_x S^n,g_n)$
for $f$
subject to $\la_1,\cdots,\la_n$,
\begin{equation}\label{hr}
h_r(\ep_i,\ep_j)=(r^2+\rho^2\la_i^2)\de_{ij}=\left\{\begin{array}{ll}
0 & i\neq j,\\
r^2+\rho^2\la^2 & 1\leq i=j\leq p,\\
r^2 & p+1\leq i=j\leq n.
\end{array}\right.
\end{equation}
Set $\mu:=r^2+\rho^2\la^2$. Then
$\pi:(S^n,h_r)\ra (P,\mu h)$ is a Riemannian submersion and $i:(P,\mu h)\ra (S^m,\mu\la^{-2}g_m)$ is an isometric minimal immersion.
Further, by Lemma \ref{lem3}.
we gain the harmonicity of $\pi:(S^n,h_r)\ra (P,\mu h)$
as in the proof of Theorem \ref{npk}.
Employing the composition formula (\ref{com2}),
we can show that $f=i\circ \pi$ is a harmonic map
from $(S^n,h_r)$ into $(S^m,g_m)$ for each $r\in U$.
Hence we have a simpler version of Theorem \ref{min_graph1} for LOMSEs.

\begin{thm}\label{min_graph2}
For an LOMSE $f:S^n\ra S^m$ and smooth $\rho:U\subset (0,+\infty)\ra \R$,
$M_{f,\rho}$ is minimal in $\R^{n+m+2}$ if and only if
\begin{equation}\label{ODE1}
\f{\rho_{rr}}{1+\rho_r^2}+\f{(n-p)\rho_r}{r}+\f{p(\f{\rho_r}{r}-\f{\la^2\rho}{r^2})}{1+\f{\la^2\rho^2}{r^2}}=0.
\end{equation}

\end{thm}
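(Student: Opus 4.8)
The plan is to specialize Theorem \ref{min_graph1} to the case where $f$ is an LOMSE of $(n,p,k)$-type, using the structure $f=i\circ\pi$ furnished by Theorem \ref{g2}. The content of Theorem \ref{min_graph1} is that $M_{f,\rho}$ is minimal precisely when (a) $f:(S^n,h_r)\to(S^m,g_m)$ is harmonic for each $r\in U$, and (b) the ODE \eqref{ODE} holds, where \eqref{ODE} is just Condition (b) rewritten via the singular values $\la_1,\dots,\la_n$ of $(f_*)_x$. So the entire task reduces to two things: first, show that Condition (a) is automatic for an LOMSE (i.e.\ needs no hypothesis on $\rho$), and second, substitute the explicit singular-value profile $\la_1=\cdots=\la_p=\la$, $\la_{p+1}=\cdots=\la_n=0$ into \eqref{ODE} and see that it collapses to \eqref{ODE1}.

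For the first point I would argue exactly as sketched in the paragraph preceding the theorem statement. Fix $r\in U$ and consider the metric $h_r$ on $S^n$. Using an S-basis one computes $h_r(\ep_i,\ep_j)=(r^2+\rho^2\la_i^2)\de_{ij}$, so that along the fibers of $\pi$ (where $\la_i=0$) the metric $h_r$ is just $r^2$ times $g_n$, while in the horizontal directions it is $\mu:=r^2+\rho^2\la^2$ times $g_n$. Hence $\pi:(S^n,h_r)\to(P,\mu h)$ is again a Riemannian submersion, and since $\pi:(S^n,g_n)\to(P,h)$ is harmonic — equivalently, by Proposition \ref{ER}, has minimal fibers — and the pair $(g_n,h_r)$ satisfies hypotheses (a)--(b) of Lemma \ref{lem3} (fiberwise conformal with the constant $r^2$, and orthogonality of horizontal to vertical is preserved), Lemma \ref{lem3} gives that the fibers are still minimal in $(S^n,h_r)$, so $\pi:(S^n,h_r)\to(P,\mu h)$ is harmonic. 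Meanwhile $i:(P,\mu h)\to(S^m,\mu\la^{-2}g_m)$ is a rescaling of the isometric minimal immersion $i:(P,h)\to(S^m,\la^{-2}g_m)$, hence still isometric minimal; since scaling the target sphere's metric does not affect harmonicity of an isometric minimal immersion (mean curvature zero is scale-invariant), $i:(P,\mu h)\to(S^m,g_m)$ is harmonic too. Then the composition formula \eqref{com2}, applied to $f=i\circ\pi$ with $i$ the isometric immersion, yields $\tau(f)=\tau(\pi)+\sum_j B(\pi_*e_j,\pi_*e_j)$, and both terms vanish — the first by harmonicity of $\pi$, the second because on horizontal S-basis vectors $\sum_{j=1}^p B(\pi_*\ep_j,\pi_*\ep_j)$ is (up to the scaling constant built into $\mu$) the mean curvature of $i$, which is zero. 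This establishes Condition (a) for every $r\in U$ with no constraint on $\rho$.

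For the second point, I simply plug the singular values into \eqref{ODE}: the $p$ indices with $\la_i=\la$ each contribute $\bigl(\tfrac{\rho_r}{r}-\tfrac{\la^2\rho}{r^2}\bigr)/\bigl(1+\tfrac{\la^2\rho^2}{r^2}\bigr)$, and the $n-p$ indices with $\la_i=0$ each contribute $\bigl(\tfrac{\rho_r}{r}-0\bigr)/(1+0)=\tfrac{\rho_r}{r}$. Summing gives exactly $\dfrac{\rho_{rr}}{1+\rho_r^2}+\dfrac{(n-p)\rho_r}{r}+\dfrac{p\bigl(\tfrac{\rho_r}{r}-\tfrac{\la^2\rho}{r^2}\bigr)}{1+\tfrac{\la^2\rho^2}{r^2}}=0$, which is \eqref{ODE1}. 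Combining: by Theorem \ref{min_graph1}, $M_{f,\rho}$ is minimal iff (a) and \eqref{ODE} hold; (a) holds automatically, so $M_{f,\rho}$ is minimal iff \eqref{ODE} holds, i.e.\ iff \eqref{ODE1} holds.

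The only genuinely delicate step is verifying Condition (a) — specifically, making sure the chain of metric rescalings and the application of Lemma \ref{lem3} are legitimate, since $\mu=r^2+\rho^2\la^2$ depends on $r$ but, once $r$ is fixed, is a genuine positive constant, so Lemma \ref{lem3} applies with that constant. One should also be slightly careful that $\rho$ need not be positive here, but since harmonicity of $f:(S^n,h_r)\to(S^m,g_m)$ only requires the submersion/immersion structure of $h_r$, which persists as long as $r>0$, this causes no trouble; in fact this is exactly why the theorem, unlike Theorem \ref{min_graph1}, can be stated cleanly as a single ODE without sign hypotheses on $\rho$. Everything else is the routine substitution above, so I would present the proof compactly: cite Theorem \ref{g2} for $f=i\circ\pi$, run the Lemma \ref{lem3} plus composition-formula argument to get Condition (a), then do the one-line specialization of \eqref{ODE} to \eqref{ODE1}.
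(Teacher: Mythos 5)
Your proposal is correct and follows essentially the same route as the paper: the paragraph preceding Theorem \ref{min_graph2} carries out exactly your argument, computing $h_r$ in an S-basis, invoking Lemma \ref{lem3} and the composition formula \eqref{com2} to show Condition (a) of Theorem \ref{min_graph1} holds automatically for an LOMSE, and then specializing \eqref{ODE} to the singular-value profile $\la_1=\cdots=\la_p=\la$, $\la_{p+1}=\cdots=\la_n=0$ to obtain \eqref{ODE1}. Your added remarks on the $r$-dependence of $\mu$ and on dropping the positivity hypothesis on $\rho$ are consistent with the paper's own remark following Theorem \ref{min_graph1}.
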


\textbf{Remark. } For $f=H^{2m-1,m}$ with $m=2,4$ or $8$,
$M_{f,\rho}$ is minimal if and only if
\begin{equation}\label{ODE4}
\f{\rho_{rr}}{1+\rho_r^2}+\f{(m-1)\rho_r}{r}+\f{m(\f{\rho_r}{r}-\f{4\rho}{r^2})}{1+\f{4\rho^2}{r^2}}=0.
\end{equation}
This ODE was first obtained by Ding-Yuan \cite{d-y} based on the symmetry of Hopf maps.
It should be pointed out that the argument in the present paper is also applicable to non-equivariant $f$.


Let us analyze (\ref{ODE1}).
As in \cite{d-y}, set
\begin{equation}\label{varphi}
\varphi:=\f{\rho}{r},\quad t:=\log r.
\end{equation}
With
\begin{equation}\label{rhor}
\rho_r=(e^t\varphi)_t \f{dt}{dr}=\varphi_t+\varphi
\end{equation}
and
\begin{equation}
\rho_{rr}=(\varphi_t+\varphi)_t \f{dt}{dr}=\f{\varphi_{tt}+\varphi_t}{r},
\end{equation}
we can rewrite \eqref{ODE1} as
\begin{equation}
\f{\varphi_{tt}+\varphi_t}{1+(\varphi_t+\varphi)^2}+(n-p)(\varphi_t+\varphi)+\f{p(\varphi_t+\varphi-\la^2\varphi)}{1+\la^2 \varphi^2}=0.
\end{equation}
After introducing
\begin{equation}\label{psi}
\psi:=\varphi_t,
\end{equation}
we transform \eqref{ODE1} to the ODE system:
\begin{equation}\label{ODE2}
\aligned
\left\{\begin{array}{ll}
\varphi_t=\psi,\\
\psi_t=-\psi-\Big[\big(n-p+\f{p}{1+\la^2\varphi^2}\big)\psi+\big(n-p+\f{(1-\la^2)p}{1+\la^2\varphi^2}\big)\varphi\Big]\big[1+(\varphi+\psi)^2\big].
\end{array}\right.
\endaligned
\end{equation}
This is an autonomous system
and
$\g: t\mapsto (\varphi(t),\psi(t))$ satisfies \eqref{ODE2} if and only
$\g$ is an integral curve of the vector field
$X:=(X_1,X_2)$
where
\begin{equation}\label{X12}\aligned
\left\{\begin{array}{ll}
X_1&=\psi,\\
X_2&=-\psi-\Big[\big(n-p+\f{p}{1+\la^2\varphi^2}\big)\psi+\big(n-p+\f{(1-\la^2)p}{1+\la^2\varphi^2}\big)\varphi\Big]\big[1+(\varphi+\psi)^2\big].
\end{array}\right.
\endaligned
\end{equation}

Clearly, $X$ has exactly 3 zero points $(0,0)$ and $(\pm\varphi_0,0)$, where
\begin{equation}\label{varphi0}
\varphi_0:=\sqrt{\f{p-n\la^{-2}}{n-p}}.
\end{equation}
Since $X$ is symmetric about the origin (i.e. $X(-\varphi,\psi)=-X(\varphi,\psi)$), we shall therefore only focus on the half plane $\varphi\geq0$.

\textbf{Remark. }As a zero point, $t\in \R\mapsto (\varphi_0,0)$ gives a trivial solution to (\ref{ODE2}). Hence
$\rho(r)=\varphi_0 r$ is a solution to (\ref{ODE1}) and
 $F_{f,\rho}:\R^{n+1}\ra \R^{m+1}$
$$F_{f,\rho}(y)=\left\{\begin{array}{cc}
\varphi_0|y|f(\f{y}{|y|}) & y\neq 0\\
0 & y=0
\end{array}\right.$$
is a Lipschitz solution to the minimal surface equations. Comparing (\ref{th}) and \eqref{varphi0}
we see $\varphi_0=\tan \th$
and the corresponding graph is exactly the $C_{f,\th}$.
Meanwhile, another trivial solution $t\in \R\mapsto (0,0)$ of \eqref{ODE2} gives the coordinate $(n+1)$-plane.

At $(0,0)$, the linearized system of (\ref{ODE2}) is
\begin{equation}
\left(\begin{array}{c}\varphi_t\\ \psi_t\end{array}\right) =A\left(\begin{array}{c}\varphi\\ \psi\end{array}\right)
\end{equation}
where
\begin{equation}\label{A}
A=\left(\begin{array}{cc}
0 & 1\\
\la^2p-n & -n-1
\end{array}
\right)=
\left(\begin{array}{cc}
0 & 1\\
k(k+n-1)-n & -n-1
\end{array}
\right).
\end{equation}
Through calculations, the eigenvalues of $A$  are
\begin{equation}\label{la12}
\mu_1=k-1,\qquad \mu_2=-n-k,
\end{equation}
with eigenvectors
\begin{equation}\label{V12}
V_1:=(1, \mu_1)^T,\quad V_2:=(1, \mu_2)^T,
\end{equation}
respectively.
Hence $(0,0)$ is a saddle critical point.

At $(\varphi_0,0)$, the linearized system is
\begin{equation}
\left(\begin{array}{c}(\varphi-\varphi_0)_t\\ \psi_t\end{array}\right)=B\left(\begin{array}{c}\varphi-\varphi_0\\ \psi\end{array}\right)
\end{equation}
with
\begin{equation}
B=\left(\begin{array}{cc}
0 & 1\\
a & b
\end{array}
\right),
\end{equation}
where
\begin{equation}\label{ab}\aligned
a&:=\f{2\la^2(1-\la^2)p\varphi_0^2(1+\varphi_0^2)}{(1+\la^2\varphi_0^2)^2}=2n\Big(\f{n}{k(k+n-1)}-1\Big),\\
b&:=-1-(n-p+\f{p}{1+\la^2\varphi_0^2})(1+\varphi_0^2)=-n-1.
\endaligned
\end{equation}
Let $\mu_3,\mu_4$ be the eigenvalues of $B$.
Then $\mu_3+\mu_4=\text{tr }B=b<0$, $\mu_3\mu_4=|B|=-a>0$, and
\begin{equation}
(\mu_3-\mu_4)^2=(\mu_3+\mu_4)^2-4\mu_3\mu_4=b^2+4a
=n^2-6n+1+\f{8n^2}{k(k+n-1)}.
\end{equation}
When $n=3$, $k\geq 4$ or $n=5,k\geq 6$,
$\{\mu_3,\mu_4\}$ become a pair of conjugate complex numbers with negative real part;
while in other cases, both $\mu_3$ and $\mu_4$ are negative real numbers. Therefore
\begin{enumerate}

\item[(I)] If $(n,p,k)=(3,2,2),(5,4,2),(5,4,4)$ or $n\geq 7$, $(\varphi_0,0)$ is a stable center of (\ref{ODE2});

\item[(II)] If $(n,p)=(3,2)$, $k\geq 4$ or $(n,p)=(5,4),k\geq 6$, $(\varphi_0,0)$ is a stable spiral point of (\ref{ODE2}).

\end{enumerate}

Based on the above local analysis,
we are able to establish
the following two existence results of nontrivial bounded solutions of
distinct types to \eqref{ODE2} according to the values of $(n,p,k)$.
The idea is to construct suitable barrier functions.
Since their proofs are a bit long and subtle, we leave them in Appendices \S \ref{App3}-\ref{App4}.

\begin{pro}\label{case1}
If $(n,p,k)=(3,2,2),(5,4,2),(5,4,4)$ or $n\geq 7$, then there exists a smooth solution $t\in \R\mapsto (\varphi(t),\psi(t))$ to (\ref{ODE2}), with properties
\begin{itemize}
\item $\lim\limits_{t\ra -\infty}(\varphi(t),\psi(t))=(0,0)$;
\item $\varphi(t)=O(e^{(k-1) t})$ and $\psi(t)=O(e^{(k-1) t})$ as $t\ra -\infty$;
\item $\lim\limits_{t\ra +\infty}(\varphi(t),\psi(t))=(\varphi_0,0)$;
\item $t\mapsto \varphi(t)$ is a strictly increasing function;
\item $\psi(t)> 0$ for every $t\in \R$.
\end{itemize}
$$\includegraphics[scale=0.65]{F1.png}$$

\end{pro}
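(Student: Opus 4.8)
\textbf{Proof proposal for Proposition \ref{case1}.}

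The plan is to produce the desired connecting orbit by a shooting/barrier argument in the half-plane $\{\varphi\geq 0\}$. First I would examine the local picture at the saddle $(0,0)$: since $A$ has eigenvalues $\mu_1=k-1>0$ and $\mu_2=-n-k<0$ with eigenvectors $V_1=(1,k-1)^T$ and $V_2=(1,-n-k)^T$, the stable manifold theorem gives a unique (up to time translation) trajectory $\g$ leaving $(0,0)$ as $t\to-\infty$ along the unstable direction $V_1$, lying locally in the first quadrant and satisfying $\varphi(t),\psi(t)=O(e^{(k-1)t})$ — this immediately yields the first two bulleted properties. The rest of the argument is to follow this particular orbit forward in time and show it stays trapped in a bounded region, monotone in $\varphi$, with $\psi>0$ throughout, and accumulates only at $(\varphi_0,0)$.

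The key steps I would carry out are: (1) Define a trapping region $\mc D$ in the $(\varphi,\psi)$-plane bounded below by $\psi=0$ (for $0<\varphi<\varphi_0$) and on the right by $\varphi=\varphi_0$, and bounded above by a suitable barrier curve $\psi=\beta(\varphi)$ — chosen so that along $\psi=\beta(\varphi)$ the vector field $X$ points into $\mc D$, i.e.\ $\beta'(\varphi)\,X_1<X_2$; a natural first guess is a linear barrier $\psi=c\,\varphi$ with $c$ calibrated against the linearization at the origin, possibly modified near $\varphi=\varphi_0$. (2) Check the lower boundary: on $\{\psi=0,\ 0<\varphi<\varphi_0\}$ we have $X_1=\psi=0$ and, from \eqref{X12}, $X_2=-\big(n-p+\tfrac{(1-\la^2)p}{1+\la^2\varphi^2}\big)\varphi\,[1+\varphi^2]$, whose sign is governed by $n-p+\tfrac{(1-\la^2)p}{1+\la^2\varphi^2}$; since this quantity vanishes exactly at $\varphi=\varphi_0$ and $\la^2>n/p$, it is negative for $\varphi<\varphi_0$, so $X_2>0$ there and the flow crosses upward — the orbit cannot exit through $\psi=0$. (3) On $\{\varphi=\varphi_0,\ \psi>0\}$ we have $X_1=\psi>0$, so $\varphi$ is still increasing; to keep the orbit bounded I would instead close off the region with a second barrier to the right of $\varphi_0$, or — cleaner — argue that once inside the basin of the stable center/critical point $(\varphi_0,0)$ the orbit is forever trapped in a small neighborhood by continuity of solutions and the fact (established in the excerpt) that $B$ has eigenvalues with negative real part, hence $(\varphi_0,0)$ is asymptotically stable; combined with monotonicity of $\varphi$ and positivity of $\psi$, the $\omega$-limit set is forced to be $\{(\varphi_0,0)\}$. (4) Strict monotonicity of $\varphi$ follows from $\varphi_t=\psi>0$ on the whole orbit, which itself follows from steps (2)–(3) showing $\psi$ never returns to $0$ after leaving the origin.

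Assembling these gives all five properties: exponential decay at $-\infty$ from the stable-manifold analysis, existence for all $t\in\R$ from the trapping region (no blow-up, no escape), convergence to $(\varphi_0,0)$ from the Poincaré–Bendixson-type argument using monotonicity, $\varphi$ strictly increasing and $\psi>0$ from the invariance of $\mc D$. The main obstacle, I expect, is the \emph{design of the barrier curve} $\beta(\varphi)$: the inequality $\beta'(\varphi)X_1(\varphi,\beta(\varphi))<X_2(\varphi,\beta(\varphi))$ must hold for \emph{all} $\varphi\in(0,\varphi_0)$, and because $X_2$ contains the factor $[1+(\varphi+\psi)^2]$ the inequality is quadratic in $\beta$ and the admissible window of slopes can be narrow near $\varphi=0$ (where one must beat the unstable eigenvalue $k-1$) and again near $\varphi=\varphi_0$ (where the coefficient of $\varphi$ in $X_2$ degenerates). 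Getting a single explicit $\beta$ — or a finite concatenation of arcs — that threads both ends is the delicate computational heart of the argument, which is why the authors defer it to Appendix \S\ref{App3}.
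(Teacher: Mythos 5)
Your overall architecture — unstable manifold at the saddle for the asymptotics at $-\infty$, a forward-invariant trapping region to get global existence with $\psi>0$ and $\varphi$ increasing, then a Poincar\'e--Bendixson/monotonicity argument to force convergence to $(\varphi_0,0)$ — is exactly the paper's, and your treatment of the lower boundary $\{\psi=0\}$ (where $X_2=f_1(\varphi)\varphi(1+\varphi^2)>0$ for $0<\varphi<\varphi_0$) and of the condition that the upper barrier's slope at the origin exceed $\mu_1=k-1$ both match the paper. The gap is in the design of the trapping region at the right-hand end. A region with a vertical wall at $\varphi=\varphi_0$ is not forward-invariant ($X_1=\psi>0$ there), as you yourself note, and neither of your two fixes closes the argument: a ``second barrier to the right of $\varphi_0$'' is never exhibited, and the basin-of-attraction alternative is circular — local asymptotic stability of $(\varphi_0,0)$ only attracts orbits that enter some unspecified small neighborhood, and nothing in your construction prevents the orbit from leaving your region through the wall with $\psi$ as large as $\beta(\varphi_0)\gtrsim(k-1)\varphi_0$ and overshooting. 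Since an overshoot past $\varphi_0$ would force $\psi$ to become negative later (contradicting two of the five claimed properties), ruling it out is precisely the crux, not a detail.

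The missing idea is that the upper barrier must itself vanish at $\varphi_0$, so that the trapping region is a closed lens pinched at the two critical points $(0,0)$ and $(\varphi_0,0)$ and is genuinely forward-invariant with no exit. The paper takes $h(\varphi)=\f{f_1(\varphi)\varphi}{c(n-p)}$ with $f_1(\varphi)=\f{(\la^2-1)p}{1+\la^2\varphi^2}-(n-p)$, which is proportional to the restoring coefficient in $X_2$ and hence vanishes exactly at $\varphi_0$; the orbit is then squeezed between $\psi=0$ and $\psi=h(\varphi)\to0$ as the monotone, bounded $\varphi$ tends to $\varphi_0$, which delivers the convergence without any appeal to the local basin. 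With $\beta\to0$ and $f_1\to0$ simultaneously, the ratio $f_1(\varphi)\varphi/\beta(\varphi)$ in $X_2/X_1$ stays of order one near $\varphi_0$, so the barrier inequality is a genuine constraint at that end too — this is why the paper must verify $F(0)\ge0$, $G(0)>0$, $G(\la^2\varphi_0^2)>0$ after a rational substitution and choose $c\in(0,1]$ case by case ($c=1$, $c=6/7$, $c=1/2$ according to $(n,p,k)$). So you correctly located the computational heart of the proof, but a linear barrier $\psi=c\varphi$ (or any barrier with $\beta(\varphi_0)>0$) cannot work no matter how the slope is tuned; the shape of the barrier near $\varphi_0$, not just its slope near $0$, is essential.
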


\begin{pro}\label{case2}
If $(n,p)=(3,2)$, $k\geq 4$ or $(n,p)=(5,4)$, $k\geq 6$, then there exist a smooth solution $t\in \R \mapsto (\varphi(t),\psi(t))$ to (\ref{ODE2})
and a strictly increasing sequence $\{T_i:i\in \Bbb{Z}^+\}$ in $\R$, such that
\begin{itemize}
\item $\lim\limits_{t\ra -\infty}(\varphi(t),\psi(t))=(0,0)$;
\item $\varphi(t)=O(e^{(k-1) t})$ and $\psi(t)=O(e^{(k-1) t})$ as $t\ra -\infty$;
\item $\lim\limits_{t\ra +\infty}(\varphi(t),\psi(t))=(\varphi_0,0)$;
\item $\lim\limits_{i\ra \infty}T_i=+\infty$;
\item $\psi(T_i)=0$ for all $i\in \Bbb{Z}^+$;
\item With $\varphi_i:=\varphi(T_i)$, $\{\varphi_{2m-1}:m\in \Bbb{Z}^+\}$
is  strictly decreasing
and
$\{\varphi_{2m}:m\in \Bbb{Z}^+\}$
is strictly increasing
with the common limit $\varphi_0$;
\item $\psi(t)>0$ for $t\in (-\infty,T_1)\cup \left(\bigcup\limits_{m\in \Bbb{Z}^+}(T_{2m},T_{2m+1})\right)$;
\item $\psi(t)<0$ for $t\in \bigcup\limits_{m\in \Bbb{Z}^+}(T_{2m-1},T_{2m})$;
\item $(\varphi+\psi)(t)>0$ for all $t\in \R$.
\end{itemize}
Namely, the orbit of this solution tends to the saddle point $(0,0)$ as $t\ra -\infty$ and spins around the spiral point $(\varphi_0,0)$
as $t\ra +\infty$.
$$\includegraphics[scale=0.65]{F2N.png}$$
\end{pro}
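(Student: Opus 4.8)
The plan is to analyze the autonomous system \eqref{ODE2} on the half-plane $\varphi\geq 0$ and to extract, via a barrier/trapping-region argument, an orbit that emanates from the saddle $(0,0)$ along the unstable eigendirection and is forced to remain in a bounded region, whence (since $(\varphi_0,0)$ is the only other critical point there and it is a stable spiral) it must spiral into $(\varphi_0,0)$. First I would invoke the stable manifold theorem at $(0,0)$: since $A$ in \eqref{A} has eigenvalues $\mu_1=k-1>0$ and $\mu_2=-n-k<0$ with eigenvectors $V_1,V_2$ from \eqref{V12}, there is a one-parameter family of orbits leaving $(0,0)$ tangent to $V_1=(1,k-1)^T$; choosing the branch pointing into $\{\varphi>0,\psi>0\}$ gives a solution with $\lim_{t\to-\infty}(\varphi,\psi)=(0,0)$ and the asymptotics $\varphi(t)=O(e^{(k-1)t})$, $\psi(t)=O(e^{(k-1)t})$ (this is the standard refinement of the stable-manifold estimate, using $\mu_1=k-1$). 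This settles the first two bullets.

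Next I would construct an explicit bounded trapping region $R\subset\{\varphi\geq 0\}$ containing both critical points, by choosing suitable barrier curves (e.g. of the form $\psi=\pm c\,\varphi$ for small $\varphi$ together with horizontal/vertical segments, or a curve adapted to the nullclines $\psi=0$ and $X_2=0$) along which the vector field $X=(X_1,X_2)$ of \eqref{X12} points inward; the appendix referenced in the excerpt (\S\ref{App4}) is where the precise barriers live. Since $X_2$ contains the factor $[1+(\varphi+\psi)^2]>0$, the sign of $\psi_t$ along $\psi=0$ is governed by $-\big(n-p+\tfrac{(1-\la^2)p}{1+\la^2\varphi^2}\big)\varphi\cdot[1+\varphi^2]$, which is positive for $0<\varphi<\varphi_0$ and negative for $\varphi>\varphi_0$; this is what makes the orbit cross the $\varphi$-axis transversally and alternately, producing the sequence $\{T_i\}$ of zeros of $\psi$. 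Once the orbit is trapped in $R$, it exists for all $t\in\R$ (no blow-up), and the Poincar\'e–Bendixson theorem applied in $R$ (which contains no periodic orbit, by the Bendixson–Dulac criterion or by monitoring a Lyapunov function built from the linearization at $(\varphi_0,0)$, recalling $b=-n-1<0$ so $\operatorname{tr}B<0$ and the spiral is attracting) forces $\omega$-limit set $=\{(\varphi_0,0)\}$, giving $\lim_{t\to+\infty}(\varphi,\psi)=(\varphi_0,0)$.

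It remains to verify the fine oscillation structure: that $\psi(T_i)=0$ with $T_i\nearrow+\infty$, that the successive amplitudes $\varphi_i=\varphi(T_i)$ satisfy $\varphi_{2m-1}\searrow\varphi_0$ and $\varphi_{2m}\nearrow\varphi_0$, that $\psi$ alternates sign on the intervals $(T_i,T_{i+1})$, and that $\varphi+\psi>0$ throughout. The sign-alternation of $\psi$ is immediate from the definition of the $T_i$ as consecutive zeros together with $\psi_t\neq 0$ there (transversality: at $\psi=0$, $\psi_t=X_2\neq 0$ unless $\varphi=\varphi_0$). The monotone interlacing $\varphi_{2m-1}\searrow\varphi_0\swarrow\varphi_{2m}$ I would get from a monotonicity-of-amplitude (Lyapunov-decrease) estimate: a well-chosen energy $E(\varphi,\psi)$ — positive definite near $(\varphi_0,0)$, essentially the quadratic form associated with $B$, corrected by the cubic terms — is strictly decreasing along the flow in $R\setminus\{(\varphi_0,0)\}$, so the value of $E$ at successive axis-crossings strictly decreases, and since on the axis $E$ is a strictly monotone function of $|\varphi-\varphi_0|$, the crossing amplitudes converge monotonically to $\varphi_0$ from each side. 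The bound $\varphi+\psi>0$ should follow because $\rho_r=\varphi_t+\varphi=\varphi+\psi$ by \eqref{rhor}, and one designs $R$ to lie in $\{\varphi+\psi>0\}$ (equivalently, the graph stays a graph, $\rho_r>0$); checking that the vector field does not let an orbit cross $\varphi+\psi=0$ is a short computation on that line.

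\textbf{Main obstacle.} The genuine difficulty is the construction of barrier curves that simultaneously (i) bound a compact region, (ii) contain both $(0,0)$ and $(\varphi_0,0)$, (iii) stay inside $\{\varphi+\psi>0\}$, and (iv) are crossed by $X$ in the correct (inward) direction everywhere, in all the parameter cases $(n,p)=(3,2),k\geq 4$ and $(n,p)=(5,4),k\geq 6$ at once — the vector field \eqref{X12} is not a perturbation of a linear one away from the critical points, so the inequalities verifying inward-pointing are nontrivial and parameter-dependent; this is precisely why the proof is deferred to Appendix \S\ref{App4}. The secondary delicate point is producing a Lyapunov function that is monotone along the \emph{global} orbit (not merely locally near $(\varphi_0,0)$), so as to pin down the monotone interlacing of the amplitudes $\varphi_i$.
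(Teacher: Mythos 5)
Your skeleton matches the paper's: unstable manifold at the saddle with the Hartman-type asymptotics $O(e^{(k-1)t})$, a barrier region forcing the orbit toward $(\varphi_0,0)$, exclusion of periodic orbits, and then the oscillation structure. But the heart of the proof --- the mechanism that rules out limit cycles and yields the \emph{strict} monotone interlacing $\varphi_{2m-1}\searrow\varphi_0\swarrow\varphi_{2m}$ --- is exactly what you leave unverified. You offer two candidates (Bendixson--Dulac, or a global Lyapunov function ``essentially the quadratic form associated with $B$''), and neither is carried out; neither is obviously available, since $\operatorname{div}X=\partial_\psi X_2$ changes sign and the nonlinearity is not a perturbation of the linearization away from $(\varphi_0,0)$. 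The paper does something different (Lemma \ref{lem} in \S\ref{App4}): it reflects the lower-half excursion via $\tilde\psi=-\psi$, writes both half-orbits as graphs $\psi(\varphi)$, $\tilde\psi(\varphi)$ over the $\varphi$-axis, and proves a pointwise inequality $X_2+Y_2<0$ (equivalently a negative determinant between the original and reflected vector fields) valid only on the region $\varphi\geq\sqrt{\tfrac{3p-n-1}{3(n-p)}}$. This comparison shows each return to the axis lands strictly closer to $\varphi_0$ than the previous crossing on the same side, which simultaneously kills limit cycles in that region and gives the interlacing. Without some such argument, Poincar\'e--Bendixson only tells you the $\omega$-limit set is a critical point \emph{or} a cycle, and the bullet about monotone amplitudes is not reached at all.

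A second, related gap: because the no-limit-cycle lemma only holds for $\varphi$ bounded below by $\sqrt{\tfrac{3p-n-1}{3(n-p)}}$, one must show the orbit never leaves that region after its first crossing of $\varphi=\varphi_0$. The paper gets this from a quantitative barrier in Step 1 of \S\ref{App4} (the curve $\psi=g(\varphi)=(2f_1(\varphi)+\tfrac15)\varphi$), which forces $\psi\leq\tfrac15\varphi_0$ at the moment the orbit reaches $\varphi=\varphi_0$, whence all crossing amplitudes lie in $[\tfrac45\varphi_0,\tfrac65\varphi_0]$; this containment also underlies the bullet $(\varphi+\psi)(t)>0$, via the sign of $(\varphi+\psi)_t=-\bigl(f_2(\varphi)\psi-f_1(\varphi)\varphi\bigr)\bigl[1+(\varphi+\psi)^2\bigr]$ on the relevant quadrants. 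Your proposal asserts that one ``designs $R$ to lie in $\{\varphi+\psi>0\}$'' but gives no such quantitative control, and the inward-pointing inequalities for the barrier are precisely the parameter-dependent computations you defer. In short: the outline is sound, but the two decisive estimates (the reflection/comparison inequality and the quantitative barrier bounding the first overshoot) are missing, and the tools you name in their place would not straightforwardly supply them.
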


From Propositions \ref{case1}-\ref{case2},
we obtain the existence of entire minimal graphs associated to LOMSEs of $(n,p,k)$-type as follows.

\begin{thm}\label{graph}
For every LOMSE $f$, there exists a smooth function $\rho$ on $(0,\infty)$
such that
\begin{equation}\label{expF}
F_{f,\rho}(y)=\left\{\begin{array}{cc}
\rho(|y|)f(\f{y}{|y|}) & y\neq 0,\\
0 & y=0.
\end{array}\right.
\end{equation}
gives an entire minimal graph with $C_{f,\th}$, the LOC associated to $f$, as its tangent cone at infinity.
\end{thm}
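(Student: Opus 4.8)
The strategy is to translate the desired statement entirely into the autonomous system \eqref{ODE2}, apply Propositions \ref{case1}--\ref{case2} to produce the right orbit, and then check that the resulting $\rho$ has all the regularity and asymptotic properties needed. First, given an LOMSE $f$ of $(n,p,k)$-type, I would invoke Proposition \ref{case1} when $(n,p,k)$ is of Type (I) and Proposition \ref{case2} when it is of Type (II), obtaining a smooth solution $t\in\R\mapsto(\varphi(t),\psi(t))$ to \eqref{ODE2} with $\varphi(t)=O(e^{(k-1)t})$, $\psi(t)=O(e^{(k-1)t})$ as $t\to-\infty$ and $(\varphi(t),\psi(t))\to(\varphi_0,0)$ as $t\to+\infty$. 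Via the substitution \eqref{varphi}, \eqref{psi}, i.e. $\varphi=\rho/r$, $t=\log r$, $\psi=\varphi_t$, this produces a smooth positive function $\rho$ on $(0,\infty)$: positivity of $\rho$ on the whole ray follows since $\varphi>0$ throughout (in Type (I) because $\varphi$ is strictly increasing from $0$, in Type (II) because $\varphi_i\to\varphi_0>0$ with the odd subsequence decreasing, so $\varphi$ stays bounded away from $0$ by $\min_i\varphi_i>0$ — this needs a one-line check). By Theorem \ref{min_graph2}, $M_{f,\rho}$ is then minimal in $\R^{n+m+2}$.

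Next I would verify that $F_{f,\rho}$ of the form \eqref{expF} extends analytically (in particular $C^1$) across the origin, so that the graph of $F_{f,\rho}$ over all of $\R^{n+1}$ — not merely over $\R^{n+1}\setminus\{0\}$ — is a genuine entire minimal graph. For this I would use the asymptotics $\rho(r)=r\varphi(\log r)=O(r\cdot e^{(k-1)\log r})=O(r^k)$ as $r\to 0^+$, and similarly $\rho_r=\varphi_t+\varphi=O(r^{k-1})$ by \eqref{rhor}; since $k\geq 2$, we get $\rho(r)/r\to 0$ and $\rho_r(0)=0$, hence $F_{f,\rho}$ is $C^1$ at the origin with vanishing differential there. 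Then, exactly as in the discussion preceding Theorem \ref{min_graph2} in the Introduction, Morrey's regularity theorem \cite{mo} upgrades the weak (minimal-surface-equation) solution to an analytic one through the origin; alternatively one notes $M_{f,\rho}$ is a smooth minimal submanifold near the origin by the removable-singularity property once $C^1$-ness is known, together with $\rho_r(0)=0$ giving a horizontal tangent plane there. Thus the graph is entire and analytic.

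Finally I would identify the tangent cone at infinity. The point is that $\varphi(t)\to\varphi_0$ as $t\to+\infty$, i.e. $\rho(r)/r\to\varphi_0$ as $r\to+\infty$. Consequently, under the rescalings $\rho_d(r):=\tfrac1d\rho(dr)$, which all give minimal graphs by scale invariance (as noted in the Introduction), one has $\rho_d(r)/r=\rho(dr)/(dr)\to\varphi_0$ uniformly on compact subsets of $(0,\infty)$ as $d\to\infty$, so $F_{f,\rho_d}\to F_{f,\varphi_0\,\mathrm{id}}=F_{f,\th}$ (recall $\varphi_0=\tan\th$ by comparing \eqref{th} with \eqref{varphi0}), locally uniformly away from the origin and in fact in $C^1_{\mathrm{loc}}$ using the $\rho_r$ asymptotics. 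Since $C_{f,\th}$ is the graph of $F_{f,\th}$, and the blow-downs of the minimal graph of $F_{f,\rho}$ converge to $C_{f,\th}$, the monotonicity formula for the area of minimal graphs (or the general theory of tangent cones at infinity for area-minimizing / minimal currents, cf. \cite{fe,c-m}) identifies $C_{f,\th}$ as \emph{the} tangent cone at infinity, which is moreover unique here since the limit exists. The main obstacle is not any single deep step but the careful bookkeeping: one must make sure the $O(e^{(k-1)t})$ bounds near $t=-\infty$ really do give enough decay of $\rho$ and its first (and, for analyticity via Morrey, essentially all) derivatives at $r=0$, and that the convergence to the cone at infinity is strong enough (ideally $C^1_{\mathrm{loc}}$ on $\R^{n+1}\setminus\{0\}$, then across $0$ by the uniform Lipschitz bound $|\rho_r|$ bounded, which follows from $\psi$ and $\varphi$ being bounded along the orbit) to legitimately call $C_{f,\th}$ the tangent cone at infinity rather than merely a subsequential limit.
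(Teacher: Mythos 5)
Your proposal is correct and follows essentially the same route as the paper: take the orbit from Proposition \ref{case1} or \ref{case2}, set $\rho(r)=r\varphi(\log r)$, apply Theorem \ref{min_graph2} for minimality, use the $O(r^k)$ and $O(r^{k-1})$ asymptotics to get $C^1$-regularity at the origin and then Morrey's theorem for analyticity, and read off the tangent cone at infinity from $\varphi(t)\to\varphi_0=\tan\th$. The extra care you take with positivity of $\varphi$ and with the blow-down convergence is sound but not needed beyond what the paper records.
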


\begin{proof}
Let $t\in \R\mapsto (\varphi(t),\psi(t))$ be the solution to (\ref{ODE2}) in Proposition \ref{case1}
or \ref{case2}.
Then
\begin{equation}
\rho(r)=r\cdot \varphi(\log r),\text{\ \ for \ } r\in (0,+\infty)\mapsto \rho(r)
\end{equation}
satisfies \eqref{ODE1}.
By Theorem \ref{min_graph2}, $M_{f,\rho}$
is a minimal submanifold in $\R^{n+m+2}$.
Moreover, as $r\ra 0$,
\begin{equation}
\rho(r)=r\cdot \varphi(\log r)=O(r^{k}),
\end{equation}
\begin{equation}
\rho_r=\varphi(\log r)+\psi(\log r)=O(r^{k-1}).
\end{equation}
Hence $F_{f,\rho}$ is $C^1$ at the origin
and,
thus by Theorem 6.8.1 in \cite{mo},
real analytic through the origin.

In addition, by Propositions \ref{case1}-\ref{case2}, $\varphi(t)\ra \varphi_0=\tan{\th}$ as $t\ra +\infty$.
Therefore, the LOC $C_{f,\th}$ is the unique tangent cone of the graph of $F_{f,\rho}$ at infinity.
\end{proof}

\bigskip

\subsection{Non-uniqueness and non-minimizing of minimal graphs}
The amusing spiral asymptotic behavior of the solutions in
Proposition \ref{case2}
produce the following interesting corollaries.
They explain the non-uniqueness of analytic solutions to the corresponding Dirichlet problem
and the non-minimizing property of those LOCs.

\begin{cor}\label{uni}
For an LOMSE $f$ of $(n,p,k)$-type with $(n,p)=(3,2)$, $k\geq 4$ or $(n,p)=(5,4)$, $k\geq 6$,
there exist infinitely many analytic solutions to the Dirichlet problem for boundary data $f_{\varphi_0}:=\varphi_0\cdot f$.
\end{cor}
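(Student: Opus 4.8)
The plan is to exploit the spiral asymptotic behavior described in Proposition \ref{case2}. The key geometric observation is that in the $r\rho$-plane, an analytic solution to the Dirichlet problem for boundary data $f_\varphi$ corresponds precisely to an intersection point of the graph of a solution $\rho$ to \eqref{ODE1} with the ray $\rho=\varphi\cdot r$ (equivalently, in the $\varphi\psi$-plane, to a point where the orbit hits the vertical line $\{\varphi=\text{const}\}$ with the additional normalization being free because of the rescaling invariance $\rho_d(r):=\tfrac1d\rho(dr)$). Since the orbit from Proposition \ref{case2} spirals into the critical point $P_1=(\varphi_0,0)$, it crosses the line $\{\varphi=\varphi_0\}$ infinitely many times — precisely at the parameter values $T_i$ where $\psi(T_i)=0$ are excluded, but in fact at the times where $\varphi(t)=\varphi_0$, which by the oscillation of $\varphi_i$ about $\varphi_0$ (the sequence $\{\varphi_{2m-1}\}$ strictly decreases to $\varphi_0$ and $\{\varphi_{2m}\}$ strictly increases to $\varphi_0$) must happen infinitely often, interleaved between consecutive $T_i$'s.

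First I would fix the solution $\gamma:t\mapsto(\varphi(t),\psi(t))$ from Proposition \ref{case2} and set $\rho_0(r):=r\,\varphi(\log r)$, which by Theorem \ref{min_graph2} satisfies \eqref{ODE1} and by the estimates $\varphi(t),\psi(t)=O(e^{(k-1)t})$ (together with $k\ge 2$, hence $k-1\ge 1$) extends analytically through the origin as in the proof of Theorem \ref{graph}; so $F_{f,\rho_0}$ is an analytic minimal graph. Next, I would identify infinitely many times $s_1<s_2<\cdots\to+\infty$ at which $\varphi(s_j)=\varphi_0$: between $T_{2m}$ (where $\varphi=\varphi_{2m}<\varphi_0$) and $T_{2m+1}$ (where $\varphi=\varphi_{2m+1}>\varphi_0$) the continuous function $\varphi$ attains the value $\varphi_0$ by the intermediate value theorem, giving at least one such crossing in each such interval, hence infinitely many. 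At each crossing $s_j$ we have $\rho_0(e^{s_j})=e^{s_j}\varphi_0$, i.e. the graph of $\rho_0$ meets the ray $\rho=\varphi_0 r$ at $r=e^{s_j}$.

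Then I would use the rescaling invariance: for $d_j:=e^{-s_j}$ (or any positive constant), $\rho_j(r):=\tfrac{1}{d_j}\rho_0(d_j r)$ again solves \eqref{ODE1}, so $F_{f,\rho_j}$ is an analytic minimal graph through the origin, and its restriction to $\mathbb{D}^{n+1}$ has boundary value $\rho_j(1)\,f=\varphi(s_j)\,f=\varphi_0\,f=f_{\varphi_0}$. Thus each $F_{f,\rho_j}|_{\mathbb{D}^{n+1}}$ is an analytic solution to the Dirichlet problem for $f_{\varphi_0}$. To conclude there are infinitely many \emph{distinct} such solutions, I would argue that the $\rho_j$ are pairwise distinct as functions on $(0,1]$: indeed $\rho_j(r)/r=\varphi(s_j+\log r)$, and since the orbit $\gamma$ is injective (it is a non-constant integral curve of the autonomous field $X$, and an integral curve converging to a critical point without being stationary cannot be periodic) while $\psi(s_j)\ne 0$ for the crossings chosen strictly between consecutive $T_i$'s, the initial slopes $\rho_j'(1)=\varphi(s_j)+\psi(s_j)=\varphi_0+\psi(s_j)$ are all different provided we select the $s_j$ so that the values $\psi(s_j)$ are distinct — which holds because $\psi$ has strict local extrema of strictly decreasing magnitude along the spiral, so one may pick one crossing per half-turn with $\psi(s_j)$ strictly decreasing in absolute value, hence all distinct and nonzero.

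The main obstacle I anticipate is the last distinctness argument: one must rule out that two of these rescaled graphs coincide as \emph{subsets} of $\mathbb{R}^{n+m+2}$, not merely that the functions $\rho_j$ differ. Two solutions $\rho_j$ and $\rho_l$ with $\rho_j\not\equiv\rho_l$ give graphs of $F_{f,\rho_j}$ and $F_{f,\rho_l}$ that genuinely differ because $F_{f,\rho}(y)=\rho(|y|)f(y/|y|)$ is determined by $\rho$ once $f$ is nonconstant (and $f$, being a spherical harmonic map of degree $k\ge 2$, is certainly nonconstant). So distinctness of the functions on a neighborhood of $r=1$ suffices, and the slope computation above delivers it. A minor additional point to check is that each $\rho_j$ is positive on $(0,1]$ so that $F_{f,\rho_j}$ is honestly of the stated graph form with the asserted boundary data; this follows since $\varphi>0$ throughout (as $\varphi+\psi>0$ and $\varphi$ emanates positively from the saddle along $V_1=(1,k-1)^T$ with $k-1>0$, and the orbit stays in $\{\varphi>0\}$ while spiraling toward $(\varphi_0,0)$ with $\varphi_0>0$).
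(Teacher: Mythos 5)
Your proposal is correct and follows essentially the same route as the paper: locate the infinitely many times at which the spiralling orbit of Proposition \ref{case2} crosses the line $\varphi=\varphi_0$, and rescale $\rho$ by the corresponding radii $d=e^{t}$ to get infinitely many analytic minimal graphs over $\mathbb{D}^{n+1}$ with boundary value $\varphi_0 f$ (note the sign slip: you want $d_j=e^{s_j}$, not $e^{-s_j}$, which is what your subsequent computation actually uses). Your explicit distinctness argument via injectivity of the orbit and the differing boundary slopes $\varphi_0+\psi(s_j)$ is a detail the paper leaves implicit and is a sound addition.
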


\begin{proof}
For the solution $t\in \R\mapsto (\varphi(t),\psi(t))$ to \eqref{ODE2} in Proposition \ref{case2},
define $\{t_i\}$ to be the increasing sequence
with
$\varphi(t_i)=\varphi_0$.
Set $d_i=e^{t_i}$ and recall
\begin{equation}\label{Fd_i}
F_{f,\rho_{d_i}}(y)=\f{1}{d_i}F_{f,\rho}(d_i\cdot y),\quad \text{ for } y\in \mathbb D^{n+1} \text{ and } i\in\Bbb{Z}^+.
\end{equation}
Since the minimality is rescaling invariant,
$\{F_{f,\rho_{d_i}}:i\in \mathbb Z^+\}$ give
infinitely many analytic solutions to the minimal surface equations,
with (see \eqref{expF})
\begin{equation}
F_{f,\rho_{d_i}}(x)=\frac{\rho(d_i)}{d_i}f(x)=\varphi(t_i)f(x)=\varphi_0 \cdot f(x),\quad \text{ for } x\in \p \mathbb D^{n+1}.
\end{equation}
Hence we accomplish the proof.
\end{proof}

\textbf{Remark.}
 Similarly, {\it for each $\varphi\in [0,\varphi_1]$, there exists at least one analytic solution to
the Dirichlet problem for $f_\varphi:=\varphi\cdot f$; and moreover, for $\varphi\in [\varphi_2,\varphi_1)$
such solutions are not unique.}

\bigskip

\begin{cor}\label{non-min}
For an LOMSE $f$ of $(n,p,k)$-type with $(n,p)=(3,2)$, $k\geq 4$ or $(n,p)=(5,4)$, $k\geq 6$,
the LOC $C_{f,\th}$ is non-minimizing.
\end{cor}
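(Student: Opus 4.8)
The plan is to argue by contradiction, playing the oscillating entire minimal graph of Proposition \ref{case2} off against the monotonicity formula for densities of stationary varifolds (see \cite{fe,c-m}).

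Suppose $C:=C_{f,\th}$ were area-minimizing. Take the solution $t\mapsto(\varphi(t),\psi(t))$ of \eqref{ODE2} furnished by Proposition \ref{case2}, put $\rho(r):=r\,\varphi(\log r)$, and let $M:=M_{f,\rho}$ be the entire minimal graph produced by Theorem \ref{graph}. As in the proof of Corollary \ref{uni}, let $\{t_i\}$ enumerate increasingly the instants with $\varphi(t_i)=\varphi_0$ and set $d_i:=e^{t_i}$, so that $t_i\to+\infty$ and $\rho(d_i)=\varphi_0 d_i$. The first thing I would verify is that along the radial direction the Euclidean distance to the origin is strictly increasing on both $M$ and $C$: on $C$ it equals $r\sqrt{1+\varphi_0^2}$, while on $M$ its square is $e^{2t}(1+\varphi^2)$, whose $t$-derivative $2e^{2t}\bigl(1+\varphi(\varphi+\psi)\bigr)$ is positive by the last item of Proposition \ref{case2}. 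Consequently, writing $R_i:=d_i\sqrt{1+\varphi_0^2}\to+\infty$, the slices $M\cap\partial B_{R_i}$ and $C\cap\partial B_{R_i}$ both reduce to $\Sigma_i:=\{(d_i x,\varphi_0 d_i f(x)):x\in S^n\}$; equivalently, $M\cap B_{R_i}$ and $C\cap B_{R_i}$ are the graphs of $F_{f,\rho}$ and of $F_{f,\th}$ over the ball $\{|y|\le d_i\}\subset\R^{n+1}$, hence two $(n+1)$-dimensional minimal currents with the very same boundary $\Sigma_i$.

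Let $\Theta_C$ be the density of $C$ at its vertex and $\Theta_M(R):=\mathcal H^{n+1}(M\cap B_R)/(\omega_{n+1}R^{n+1})$ the density ratio of $M$, $\omega_{n+1}$ being the volume of the unit $(n+1)$-ball. Since the restriction of an area-minimizing current to any ball is again area-minimizing, comparing $C\cap B_{R_i}$ with the competitor $M\cap B_{R_i}$ (same boundary $\Sigma_i$) gives $\mathcal H^{n+1}(C\cap B_{R_i})\le\mathcal H^{n+1}(M\cap B_{R_i})$, i.e. $\Theta_C\le\Theta_M(R_i)$ for all $i$. On the other hand, the monotonicity formula makes $R\mapsto\Theta_M(R)$ non-decreasing, and, the tangent cone of $M$ at infinity being exactly $C$ (Theorem \ref{graph}), one has $\Theta_M(R)\uparrow\Theta_C$ and hence $\Theta_M(R)\le\Theta_C$ for every $R$. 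Combining, $\Theta_M(R)\equiv\Theta_C$ on $[R_1,+\infty)$, so the rigidity (equality) case of the monotonicity formula forces $M$ to be a cone with vertex the origin in $\{|y|>R_1\}$. But $M\cap\{|y|>R_1\}$ is the graph of $y\mapsto\rho(|y|)\,f(y/|y|)$ over $\{|y|>d_1\}\subset\R^{n+1}$, which is conical exactly when $\rho$ is linear there, that is $\varphi\equiv\varphi_0$ and $\psi\equiv0$ on $(\log d_1,+\infty)$; this contradicts the oscillation in Proposition \ref{case2}, where $\psi<0$ on the intervals $(T_{2m-1},T_{2m})$ that march off to $+\infty$. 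Therefore $C_{f,\th}$ is not area-minimizing.

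The delicate part is the geometric bookkeeping in the second paragraph — checking that $M$ and $C$ really meet exactly along the round slices $\Sigma_i$, so that $M\cap B_{R_i}$ is a legitimate competitor for the minimizing current $C\cap B_{R_i}$ — together with a clean appeal to the equality case of the monotonicity formula for stationary integral currents. The oscillation provided by Proposition \ref{case2} is what is needed in two ways at once: it produces infinitely many crossing radii $R_i\to+\infty$ and simultaneously obstructs $M$ from being conical near infinity.
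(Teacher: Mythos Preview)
Your argument is correct and follows essentially the same route as the paper: both use the entire minimal graph $M$ of Theorem \ref{graph}, the crossing radii $R_i=d_i\sqrt{1+\varphi_0^2}$ at which $M$ and $C_{f,\th}$ share the slice $\Sigma_i$, the monotonicity formula for densities, and its rigidity case. The only cosmetic difference is packaging: the paper argues directly that $\Theta_M(R_1)<\Theta_0$ (since equality along the sequence would force $M$ to be a cone) and hence the rescaled graph $M_1$ beats the truncated cone with the same boundary, whereas you run the contrapositive as a contradiction. Your explicit verification that $r\mapsto r^2+\rho(r)^2$ is strictly increasing (via $\varphi+\psi>0$ from Proposition \ref{case2}) is a point the paper passes over tacitly, and it is indeed needed to identify $M\cap B_{R_i}$ with the graph over $\{|y|\le d_i\}$.
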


\begin{proof}
Let $M$ be the graph of $F_{f,\rho}$.
Then the density function of $M$ 'centered at the origin'
is
$\Th: \R^+\ra \R$ by
\begin{equation}\label{density1}
\Th(R)=\f{\text{Vol}\big(M\cap \Bbb{D}^{n+m+2}(R)\big)}{\om_{n+1}R^{n+1}},
\end{equation}
where $\om_{n+1}$ denotes the volume of the unit ball in $\R^{n+1}$.

Denote by $M_i$ the graph of $F_{f,\rho_{d_i}}$ in \eqref{Fd_i}
and $\Th_i:=
\Th\Big(\sqrt{d_i^2+\rho(d_i)^2}\Big)$.
Then
\begin{equation}
\Th_i=
\f{d_i^{n+1}\text{Vol}(M_i)}{\om_{n+1}\big(\sqrt{d_i^2+\rho(d_i)^2}\big)^{n+1}}
=\f{\text{Vol}\left(M_i\right)}{\om_{n+1}\big(\sqrt{1+\tan^2\th}\big)^{n+1}}.
\end{equation}
By the monotonicity theorem for minimal submnaifolds (see e.g. \cite{c-m,fe}),
these quantities increasingly approach the density
$\Th_0$
of $C_{f,\rho}$ $-$ the tangent cone of $M$ at infinity,
i.e.,
\begin{equation}
\Th_1\leq \cdots\leq \Th_k\cdots \ra \ \Th_0=\f{\text{Vol}\Big(C_{f,\th}\cap \Bbb{D}^{n+m+2}\big(\sqrt{1+\tan^2\th}\big)\Big)}{\om_{n+1}\big(\sqrt{1+\tan^2\th}\big)^{n+1}}.
\end{equation}
If
$\Th_1=\cdots=\Th_0$, then
$M$ must be a cone, which is not the case.
So $\Th_1<\Th_0$
and
$$
\text{Vol}\big(M_1\big)
<
\text{Vol}\Big(C_{f,\th}\cap \Bbb{D}^{n+m+2}\big(\sqrt{1+\tan^2\th}\big)\Big).
$$
Since
$$
\p\big(M_1\big)
=
\p \Big(C_{f,\th}\cap \Bbb{D}^{n+m+2}\big(\sqrt{1+\tan^2\th}\big)\Big),
$$
it follows consequently that
$C_{f,\th}$ is not area-minimizing.
\end{proof}

\bigskip\bigskip

\Section{Appendix}{Appendix}

\subsection{Proof of Lemma \ref{lem2}}\label{App1}
For $\phi:(\bar{N},\bar g)\rightarrow (N,g)$ and $x\in \text{Im}(\phi)\subset N$, the fiber $\phi^{-1}(x)$ over $x$ is a compact 
submanifold of $\bar{N}$ with finitely many connected components. This follows from the constant rank theorem (see e.g. \S II.7 of \cite{bo})
and the compactness of $\bar{N}$.
Let $P$ be the set of connected components of all fibers of $\phi$. More precisely, for $\bar{x}\in \phi^{-1}(x)$, denote by
$[\bar{x}]$ the connected component
of $\phi^{-1}(x)$ containing $\bar{x}$, then
\begin{equation}
P=\{[\bar{x}]:\bar{x}\in \bar{N}\}.
\end{equation}
Define
\begin{equation}
\pi(\bar{x})=[\bar{x}]\ \text{\ \ and\ \ \ } i([\bar{x}])=x.
\end{equation}
Then each fiber of $\pi$ is connected, and $\phi=i\circ \pi$.

Let $\bar{d}$ and $d$ be the intrinsic distance functions on $(\bar{N},\bar{g})$ and $(N,g)$, respectively, and $d_H$ be the \textit{Hausdorff distance function} (see e.g. \S 9.1 of \cite{fa}) on $P$, i.e.
\begin{equation}\label{dh1}
d_H([\bar{x}_0],[\bar{y}_0])=\max\{\sup_{\bar{x}\in [\bar{x}_0]}\inf_{\bar{y}\in [\bar{y}_0]}\bar{d}(\bar{x},\bar{y}),
\sup_{\bar{y}\in [\bar{y}_0]}\inf_{\bar{x}\in [\bar{x}_0]}\bar{d}(\bar{y},\bar{x})\}<+\infty.
\end{equation}
Then $(P,d_H)$ is a metric space equipped with the induced metric topology.

Given $[\bar{x}_0],[\bar{y}_0]\in P$, where the representatives $\bar{x}_0$ and $\bar{y}_0$ are chosen so that
$$\bar{d}(\bar{x}_0,\bar{y}_0)=\bar{d}([\bar{x}_0],[\bar{y}_0]):=\inf\{\bar{d}(\bar{x},\bar{y}):\bar{x}\in [\bar{x}_0],\bar{y}\in [\bar{y}_0]\},$$
let $\bar{\xi}:[0,1]\ra \bar{N}$ be a shortest
geodesic from $\bar{x}_0$ to $\bar{y}_0$ and $\xi:=\phi\circ \bar{\xi}$.
Due to the assumption on singular values, $(\phi_*)_{\bar{x}}: \left((\ker(\phi_*)_{\bar{x}})^\bot,\bar{g}\right)\subset (T_{\bar{x}} \bar{N},\bar{g})\ra (T_{x}N,g)$
is an isometric embedding for each $\bar{x}\in \bar{N}$. Then for each $\bar{x}\in [\bar{x}_0]$, there exists
a unique
 smooth curve $\bar{\xi}_{\bar{x}}:[0,1]\ra \bar{N}$, such that $\phi\circ \bar{\xi}_{\bar{x}}=\xi$, $\bar{\xi}_{\bar{x}}(0)=\bar{x}$ and $\bar{\xi}_{\bar{x}}'(t)$ is orthogonal to the fiber of $\phi$ going through $\bar{\xi}_{\bar{x}}(t)$. Denote
 \begin{equation}
 \Phi(\bar{x})=\bar{\xi}_{\bar{x}}(1).
 \end{equation}
 Noting that $\bar{\xi}_{\bar{x}}$ smoothly dependents on $\bar{x}$ and $\text{Length}(\bar{\xi}_{\bar{x}})=\text{Length}
 (\xi)=\text{Length}(\bar{\xi})$, we conclude that:
\begin{enumerate}
\item[(A)] $\Phi$ is a diffeomorphism between $[\bar{x}_0]$ and $[\bar{y}_0]$;
\item[(B)] $\bar{d}(\bar{x},\Phi(\bar{x}))=\bar{d}([\bar{x}_0],[\bar{y}_0])=d_H([\bar{x}_0],[\bar{y}_0])$ for each $\bar{x}\in [\bar{x}_0]$.
\end{enumerate}

Due to the compactness of $\bar{N}$, applying the constant rank theorem implies the existence of a positive constant $\de$, such that:
\begin{itemize}
\item[($\star$)] For each $\bar{x}\in \bar{N}$ with $x:=\phi(\bar{x})$, $\phi(\td{B}_\de(\bar{x}))$ is a $p$-dimensional embedded submanifold of $N$, and $\td{B}_\de(\bar{x})\cap \phi^{-1}(x)\subset [\bar{x}]$, where $\td{B}_r(\bar{x})$ is the geodesic ball centered at $\bar{x}$ and of radius $r$.
\end{itemize}

Denote by $B_r([\bar{x}])\subset P$ the metric ball centered at $[\bar{x}]$ and of radius $r$. Based on (A)-(B), we can derive the following
results through a contradiction argument:
\begin{enumerate}
\item [(C)] $i|_{B_{\de/2}([\bar{x}])}$ is injective;
\item [(D)] $i(B_{\de/2}([\bar{x}]))=\phi(\td{B}_{\de/2}(\bar{x}))$ is a $p$-dimensional embedded submanifold of $N$.
\end{enumerate}
 Therefore, we can easily endow $P$ with
a differential structure, so that both $i$ and $\pi$ are smooth maps. Moreover, letting $h:=i^*g$ implies that $\pi$ is a Riemannian submersion from $(\bar{N},\bar{g})$
 onto $(P,h)$ and $i$ is an isometric immersion from $(P,h)$ into $(N,g)$. It is worth noting that $d_H$
 is just the intrinsic distance function on $(P,h)$. This completes the proof of Lemma \ref{lem2}.

 \bigskip

\subsection{Proof of Corollary \ref{cor2}}\label{App2}

Suppose $f:S^n\ra S^m$ is an LOM with singular values $\la_1,\cdots,\la_n$ at $x\in S^n$.
Let
$\{\ep_1,\cdots,\ep_n\}$ and $\{e_1,\cdots,e_n\}$ be corresponding S-bases of $(T_x S^n,g_n)$ and $(T_x S^n,g)$, respectively.
Set
\begin{equation}\label{basis4}
E_j=(I_{f,\th})_* e_j=\f{(\cos\th \ep_j,\sin\th f_* \ep_j)}{\sqrt{\cos^2\th+\sin^2\th \la_j^2}},\qquad \forall 1\leq j\leq n.
\end{equation}
Then $\{E_1,\cdots,E_n\}$ forms an orthonormal basis of $T_{I_{f,\th}(x)} M_{f,\th}$, and
$$*(\nu_1\w \cdots\w \nu_{m+1})=\mathbf{X}\w E_1\w \cdots\w E_n$$
where $*$ is the {\it Hodge star} operator and $\{\nu_1,\cdots,\nu_{m+1}\}$ is an oriented orthonormal basis of the normal plane $N_{I_{f,\th}(x)} M_{f,\th}$.

Let $\{\ep_{n+2},\cdots,\ep_{n+m+2}\}$ be an oriented orthonormal basis of $Q_0:=\{x_1=\cdots=x_{n+1}=0\}$
and
$\a$ the angle between $Q_0$ and $N_{I_{f,\th}(x)} M_{f,\th}$.
Then
\begin{equation}
\aligned
\cos\a&=\lan \nu_1\w \cdots\w \nu_{m+1},\ep_{n+2}\w\cdots\w \ep_{n+m+2}\ran\\
&=\lan *(\nu_1\w \cdots\w \nu_{m+1}), *(\ep_{n+2}\w\cdots\w \ep_{n+m+2})\ran\\
&=\lan \mathbf{X}\w E_1\w \cdots \w E_n,\mathbf{Y}_1\w \ep_1\w \cdots\w \ep_n\ran\\
&=\left|\begin{array}{cccc}
\lan \mathbf{X},\mathbf Y_1\ran & \lan \mathbf{X},\ep_1\ran & \cdots & \lan\mathbf{X},\ep_n\ran\\
\lan E_1,\mathbf Y_1\ran & \lan E_1,\ep_1\ran & \cdots & \lan E_1,\ep_n\ran\\
& \cdots & &\\
\lan E_n,\mathbf Y_1\ran & \lan E_n,\ep_1\ran & \cdots & \lan E_n,\ep_n\ran
\end{array}\right|\\
&=\cos\th \prod_{j=1}^n \f{\cos\th}{\sqrt{\cos^2\th +\sin^2\th \la_j^2}}
\endaligned
\end{equation}
By applying Theorem \ref{npk}, we obtain \eqref{angle}.

Note that on $(S^n,g)$
the volume form
\begin{equation}\label{dV}
dV=\sqrt{\det\big(g(\ep_j,\ep_k)\big)}\ \ep^*_1\w\cdots\w \ep^*_n=\prod_{j=1}^n \sqrt{\cos^2\th+\sin^2\th \la_j^2}\ \ep^*_1\w \cdots\w \ep^*_n.
\end{equation}
By integration over $S^n$, the fomula \eqref{volume} follows.

It is easy to see that, at $y=tI_{f,\theta}(x)$ for $t>0$,
$X, E_1,\cdots, E_n$ are precisely the angle directions (see \cite{wo} for definition) of $T_yC_{f,\theta}$ relative to $Q_0^\perp$,
with Jordan angles
\begin{equation}
\th_0=\th\quad \text{and}\quad \th_i=\arccos\left(\f{\cos\th}{\cos^2\th+\sin^2\th\la_i^2}\right)\quad \forall 1\leq i\leq n.
\end{equation}
As in \cite{x-y}\cite{j-x-y3}, the slope function of $C_{f,\th}$ is thereby
\begin{equation}
W=\prod_{j=0}^n \sec\th_j=\sec\th \prod_{j=1}^n \f{\sqrt{\cos^2\th +\sin^2\th \la_j^2}}{\cos\th}.
\end{equation}

 \bigskip

\subsection{Proof of Proposition \ref{case1}}\label{App3}

Let $D$ be the bounded closed domain on the $\varphi\psi$-plane enclosed by the line segment from $(0,0)$ to $(\varphi_0,0)$
and the graph of function $h:[0,\varphi_0]\ra \R$ given by
\begin{equation}
h(\varphi)=\f{\big(\f{(\la^2-1)p}{1+\la^2\varphi^2}-(n-p)\big)\varphi}{c(n-p)},
\end{equation}
where $c\in (0,1]$ is a constant to be chosen.
$$\includegraphics[scale=0.8]{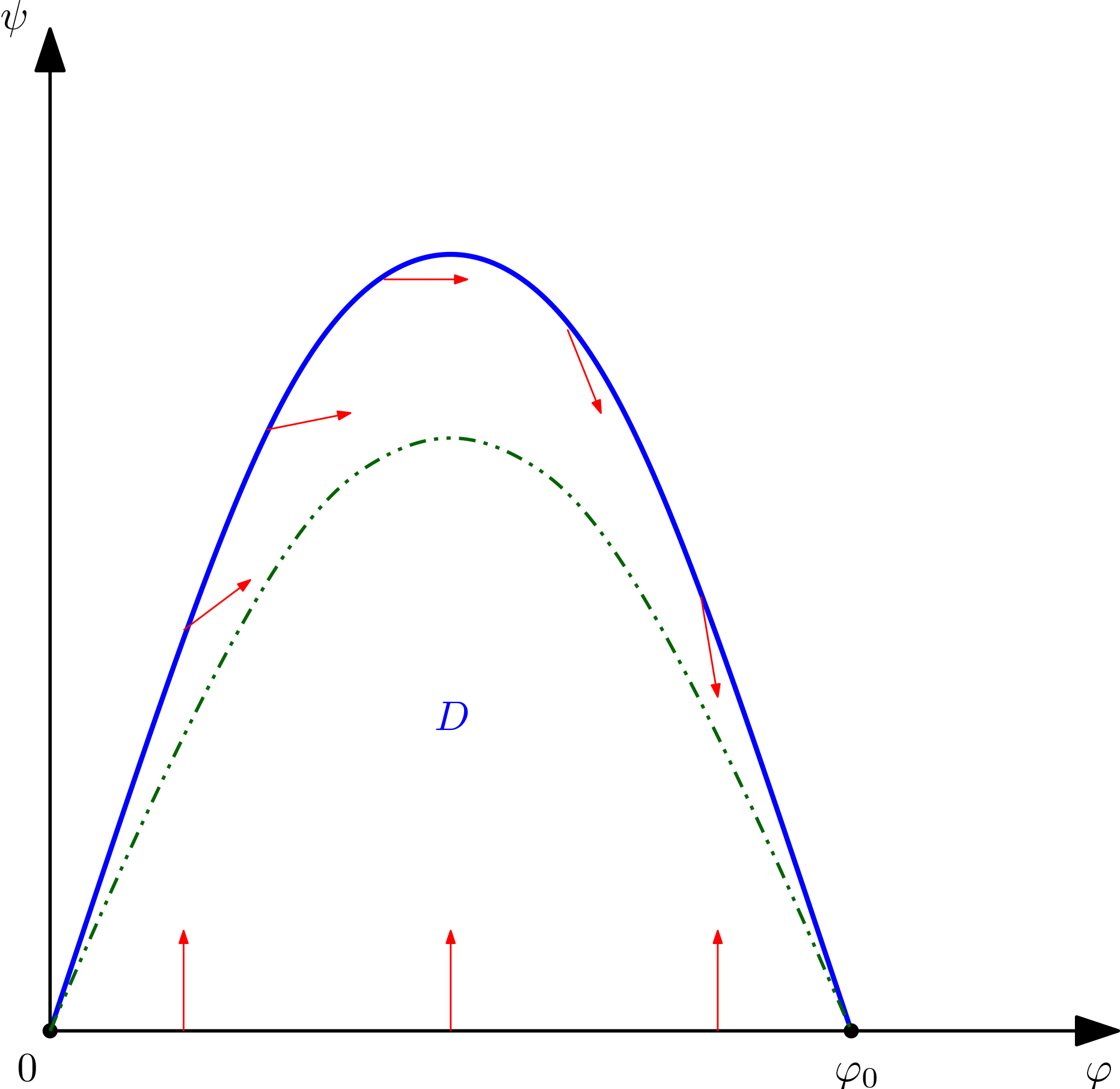}$$

 We shall prove that
$D$ is invariant under the forward development of (\ref{ODE2}) by verifying that $X=(X_1,X_2)$ points inward in $\p D$
except at the zero points $(0,0)$ and $(\varphi_0,0)$. In other words, we need to show:
\begin{enumerate}
\item[(A)] $X_2(\varphi,0)>0$ for $\varphi\in (0,\varphi_0)$;
\item[(B)] $h'(\varphi)> \f{X_2}{X_1}(\varphi,h(\varphi))$ for $\varphi\in (0,\varphi_0)$.
\end{enumerate}

Here (A) is obvious and (B) requires following careful calculations.


Set
\begin{equation}\label{f12}
\aligned
f_1(\varphi)&:=\f{(\la^2-1)p}{1+\la^2\varphi^2}-(n-p),\\
f_2(\varphi)&:=n-p+\f{p}{1+\la^2\varphi^2},
\endaligned
\end{equation}
and
\begin{equation}\label{h}
h(\varphi)=\f{ f_1(\varphi)\varphi}{c(n-p)}.
\end{equation}
Then
\begin{equation}\label{dh}
h'(\varphi)=\f{f_1(\varphi)+ f'_1(\varphi)\varphi}{c(n-p)}.
\end{equation}
Due to (\ref{X12}), (\ref{f12}), (\ref{h}) and (\ref{dh}), (B) is equivalent to
\begin{equation}\label{neq}\aligned
0&< h'(\varphi)+1+ \Big[n-p+\f{p}{1+\la^2\varphi^2}+\big(n-p+\f{(1-\la^2)p}{1+\la^2\varphi^2}\big)\f{\varphi}{h(\varphi)}\Big]\big[1+(\varphi+h(\varphi))^2\big]\\
&=\left(1+\f{f_1(\varphi)}{c(n-p)}\right)+\f{ f'_1(\varphi)\varphi}{c(n-p)}+(f_2(\varphi)-c(n-p))\left[1+\varphi^2\big(1+\f{f_1(\varphi)}{c(n-p)}\big)^2\right]\\
:&=I+II+III\cdot IV.
\endaligned
\end{equation}

By (\ref{varphi0}),
\begin{equation}
\la^2\varphi_0^2=\f{\la^2 p-n}{n-p},\qquad 1+\la^2\varphi_0^2=\f{(\la^2-1)p}{n-p}.
\end{equation}
Set
\begin{equation}
s:=\f{1+\la^2\varphi_0^2}{1+\la^2\varphi^2}-1.
\end{equation}
Then $\varphi\in (0,\varphi_0)$ implies $s\in (0,\la^2\varphi_0^2)=(0,\f{\la^2 p-n}{n-p})$, and
\begin{equation}\aligned
\f{1}{1+\la^2\varphi^2}&=\f{1+s}{1+\la^2\varphi_0^2}=\f{n-p}{(\la^2-1)p}(1+s),\\
\la^2\varphi^2&=\f{1+\la^2\varphi_0^2}{1+s}-1=\f{\la^2\varphi_0^2-s}{1+s}=\f{\f{\la^2 p-n}{n-p}-s}{1+s}.
\endaligned
\end{equation}
It immediately follows that
\begin{equation}
f_1(\varphi)=(n-p)s,
\end{equation}
\begin{equation}
f_2(\varphi)=\f{(n-p)(\la^2+s)}{\la^2-1}
\end{equation}
and
\begin{equation}\aligned
 f'_1(\varphi)\varphi&=-\f{2(\la^2-1)p\la^2\varphi^2}{(1+\la^2\varphi^2)^2}\\
&=-2(\la^2-1)p\big(1-\f{1}{1+\la^2\varphi^2}\big)\f{1}{1+\la^2\varphi^2}\\
&=-\f{2(n-p)}{(\la^2-1)p}(\la^2 p-n-(n-p)s)(1+s).
\endaligned
\end{equation}
Therefore
\begin{equation}
I=1+\f{s}{c}:=I(s),
\end{equation}
\begin{equation}
II=-\f{2(n-p)}{c(\la^2-1)p}\left(\f{\la^2 p-n}{n-p}-s\right)(1+s):=II(s),
\end{equation}
\begin{equation}
III=\f{n-p}{\la^2-1}(\la^2-c(\la^2-1)+s):=III(s)
\end{equation}
and
\begin{equation}\aligned
IV&=1+\varphi^2(1+\f{f_1(\varphi)}{c(n-p)})^2\\
&=1+\la^{-2}(\la^2\varphi^2)(1+\f{f_1(\varphi)}{c(n-p)})(1+\f{f_1(\varphi)}{c(n-p)})\\
&=1+\la^{-2}(\f{\la^2 p-n}{n-p}-s)\f{1+\f{s}{c}}{1+s}(1+\f{s}{c})\\
&\geq 1+\la^{-2}(\f{\la^2 p-n}{n-p}-s)(1+\f{s}{c})\\
:&=IV(s).
\endaligned
\end{equation}
Let
\begin{equation}
F(s):=I(s)+II(s)+III(s)\cdot IV(s).
\end{equation}
By $c\in(0,1]$ and $s>0$, $III(s)>0$.
So $I+II+III\cdot IV\geq F(s)$ for $s\in (0,\la^2\varphi_0^2)$, i.e., $\varphi\in (0,\varphi_0)$.
Observe that $F(s)$
is a cubic polynomial in $s$ and the coefficient of the third order term is $-\f{n-p}{c\la^2(\la^2-1)}<0$. Hence $F(s)=F(0)+sG(s)$, where $G(s)$ is a quadratic polynomial whose graph is a parabola opening downward.
 This implies $G(s)\geq \min\{G(0),G(\la^2\varphi_0^2)\}$ for $s\in (0,\la^2\varphi_0^2)$. Therefore, for (\ref{neq}), it suffices to show
\begin{itemize}
\item $F(0)\geq 0$;
\item $G(0)> 0$;
\item $G(\la^2\varphi_0^2)> 0$.
\end{itemize}

A straightforward calculation shows
\begin{equation}
\aligned
F(0)&=I(0)+II(0)+III(0)\cdot IV(0)\\
&=1-\f{2(\la^2 p-n)}{c(\la^2-1)p}+(n-p)(\f{\la^2}{\la^2-1}-c)(1+\f{\la^2p-n}{\la^2(n-p)})\\
&=1+n-\f{2(\la^2p-n)}{c(\la^2-1)p}-\f{c(\la^2-1)n}{\la^2},
\endaligned
\end{equation}
\begin{equation}\label{G0}
\aligned
G(0)=&F'(0)=I'(0)+II'(0)+III'(0)\cdot IV(0)+III(0)\cdot IV'(0)\\
=&\f{1}{c}-\f{2(n-p)}{c(\la^2-1)p}(\f{\la^2 p-n}{n-p}-1)+\f{n-p}{\la^2-1}(1+\f{\la^2 p-n}{\la^2 (n-p)})\\
&+(n-p)(\f{\la^2}{\la^2-1}-c)\la^{-2}(\f{\la^2 p-n}{c(n-p)}-1)\\
=&(\f{1}{c}-1)p-\f{1}{c}+\f{(\f{4-p}{c}-p)(n-p)}{\la^2p -p}+\f{(c+2)n-cp}{\la^2},\\
\endaligned
\end{equation}
\begin{equation}
\aligned
F(\la^2\varphi_0^2)&=I(\la^2\varphi_0^2)+II(\la^2\varphi_0^2)+III(\la^2\varphi_0^2)\cdot IV(\la^2\varphi_0^2)\\
&=1+\f{\la^2 p-n}{c(n-p)}+\f{n-p}{\la^2-1}(\la^2-c(\la^2-1)+\f{\la^2p-n}{n-p})\\
&=1+n+\f{\la^2 p-n}{c(n-p)}-c(n-p)
\endaligned
\end{equation}
and
\begin{equation}
\aligned
G(\la^2\varphi_0^2)&=\f{F(\la^2\varphi_0^2)-F(0)}{\la^2\varphi_0^2}\\
&=\f{1}{c}+\f{c(n-p)}{\la^2}+\f{2(n-p)}{c(\la^2-1)p}>0.
\endaligned
\end{equation}

Recalling $\la^2=\frac{k(k+n-1)}{p}$, we choose $c$ according to the values of $(n,p,k)$:

\textbf{Case 1.} $(n,p,k)=(3,2,2)$.

Using $c=1$, we have
$$F(0)=4-\f{5}{3c}-\f{9c}{4}=\f{1}{12}>0$$
and
$$G(0)=\f{4}{3c}-\f{5}{6}+\f{c}{4}=\f{3}{4}>0.$$

\textbf{Case 2.} $(n,p,k)=(5,4,2)$.

With $c=1$,
$$F(0)=6-\f{7}{4c}-\f{10c}{3}=\f{11}{12}>0$$
and
$$G(0)=\f{3}{c}-\f{7}{6}+\f{c}{3}=\f{13}{6}>0.$$

\textbf{Case 3.} $(n,p,k)=(5,4,4)$.

For $c=\f{6}{7}$,
$$F(0)=6-\f{27}{14c}-\f{35c}{8}=0$$
and
$$G(0)=\f{3}{c}-\f{81}{28}+\f{c}{8}=\f{5}{7}>0.$$

\textbf{Case 4.} $n\geq 7$.

In this case, Theorem \ref{npk2} asserts $p<n<2p$ and $p\geq 4$.

Take $c=\f{1}{2}$. By $n>p$,
$$\aligned
F(0)&=1+n-\f{2(\la^2p-n)}{c(\la^2-1)p}-\f{c(\la^2-1)n}{\la^2}\\
&\geq 1+n-\f{2}{c}-cn=\f{n}{2}-3>0.
\endaligned$$
From $2p>n$ we have
\begin{equation*}\aligned
&(\la^2 p-p)-3(n-p)=k(k+n-1)-p-3(n-p)\\
\geq & 2(n+1)-p-3(n-p)=2p-n+2>0,
\endaligned
\end{equation*}
i.e., $\f{n-p}{\la^2 p-p}<\f{1}{3}$. Hence
\begin{equation*}\aligned
G(0)&=p-2+\f{(8-3p)(n-p)}{\la^2 p-p}+\f{5n-p}{2\la^2}\\
&>p-2+\f{8-3p}{3}>0
\endaligned
\end{equation*}

Therefore we establish (B) that $D$ is invariant under the forward development of (\ref{ODE2}).
Since $(0,0)$ is a saddle critical point, there exists a smooth solution $t\in (-\infty,T_\infty)\mapsto (\varphi(t),\psi(t))\in \R^2$ to (\ref{ODE2}), with
$\lim\limits_{t\ra -\infty}(\varphi(t),\psi(t))=(0,0)$. Here
$T_\infty\in \R\cup \{+\infty\}$ such that $(-\infty,T_\infty)$ is the maximal existence interval of this solution. Moreover, by Theorem 3.5 in \S VIII of \cite{h},
as $t\ra -\infty$,
$\varphi(t)=O(e^{\mu_1 t})$, $\psi(t)=O(e^{\mu_1 t})$ and the direction of $(\varphi(t),\psi(t))^T$
converges to that of $V_1$, i.e., an eigenvector of $A$ associated to $\mu_1$ (see (\ref{A}), (\ref{la12}) and (\ref{V12})).
 It is easy to check that $h'(0)>\mu_1$.
 Thus the orbit of this solution remains in $D$ and $T_\infty=+\infty$.
 By (A), we know $\varphi'(t)=\psi(t)>0$.
 Hence the $\om$-limit set of the orbit must be a critical point, not a limit cycle, as $t$ tends to positive infinity.
Now we complete the proof.
\bigskip

\subsection{Proof of Proposition \ref{case2}}\label{App4}

The proof relies heavily on the following lemma.

\begin{lem}\label{lem}
For $(n,p)=(3,2)$, $k\geq 4$ or $(n,p)=(5,4)$, $k\geq 6$,
let $t\in [b_0,b_2]\mapsto (\varphi(t),\psi(t))$ be a smooth solution to (\ref{ODE2}) and  $b_1\in (b_0,b_2)$ so that
\begin{itemize}
\item $\varphi(b_0)\geq \sqrt{\f{3p-n-1}{3(n-p)}}$;
\item $\psi(b_0)=\psi(b_1)=\psi(b_2)=0$;
\item $\psi(t)>0$ for $t\in (b_0,b_1)$, and $\psi(t)<0$ for $t\in (b_1,b_2)$.
\end{itemize}
Then $\varphi(b_1)>\varphi_0$ and $\varphi(b_0)<\varphi(b_2)<\varphi_0$.
\end{lem}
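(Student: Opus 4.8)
The plan is to extract from \eqref{ODE2} a monotone ``scale--invariant energy'' which, on the axis $\psi=0$, is a strictly monotone function of $\varphi$, and then to compare its total descent along the ascending arc $(b_0,b_1)$ with its total ascent along the descending arc $(b_1,b_2)$.

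First I would dispose of the soft parts. Since $\varphi(b_0)>0$, examining $\frac{d}{dt}(\varphi+\psi)=\psi+\psi_t$ at a hypothetical first zero $t_\ast$ of $\varphi+\psi$ --- where, by \eqref{ODE2}, $\frac{d}{dt}(\varphi+\psi)$ equals $\frac{\lambda^2 p\,\varphi}{1+\lambda^2\varphi^2}\ge 0$, contradicting that $\varphi+\psi$ reaches $0$ from above unless the orbit is the equilibrium $(0,0)$ --- shows $\varphi>0$ and $\varphi+\psi>0$ throughout $[b_0,b_2]$. Next, at any point of the axis $\psi=0$ one has $\psi_t=-g(\varphi)(1+\varphi^2)$ with $g(\varphi):=\bigl(n-p+\frac{(1-\lambda^2)p}{1+\lambda^2\varphi^2}\bigr)\varphi$, which on $(0,\infty)$ vanishes only at $\varphi_0$, is negative on $(0,\varphi_0)$ and positive beyond. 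The change of sign of $\psi$ from $+$ to $-$ at $b_1$ forces $\psi_t(b_1)<0$, hence $g(\varphi(b_1))>0$ and $\varphi(b_1)>\varphi_0$; the change from $-$ to $+$ at $b_0$ and at $b_2$ forces $g(\varphi(b_0)),g(\varphi(b_2))<0$, hence $\varphi(b_0),\varphi(b_2)\in(0,\varphi_0)$ (non--equilibrium rules out the degenerate cases). This already gives $\varphi(b_1)>\varphi_0$ and $\varphi(b_2)<\varphi_0$.

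For the inequality $\varphi(b_0)<\varphi(b_2)$ I would introduce, with $w(\varphi):=(1+\lambda^2\varphi^2)^{p/2}$, the quantity $\widetilde E:=-\,w(\varphi)\,\dfrac{1+\varphi^2+\varphi\psi}{\sqrt{1+(\varphi+\psi)^2}}$; this is (up to the factor $e^{-(n+1)t}$) the Hamiltonian of the reduced area functional $\int e^{(n+1)t}w(\varphi)\sqrt{1+(\varphi+\psi)^2}\,dt$ whose Euler--Lagrange equation is \eqref{ODE2}, and a direct computation from \eqref{ODE2} yields $\dot{\widetilde E}=-(n+1)\,w(\varphi)\,\dfrac{\psi(\varphi+\psi)}{\sqrt{1+(\varphi+\psi)^2}}$. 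Since $\varphi+\psi>0$, $\widetilde E$ strictly decreases on $(b_0,b_1)$, say by $\Delta_1>0$, and strictly increases on $(b_1,b_2)$, say by $\Delta_2>0$. On the axis $\widetilde E=-W(\varphi)$ with $W(\varphi):=w(\varphi)\sqrt{1+\varphi^2}$ strictly increasing, so $\Delta_1-\Delta_2=W(\varphi(b_2))-W(\varphi(b_0))$ and it suffices to prove $\Delta_1>\Delta_2$.

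Here is the heart of the matter. Rewriting $\Delta_1,\Delta_2$ as integrals in the variable $\varphi$ along the two arcs ($\varphi$ is strictly monotone on each, $\psi$ having a fixed sign there), and using that the relevant $\psi$ is $>0$ on the ascending arc and $<0$ on the descending one, together with monotonicity of $s\mapsto s/\sqrt{1+s^2}$, I would get $\Delta_1>(n+1)\int_{\varphi(b_0)}^{\varphi(b_1)}w\Phi\,d\varphi$ and $\Delta_2<(n+1)\int_{\varphi(b_2)}^{\varphi(b_1)}w\Phi\,d\varphi$ with $\Phi(\varphi):=\varphi/\sqrt{1+\varphi^2}$. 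Subtracting, and letting $\Psi$ be an antiderivative of $w\Phi$, this becomes $W(\varphi(b_2))-W(\varphi(b_0))>(n+1)\bigl(\Psi(\varphi(b_2))-\Psi(\varphi(b_0))\bigr)$, i.e. $H(\varphi(b_2))>H(\varphi(b_0))$ for $H:=W-(n+1)\Psi$. Finally $H'(\varphi)=\frac{w(\varphi)\varphi}{\sqrt{1+\varphi^2}}\bigl(\frac{p\lambda^2(1+\varphi^2)}{1+\lambda^2\varphi^2}-n\bigr)$, and since $\frac{p\lambda^2(1+\varphi^2)}{1+\lambda^2\varphi^2}$ decreases from $p\lambda^2=k(k+n-1)>n$ at $\varphi=0$ to exactly $n$ at $\varphi=\varphi_0$, we have $H'>0$ on $(0,\varphi_0)$; as $\varphi(b_0),\varphi(b_2)\in(0,\varphi_0)$, this forces $\varphi(b_2)>\varphi(b_0)$. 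The hypothesis $\varphi(b_0)\ge\sqrt{(3p-n-1)/(3(n-p))}$ secures $\varphi(b_0)>0$ to start, and in an approach via explicit barrier functions (as the paper's phrasing suggests) it is what provides the margins for the estimates; in any case the genuinely delicate feature --- the reason \eqref{ODE2} is not a textbook damped oscillator --- is the factor $1+(\varphi+\psi)^2$, and taming it is exactly what the energy identity above is designed for.
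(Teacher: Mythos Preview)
Your argument is correct and genuinely different from the paper's. The paper proceeds by a direct phase--plane comparison: it reflects the descending arc via $(\td\varphi,\td\psi)=(\varphi,-\psi)$, shows $\psi>\td\psi$ on $[\varphi_0,\varphi(b_1))$ by comparing the two slope fields, and then pushes the reflected arc forward under the original flow, using the algebraic inequality $X_2+Y_2<0$ (this is exactly where the threshold $\varphi\ge\sqrt{(3p-n-1)/(3(n-p))}$ enters) to trap $\varphi(b_2)$ strictly to the right of $\varphi(b_0)$. Your route instead exploits the variational origin of \eqref{ODE2}: the reduced Hamiltonian $\td E$ gives the clean identity $\dot{\td E}=-(n+1)\,w(\varphi)\,\psi(\varphi+\psi)\big/\sqrt{1+(\varphi+\psi)^2}$ (I checked it directly from \eqref{ODE2}; the bracket collapses to $n+1$ because $(n-p)+\frac{p}{1+\la^2\varphi^2}+\frac{\la^2 p\varphi^2}{1+\la^2\varphi^2}=n$), and the monotonicity of $s\mapsto s/\sqrt{1+s^2}$ together with $H'>0$ on $(0,\varphi_0)$ finishes the job. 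What your approach buys is a stronger and cleaner statement: it needs only $\varphi(b_0)>0$, not the quantitative lower bound, and it uses nothing about the specific values of $(n,p,k)$ beyond $p<n$ and $\la>1$. What the paper's approach buys is that it stays entirely within elementary phase--plane analysis, with no appeal to the Lagrangian structure; the price is the ad hoc algebraic estimate and the extra hypothesis on $\varphi(b_0)$, which in the end is harmless since in the application (Proposition~\ref{case2}, Step~5) one has $\varphi(b_0)\ge\tfrac{4}{5}\varphi_0$ anyway.
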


\textbf{Remark. }By this lemma, there are no limit cycles of (\ref{ODE2}) on the region $\varphi\geq\sqrt{\f{3p-n-1}{3(n-p)}}$.

\begin{proof}
Using symbols in Appendix \ref{App3},
we have from (\ref{ODE2}) that
\begin{equation}\label{psit}
\psi_t=-\psi-(f_2(\varphi)\psi-f_1(\varphi)\varphi)\big[1+(\varphi+\psi)^2\big].
\end{equation}
By assumptions, $\varphi(b_1)\neq \varphi_0$ and
$0\geq \psi'(b_1)=f_1(\varphi(b_1))\varphi(b_1)\big(1+\varphi(b_1)^2\big)$.
So $\varphi(b_1)>\varphi_0$ and $\psi'(b_1)<0$.
Similarly $\varphi(b_0),\varphi(b_2)<\varphi_0$.

For $t\in(b_1,b_2)$, with
\begin{equation}\label{td}
\left\{\begin{array}{ll}
\td{\varphi}=\varphi\\
\td{\psi}=-\psi
\end{array}\right.
\end{equation}
(\ref{ODE2}) becomes
\begin{equation}\label{forY}
\left\{\begin{array}{ll}
\td{\varphi}_t=-\td{\psi}\\
\td{\psi}_t=-\td{\psi}-\big(f_2(\td{\varphi})\td{\psi}+f_1(\td{\varphi})\td{\varphi}\big)\big[1+(\td{\varphi}-\td{\psi})^2\big]
\end{array}\right.
\end{equation}
By the monotonicity, $\psi$ for $t\in(b_0,b_1)$ and $\td{\psi}$ for $t\in(b_1,b_2)$
can be written as smooth functions $\psi(\varphi)$ and $\td{\psi}(\varphi)$ respectively.
Then
we have
\begin{equation}
\dfrac{d\psi}{d\varphi}=-1-\left[f_2(\varphi)-f_1(\varphi)\dfrac{\varphi}{\psi}\right]\left[1+\left(\varphi+\psi\right)^2\right],
\end{equation}
and
\begin{equation}
\dfrac{d\td{\psi}}{d\varphi}=\,\,1+\left[f_2(\varphi)+f_1(\varphi)\dfrac{\varphi}{\td{\psi}}\right]\left[1+\left(\varphi-\td{\psi}\right)^2\right].
\end{equation}
Therefore
\begin{equation}
\dfrac{d\psi}{d\varphi}-\dfrac{d\td{\psi}}{d\varphi}<
f_1(\varphi)
\left\{
\dfrac{\varphi}{\psi}\left[1+\left(\varphi+\psi\right)^2\right]-
\dfrac{\varphi}{\td{\psi}}\left[1+\left(\varphi-\td{\psi}\right)^2\right]
\right\}.
\end{equation}
Note that $\psi-\td{\psi}$ is continuous on $[\varphi_0,\varphi(b_1)]$ with value zero at $\varphi(b_1)$.
Through a contradiction argument, we have $\psi>\td{\psi}$ on $[\varphi_0,\varphi(b_1))$.
$$\includegraphics[scale=0.8]{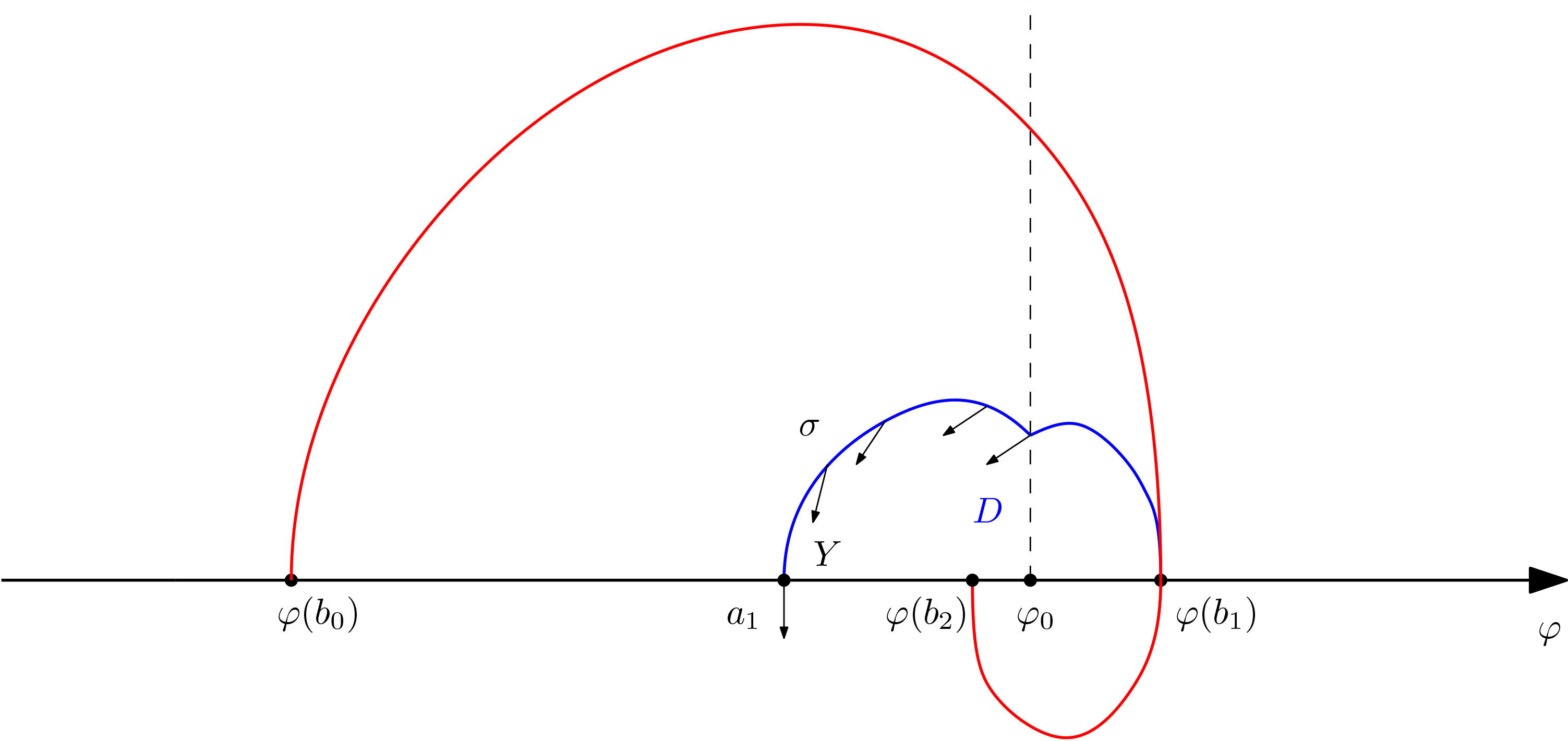}$$

Let $\sigma$ be the orbit of the backward solution to \eqref{ODE2} from $(\varphi_0, \td{\psi}(\varphi_0))$ to $(a_1,0)$ for some $a_1>\varphi(b_0)$.
Based on \eqref{forY}, set
\begin{equation}\label{Y12}
\aligned
Y_1(\varphi,\psi)&=-\psi,\\
Y_2(\varphi,\psi)&=-\psi-(f_2(\varphi)\psi+f_1(\varphi)\varphi)\big[1+(\varphi-\psi)^2\big].
\endaligned
\end{equation}
If we have
\begin{itemize}
\item [$(\star):$]
$\left|\begin{array}{cc} X_1 & X_2 \\ Y_1 & Y_2\end{array}\right|<0$ at $(\varphi,\psi)$ when $\varphi\geq \sqrt{\f{3p-n-1}{3(n-p)}}$
and $\psi>0$,
\end{itemize}
then the inequality holds for each point of $\sigma$.
Hence the region $D$ embraced by $\sigma$, the $\varphi$-axis and the striaight line $\varphi=\varphi_0$
forms an invariant set under the forward development of \eqref{forY}.
Therefore $\varphi(b_0)<a_1<\varphi(b_2)<\varphi_0$.

Since $X_1=-Y_1=\psi>0$,
$(\star)$ equals to saying that $Y_2+X_2<0$, which is obtained through careful calculations as follows.
$$\aligned
Y_2+X_2=&-\psi-(f_2(\varphi)\psi+f_1(\varphi)\varphi)\big[(1+(\varphi-\psi)^2\big]\\
&-\psi-(f_2(\varphi)\psi-f_1(\varphi)\varphi)\big[1+(\varphi+\psi)^2\big]\\
=&-2\psi-2f_2(\varphi)\psi(1+\varphi^2+\psi^2)+4f_1(\varphi)\varphi^2\psi\\
\leq & 2\psi(-1-f_2(\varphi)(1+\varphi^2)+2f_1(\varphi)\varphi^2)\\
=&\f{2\psi}{1+\la^2\varphi^2}\Big[-1-\la^2\varphi^2-(1+\varphi^2)\big((n-p)(1+\la^2\varphi^2)+p\big)\\
&\qquad\quad+2\varphi^2\big((\la^2-1)p-(n-p)(1+\la^2\varphi^2)\big)\Big]\\
=&\f{2\psi}{1+\la^2\varphi^2}\left[-3\la^2(n-p)\varphi^4+\big(\la^2(3p-n-1)-3n\big)\varphi^2-n-1\right]\\
<&-\f{6\la^2(n-p)\psi}{1+\la^2\varphi^2}\varphi^2(\varphi^2-\f{3p-n-1}{3(n-p)})\leq 0.
\endaligned$$
Now the proof of the lemma gets complete.
\end{proof}



As in Appendix \ref{App3}, there exists a smooth solution $t\in(-\infty,T_\infty)\mapsto (\varphi(t),\psi(t))$ to (\ref{ODE2}), with $\lim\limits_{t\ra -\infty}(\varphi(t),\psi(t))=(0,0)$, $\varphi(t)=O(e^{\mu_1 t})$,
$\psi(t)=O(e^{\mu_1 t})$ and the direction of $(\varphi(t),\psi(t))^T$ convergent to that of $V_1$ as $t\ra -\infty$.
We shall accomplish the proof of Proposition
\ref{case2} in the following steps.

\textbf{Step 1. }\textit{Show the existence of $t_1\in (-\infty,T_\infty)\subset \R$, such that $\psi(t)>0$ for all $t\in (-\infty,t_1]$ with}
\begin{equation}\label{step1}
\varphi(t_1)=\varphi_0\quad \text{and }\quad \psi(t_1)\leq \f{1}{5}\varphi_0.
\end{equation}

Define $g:[0,\varphi_0]\ra \R$ by
\begin{equation}
g(\varphi)=(2f_1(\varphi)+\f{1}{5})\varphi.
\end{equation}
Let $D$ be the domain enclosed by
the graph of $g$, the $\varphi$-axis and the line $\varphi=\varphi_0$.

$$\includegraphics[scale=0.8]{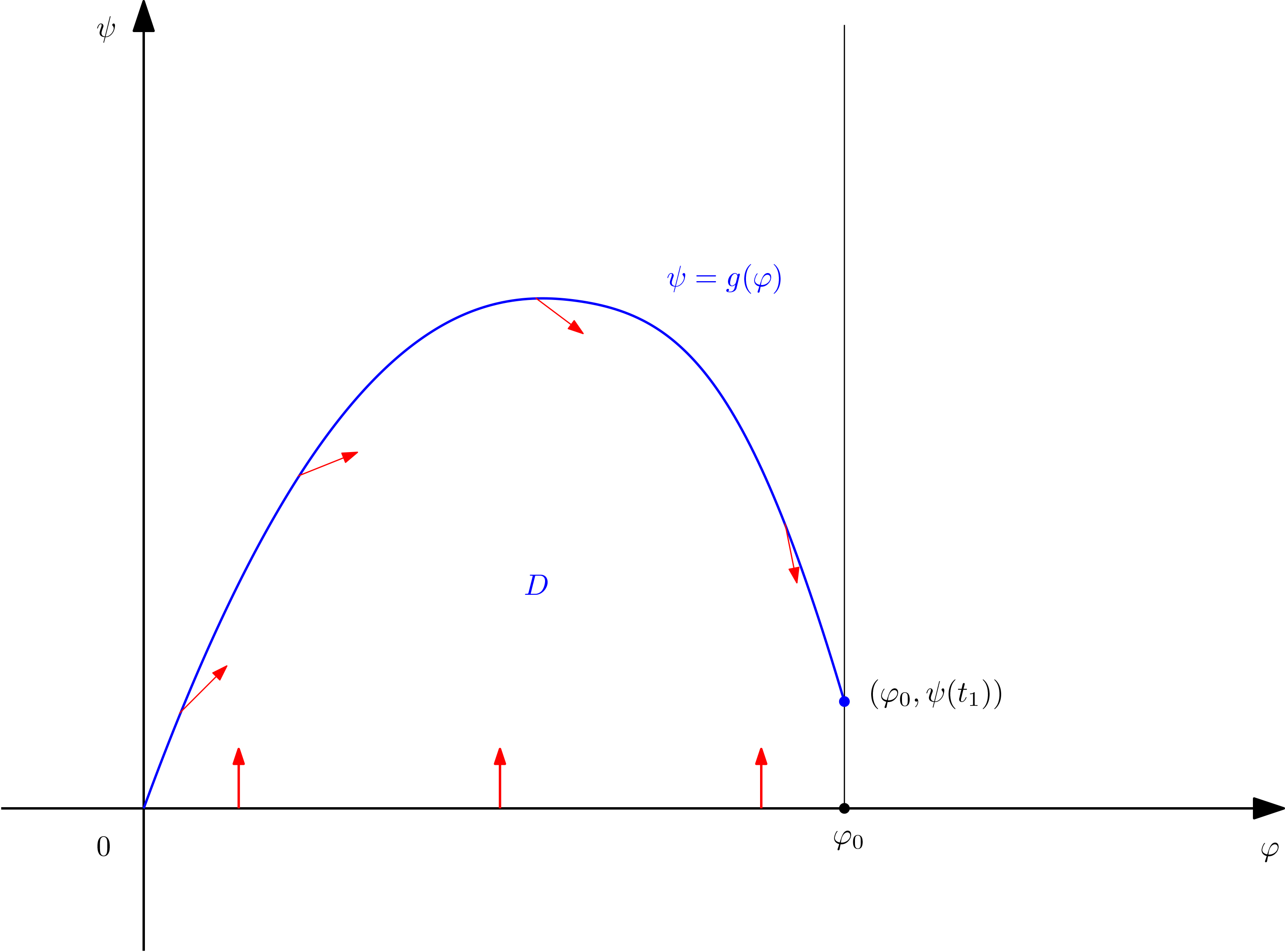}$$
We claim that  the vector field $X$ points inward on $\partial D-\{\varphi=\varphi_0\}$.
Namely,
\begin{enumerate}
\item[(A)] $X_2(\varphi,0)>0$ for each $\varphi\in (0,\varphi_0)$;
\item[(B)] $g'(\varphi)> \f{X_2}{X_1}(\varphi,g(\varphi))$ for any $\varphi\in (0,\varphi_0)$.
\end{enumerate}
Here (A) is trivial and (B) is equivalent to
\begin{equation}
\aligned
0<& g'(\varphi)+1+\left(f_2(\varphi)-\f{\varphi f_1(\varphi)}{g(\varphi)}\right)\big[1+(\varphi+g(\varphi))^2\big]\\
=&\left(\f{6}{5}+2f_1(\varphi)\right)-\Big(-2 f'_1(\varphi)\varphi\Big)+\left(f_2(\varphi)-\f{f_1(\varphi)}{2f_1(\varphi)+\f{1}{5}}\right)\left[1+\varphi^2(\f{6}{5}+2f_1(\varphi))^2\right]\\
:=&I-II+III\cdot IV.
\endaligned
\end{equation}
As in \S \ref{App3}, we use
$$s:=\f{1+\la^2\varphi_0^2}{1+\la^2\varphi^2}-1\quad \text{and} \quad s\in (0,\la^2\varphi_0^2).$$
Similarly, we have (now $n-p=1$ in our cases)
\begin{equation}
\aligned
I&=\f{6}{5}+2s,\\
II&=\f{4}{(\la^2-1)p}(\la^2p-n-s)(1+s),\\
III&=\f{\la^2+s}{\la^2-1}-\f{s}{2s+\f{1}{5}},\\
IV&=1+\f{\la^2p-n-s}{\la^2(1+s)}(\f{6}{5}+2s)^2=1+\f{(\la^2-1)p}{4\la^2}\left(\f{\f{6}{5}+2s}{1+s}\right)^2\cdot II.
\endaligned
\end{equation}
Therefore
\begin{equation}\label{es1}\aligned
&I-II+III\cdot IV\\
=&I+III+\left[\f{(\la^2-1)p}{4\la^2}\left(\f{\f{6}{5}+2s}{1+s}\right)^2\cdot III-1\right]\cdot II\\
\geq &I+III+\left[\f{p}{4}\left(\f{\f{6}{5}+2s}{1+s}\right)^2\left(1-\f{s}{2s+\f{1}{5}}\right)-1\right]\cdot II
\endaligned
\end{equation}
Set
\begin{equation}
F(s):=\left(\f{\f{6}{5}+2s}{1+s}\right)^2\left(1-\f{s}{2s+\f{1}{5}}\right)=\f{4}{25}\left(\f{3+5s}{1+s}\right)^2\f{1+5s}{1+10s}.
\end{equation}
Then
$$\log F=\log \f{4}{25}+2\log(3+5s)-2\log(1+s)+\log(1+5s)-\log(1+10s)$$
and
$$\aligned
\f{d\log F}{ds}&=\f{10}{3+5s}-\f{2}{1+s}+\f{5}{1+5s}-\f{10}{1+10s}\\
&=\f{-11+20s+175s^2}{(3+5s)(1+s)(1+5s)(1+10s)}.
\endaligned$$
 Hence $F'(s)=0(>0,<0)$ if and only if $s=\f{1}{5}(>\f{1}{5},<\f{1}{5})$, and
\begin{equation}\label{F}
\min_{s\in (0,\infty)}F=F(\f{1}{5})=\f{32}{27}.
\end{equation}

For $(n,p)=(5,4)$, substituting \eqref{F} into \eqref{es1} leads to
\begin{equation}
I-II+III\cdot IV\geq I+III+\f{5}{27}II> 0.
\end{equation}

For $(n,p)=(3,2)$, it then produces
\begin{equation}
\aligned
&I-II+III\cdot IV\geq I+III-\f{11}{27}II\\
=&\f{6}{5}+2s+\f{\la^2+s}{\la^2-1}-\f{s}{2s+\f{1}{5}}-\f{22}{27(\la^2-1)}(2\la^2-3-s)(1+s)\\
\geq& \f{6}{5}+2s+1-\f{s}{2s+\f{1}{5}}-\f{44}{27}(1+s)\\
=&\f{19}{270}+\f{10}{27}s+\f{1}{20s+2}> 0.
\endaligned
\end{equation}

Hence (B) holds for both cases.

Since $g'(0)>\mu_1$, the solution develops in $D$ until it hits
the border line $\varphi=\varphi_0$ at $t_1\in \R$
or it approaches $(\varphi_0,0)$ as $t\rightarrow +\infty$.
Due to the fact that $(\varphi_0,0)$ is a spiral point,
the latter cannot occur and moreover $t_1<+\infty$, $\varphi(t_1)=\varphi_0$ and $\psi(t_1)>0$.

\textbf{Step 2.} 
Before $\psi(t)$ reaches zero, we have $\varphi_t=\psi>0$, $\varphi>\varphi_0$ (after $t_1$)
and $f_1(\varphi)> 0$.
Consequently,
\begin{equation}
(\varphi+\psi)_t=-(f_2(\varphi)\psi-f_1(\varphi)\varphi)\big[1+(\varphi+\psi)^2\big]\leq 0.
\end{equation}
Hence the solution intersects the $\varphi$-axis for the first time when $t$ equals some $T_1\in \R$,
with $\varphi_0<\varphi_1:=\varphi(T_1)\leq \varphi_0+\psi(t_1)\leq \frac{6}{5}\varphi_0$.

\textbf{Step 3.}
At $t=T_1$, $\psi_t<0$.
So the solution dipps into the lower half plane and similarly cannot limits to $(\varphi_0,0)$.
By the argument in the proof of Lemma \ref{lem}, the solution extends forward to the $\varphi$-axis again (after $T_1$) when $t$ equals some $T_2\in \R$.
Mark $t_2\in(T_1,T_2)$ for $\varphi(t_2)=\varphi_0$.
 When $\psi<0$ and $\varphi\leq\varphi_0$, we have $(\varphi+\psi)_t\geq 0$.
 Therefore, $\varphi_0>\varphi_2:=\varphi(T_2)>\varphi_0+\psi(t_2)\geq \varphi_0-\psi(t_1)\geq \frac{4}{5}\varphi_0$.

\textbf{Step 4.}
By induction,
we obtain
$\{T_i:i\in \Bbb{Z}^+\}$
and $\varphi_i:=\varphi(T_i)$
with properties:
\begin{itemize}
\item $\psi(T_i)=0$ for each $i\in \Bbb{Z}^+$;
\item $\{\varphi_{2m-1}:m\in \Bbb{Z}^+\}$ is a strictly decreasing sequence in $(\varphi_0,\frac{6}{5}\varphi_0]$,\\ $\{\varphi_{2m}:m\in \Bbb{Z}^+\}$
is a strictly increasing sequence in $[\f{4}{5}\varphi_0,\varphi_0)$;
\item $\psi(t)>0$ in $(-\infty,T_1)\cup\left(\bigcup\limits_{m\in \Bbb{Z}^+}(T_{2m},T_{2m+1})\right)$;
\item $\psi(t)<0$ in $\bigcup\limits_{m\in \Bbb{Z}^+}(T_{2m-1},T_{2m})$.
\end{itemize}

\textbf{Step 5.}
Assume $a:=\lim_{m\ra \infty}\varphi(T_{2m})<\varphi_0$.
Then there would be a limit cycle for \eqref{ODE2} through $(a,0)$.
But $a>\f{4}{5}\varphi_0>\sqrt{\f{3p-n-1}{3(n-p)}}$.
It leads to a contradiction to the nonexistence of limit cycles in Lemma \ref{lem}.
Therefore, $\lim\limits_{m\ra \infty}\varphi(T_{2m})=\lim\limits_{m\ra \infty}\varphi(T_{2m-1})=\varphi_0$.

Since the solution cannot attain $(\varphi_0,0)$ in a finite time,
it is now clear that $T_i\rightarrow +\infty$.
This
completes the proof of Proposition \ref{case2}.


\bibliographystyle{amsplain}

\end{document}